%% file: main.tex
\documentclass[hidelinks,onefignum,onetabnum,final]{siamart251216}

\input{paper_shared}

\ifpdf
\hypersetup{
  pdftitle={Unconditionally Long-Time Stable Variable-Step Second-Order ETD Schemes for the 2D Periodic Incompressible NSE},
  pdfauthor={H. Wang, X. Wang, and M. Zhang}
}
\fi

\begin{document}

\maketitle

\begin{abstract}
We develop an efficient, unconditionally stable, variable-step second-order exponential time-differencing (ETD) scheme for the 2D incompressible Navier–Stokes equations with periodic boundary conditions, together with an embedded adaptive time-stepping variant. The scheme is unconditionally uniform-in-time stable in the sense that the numerical vorticity admits a time-uniform bound in $L^\infty(0,\infty; L^2)$ whenever the external forcing term is uniformly bounded in time in $L^2$, for all Reynolds numbers and time step sizes.

In the periodic vorticity formulation, each step requires two diagonal heat-type solves in Fourier space and one scalar cubic solve. Beyond the standard ETD framework, the proposed scheme incorporates two recently developed ingredients: (i) a concurrent-correction auxiliary variable (ccSAV), which is essential for achieving second-order temporal accuracy, and (ii) a mean-reverting scalar auxiliary variable (mr-SAV) multistep formulation, which plays a central role in ensuring long-time stability.

The proposed methods overcome key limitations of existing approaches for the Navier–Stokes equations: classical Runge–Kutta schemes generally lack provable long-time stability, while IMEX and SAV-based BDF methods typically do not admit unconditional stability guarantees in the variable-step setting. Numerical experiments in two spatial dimensions confirm second-order temporal accuracy and uniform long-time stability, and demonstrate that the embedded adaptive scheme provides a practical local error indicator and automatic time-step selection at moderate Reynolds number. 
The numerical results also show that long integrations are essential in intermittent regimes: simulations over only $\mathcal{O}(10)$ eddy-turnover times may miss transitions between quiescent and bursting states. The present work therefore provides both a stability mechanism and a practical adaptive strategy for reliable long-time simulation of forced incompressible flows.
\end{abstract}

\begin{keywords}
incompressible Navier-Stokes equation, exponential time-differencing (ETD), mean-reverting concurrent-correction scalar auxiliary variable (mr-ccSAV), unconditional stability, long-time stability, second-order time marching, variable step, time-adaptive scheme
\end{keywords}

\begin{MSCcodes}
65M12, 65L04, 65L05, 76D05
\end{MSCcodes}
\section{Introduction}
The incompressible Navier--Stokes equations (NSE) constitute a fundamental mathematical model for viscous incompressible fluid flows and arise in a wide range of scientific and engineering applications, including geophysical flows, atmospheric and oceanic dynamics, turbulence, and climate science. \ignore{In this paper, we consider the NSE
\begin{subequations}\label{eqn:vp_nse}
\begin{align}
    &\frac{\partial \bm u}{\partial t} + \bm u \cdot \nabla \bm u = \nu \Delta \bm u - \nabla p + \bm f, \\
    &\nabla \cdot \bm u = 0,
\end{align}
\end{subequations}
posed on a rectangular domain $\Omega \subset \mathbb{R}^d$ ($d=2,3$) equipped with periodic boundary conditions. To ensure uniqueness and stability, we impose zero-mean conditions on the velocity field $\bm u=(u_1,\dots,u_d)$, the pressure $p$, and the external forcing $\bm f$. Here, $\nu>0$ denotes the kinematic viscosity, and $\bm f$ represents an external body force that is assumed to be uniformly bounded in time.
The periodic boundary condition enables us to eliminate the pressure by applying the classical Leray--Hopf projection, which can be calculated explicitly and efficiently in Fourier space.}{In this work, we focus on the two-dimensional case.} The 2D periodic NSE can equivalently be written in the vorticity--streamfunction formulation

\begin{equation}\label{eqn:nse_2d}
\begin{aligned}
    \omega_t + \nabla^\bot \psi \cdot \nabla \omega &= \nu \Delta \omega + f , \\
    -\Delta \psi &= \omega,  
\end{aligned}
\end{equation}
where $\omega$\ignore{$\omega=\nabla\times\bm u=\partial_yu_1-\partial_x u_2$} denotes the scalar vorticity, $\psi$ is the streamfunction,  and $f$ is the forcing term in this formulation and is related to the body force $\mathbf{f}$ in the velocity-pressure formulation via $f=(\nabla\times\bm f)\cdot\mathbf{k}=\partial_xf_2-\partial_yf_1$, where $\mathbf{k}$ is the unit vector perpendicular to the plane. The incompressibility constraint on the velocity $\mathbf{u}=\nabla^\perp\psi=(\partial_y\psi, -\partial_x\psi)$ is automatically satisfied, and the pressure variable is eliminated, making this formulation particularly attractive for numerical simulations in periodic domains.

A fundamental qualitative property of the NSE is the \emph{uniform-in-time boundedness} of solutions in the energy norm. In particular, when the external forcing is uniformly bounded in $L^2$, the velocity field admits a global-in-time $L^2$ bound, independent of whether the flow exhibits laminar, chaotic, or turbulent behavior \cite{frisch1995turbulence,constantin1988navier,foias2001navier,majda2006nonlinear,temam2012infinite}. 
In two dimensions, an analogous uniform-in-time bound holds for the enstrophy—the $L^2$ norm of the vorticity field \cite{constantin1988navier, temam2012infinite}.
These uniform estimates not only underpin the mathematical foundation of modern fluid dynamics—guaranteeing the existence of absorbing sets, global attractors, and invariant measures—but they also impose a strict physical mandate on numerical algorithms. 

Accurate numerical approximation of the NSE over long time intervals is essential in many applications, particularly when one is interested in the statistical \emph{climate} of the model, such as long-time statistical quantities, time-averaged observables, or asymptotic regimes, rather than short-time transient dynamics. For such purposes, it is highly desirable that numerical schemes inherit the uniform-in-time boundedness of the continuous system, that is, they are \emph{long-time structure preserving}. Numerical methods that fail to maintain this fundamental stability property may exhibit artificial energy growth, spurious numerical instabilities, or incorrect long-time statistics, even if they are formally consistent and stable over finite time intervals.

Beyond long-time stability, higher-order time integration schemes are also highly desirable, as they allow relatively large time steps while maintaining a prescribed error tolerance. Moreover, variable time-stepping strategies are essential for enhancing computational efficiency, enabling the use of larger time steps during slow temporal evolution and smaller steps when rapid dynamics are present.

The goal of this work is to develop and analyze numerical algorithms for the incompressible Navier--Stokes equations that enjoy the following three properties: (i) uniform-in-time bounds on the solution for all positive time under variable time stepping; (ii) second-order accuracy in time under variable step sizes; (iii) embedded adaptive time-stepping that combines long-time stability and automatic time-step control.

\medskip
\noindent
\textbf{Long-time stability and numerical discretizations.}
Early numerical investigations of long-time stability for the NSE focused primarily on fully implicit schemes. Under additional assumptions on the continuous solution, long-time stability results were established for classical methods such as the backward Euler and Crank--Nicolson schemes \cite{heywood1990finite,lubich1996runge,tone2006long,tone2007long}. To reduce computational cost while retaining stability, significant effort has been devoted to linearly implicit and semi-implicit methods. Representative results include unconditional stability and dissipativity analyses for semi-implicit Euler and projection-type schemes \cite{geveci1989convergence,simo1994unconditional,ju2002global}, as well as unconditional long-time stability results for higher-order schemes such as BDF2 \cite{heister2017unconditional}. Further developments can be found in \cite{hill2000approximation,rebholz2023long,contri2025error,garcia2025error}. Despite their favorable stability properties, these approaches typically require solving non-symmetric linear systems with variable coefficients at each time step, which can be computationally expensive.

To improve efficiency, it is natural to treat the nonlinear advection term explicitly while discretizing the viscous term implicitly. This strategy allows reuse of the same linear operator at each time step. For the two-dimensional NSE in vorticity form, long-time stability and statistical convergence results were established for first- and second-order IMEX schemes in \cite{gottlieb2012long,wang2012efficient}, with higher-order extensions in \cite{cheng2016long}. However, the stability of these IMEX schemes typically relies on time-step restrictions depending on the Reynolds number, which can severely limit their effectiveness. 

\medskip
\noindent
\textbf{Exponential time-differencing and SAV methodologies.}
Exponential time-differencing (ETD) methods provide an appealing alternative for stiff nonlinear systems by treating the linear part exactly and incorporating the nonlinear term via Duhamel's principle. ETD schemes offer high-order temporal accuracy and strong stability properties and can be efficiently implemented using FFT-based techniques in periodic domains. They have been applied to a wide class of semilinear parabolic equations \cite{du2004stability,du2005analysis,ju2018energy,du2021maximum,wang2025novel} as well as some nonlinear equations with mild nonlinearities \cite{chen2021energy,fu2022energy,fu2025higher,chen2025arxiv}. The last paper addresses the variable-step case. 
Recently, significant progress has been made in developing high-order ETD schemes that rigorously preserve energy dissipation laws and maximum bound principles (MBP) for various phase-field models \cite{du2019_nonlocal_ac_etd, ju2021_ifrk_mbp, ma2023_gl_etd, tang2016_ac_mbp, duan2024_bfs_etd}. Furthermore, exponential integrators have been successfully tailored for capturing the dynamics of nonlinear Schrödinger and wave equations \cite{bao2013_nls_ewi, bao2012_kgz_ewi, doi:10.1137/23M1615656}. Nevertheless, the application of ETD methods to the incompressible NSE is nontrivial. 
While several ETD-based approaches have been proposed \cite{rogallo1977illiac,zhang1987schema,kooij2018exponential,li2018fast,li2020adaptive}, rigorous results on their long-time stability remain unavailable.

In parallel, the scalar auxiliary variable (SAV) methodology, first developed by Shen, Xu, and Yang \cite{shen2018scalar} for gradient flows, and its variants have emerged as powerful tools for constructing energy-stable schemes for various dissipative systems \cite{shen2018scalar,shen2019new,akrivis2019energy,lin2019numerical,lin2020energy}. The versatility of the SAV approach is reflected in its numerous recent extensions, including the multiple SAV (MSAV) \cite{doi:10.1137/18M1166961}, generalized SAV \cite{ju2022_gsav_exp}, and pseudo-spectral SAV schemes \cite{ANTOINE2021110328}. These methods, often combined with high-order Runge--Kutta or Discontinuous Galerkin discretizations \cite{tang2022_rksav_dg, zheng2023_chemotaxis_sav}, provide robust stability for complex dissipative systems. However, most existing high-order energy-decreasing schemes \cite{ju2022_stabilized_esav, duan2024_bfs_etd} are primarily designed for gradient flows. Their application to the NSE under nontrivial external forcing—where the goal is to preserve uniform-in-time $L^\infty$ bounds—remains a significant open problem. A different extension that utilizes the energy conservation property of the nonlinear term in concurrent time-marching of the SAV and the velocity/vorticity field, developed by Yang \cite{yang2021new} under the name of ZEC, and by Li, Shen, and Liu \cite{LiShenLiu2022SAVPressure}, has achieved notable success for the NSE and related systems \cite{huang2023stability,huang2025stability,di2023variable,ji2024unified,zeng2023fully,zhang2024unified,zhang2025diffuse, han2025highly}. 
Recently, one of the authors and collaborators incorporated a linear mean-reverting term into this concurrent SAV-velocity/vorticity marching approach and developed a second-order in time scheme (mr-SAV) that guarantees uniform-in-time bounds under nontrivial forcing \cite{han2025highly,coleman2024efficient}. This scheme is highly efficient, requiring only two Stokes solves per time step. However, a long-time stable variable-step extension remains elusive. While variable-step strategies based on BDF or multi-step ETD formulations have been successfully analyzed with rigorous stability frameworks for phase-field models \cite{doi:10.1137/20M1384105, hou2023_pfc_bdf2_sav, hou2023_tfmb_sav} and general dissipative systems \cite{liao2021moc}, as well as the unforced NSE model \cite{di2023variable}, we are not aware of any higher-order variable-step schemes that preserve uniform-in-time energy bounds under general forcing in the literature so far.

\medskip
\noindent
\textbf{Contributions of this work.}
We develop efficient high-order time-stepping schemes for the incompressible NSE by integrating mean-reverting SAV \cite{coleman2024efficient,han2025highly} with the concurrent-correction SAV (ccSAV)  mechanism of Hou and Qiao \cite{hou2023implicit}, and classical exponential time-differencing multistep methods. The proposed schemes treat the nonlinear advection term explicitly through extrapolation from past fluid states, while the auxiliary variable is determined from a scalar algebraic equation coupled to the new vorticity update.

The main contributions are summarized as follows:
\begin{itemize}
    \item An unconditionally stable variable-step second-order ETD-mr-ccSAV scheme under periodic boundary conditions.
    \item Rigorous unconditional uniform-in-time $L^\infty(0,\infty;L^2)$ bounds under variable step sizes without time-step restriction.
    \item Explicit treatment of nonlinear advection while preserving long-time stability.
    \item An embedded adaptive scheme combining long-time stability with a practical local error indicator and automatic time-step selection.
\end{itemize}

To the best of our knowledge, this is the first second-order-in-time, variable-step scheme for the two-dimensional periodic NSE with general bounded forcing that treats the nonlinear advection explicitly and admits a rigorous uniform-in-time $L^2$
 stability bound without a time-step restriction.
 The emphasis of this paper is on the construction and unconditional long-time stability of the variable-step method. The numerical results verify the expected temporal order and demonstrate the practical relevance of the stability mechanism.

The remainder of the paper is organized as follows. Section~\ref{sec:2} introduces the reformulation and functional setting. Section~\ref{sec:3} presents the numerical algorithm. Section~\ref{sec:4} establishes unconditional long-time stability. Numerical experiments are reported in Section~\ref{sec:5}. Concluding remarks are given in Section~\ref{sec:6}.

\section{Preliminaries and mean-reverting ccSAV reformulation of the NSE}\label{sec:2}

In this section, we introduce the function spaces and an extended system for the incompressible Navier--Stokes equations, which form the foundation for our construction of efficient and unconditionally long-time stable numerical schemes.

\subsection{Function spaces}
We first recall the periodic Sobolev spaces on $\Omega=(0,2\pi)\times(0,2\pi)$ with average zero:
\[
\dot{H}_{per}^1(\Omega):=\{\phi\in H_{loc}^1(\mathbf{R}^2)\big|\int_{\Omega}\phi=0\textit{ and } \phi\textit{ is }2\pi-periodic\textit{ in each direction}\}.
\]
$\dot{H}_{per}^{-1}$ is defined as the dual space of $\dot{H}_{per}^1$, and we denote by $\langle \cdot , \cdot \rangle$ the duality pairing between $\dot{H}^1_{\mathrm{per}}$ and $H^{-1}_{\mathrm{per}}$ induced by the $L^2$ inner product, and by $\Vert \cdot \Vert$ the norm in $L^2(\Omega)$. We work in the mean-zero periodic Sobolev spaces
\ignore{\begin{equation}
    (\dot{H}_{\mathrm{per}}^{k}(\Omega))^d = \text{the closure of $d$-dimensional zero-mean trigonometric polynomials in } (H^k(\Omega))^d,
\end{equation}
and its divergence-free subspaces for $k=0,1$,
}
\begin{equation}
     H = \left\{\omega \in \dot{L}^2(\Omega) : \int_\Omega \omega\,dx = 0\right\},\quad V = \left\{\omega\in \dot{H}_{\mathrm{per}}^1(\Omega) : \int_\Omega \omega\,dx = 0 \right\}.
\end{equation}
\ignore{
These spaces are endowed with the corresponding $(H^k(\Omega))^d$ norms, denoted by $\Vert \cdot \Vert_k$.

Let $\mathcal{P}: (\dot{L}^2(\Omega))^d \rightarrow H$ be the Leray--Hopf orthogonal projection operator from $(\dot{L}^2(\Omega))^d$ onto $H$. We can recast the NSE \eqref{eqn:vp_nse} as
\begin{equation}\label{eqn:p_ns}
    \frac{\partial \bm u}{\partial t} + \nu \mathcal{L}\bm u + B(\bm u, \bm u) = \bm F,
\end{equation}
where $\mathcal{L}\bm u = - \mathcal{P}\Delta \bm u$ is the negative Laplace operator, $B(\bm u,\bm v) = \mathcal{P}(\bm u\cdot \nabla \bm v)$, and $\bm F = \mathcal{P}(\bm f)$. We point out that the Leray--Hopf projection can be calculated explicitly and efficiently in Fourier space. The following property of the nonlinear term is central to the construction of our scheme:
\[
 \langle B(\bm u, \bm v), \bm v\rangle = 0, \quad \forall \bm u, \bm v\in V.
\]

For notational simplicity, we often use $\bm u(t)$ or $\bm u$ to denote $\bm u(\bm x, t): \Omega\times \mathbb{R}^+ \mapsto \mathbb{R}^d$, where $\mathbb{R}^+ = [0,\infty)$. A similar convention is applied to other vector- and scalar-valued functions. Additionally, boldface letters are used to represent vectors and vector spaces.

In the two-dimensional case, the NSE can be equivalently reformulated into the vorticity--streamfunction formulation as presented in \eqref{eqn:nse_2d}. In this formulation, the nonlinear term $B(\varphi, \omega)=\nabla^\bot \varphi \cdot \nabla \omega$ also satisfies the conservation property
\[
 \langle B(\psi, \omega), \omega\rangle = 0, \quad \forall \omega\in V.
\]
The corresponding Sobolev spaces are $H=\dot{H}_{\mathrm{per}}^0(\Omega)$ and $V=\dot{H}_{\mathrm{per}}^1(\Omega)$.

For \eqref{eqn:nse_2d}, both the algorithm design and theoretical analysis follow an identical logic to those developed for the primitive variable formulation. For brevity, we focus on the primitive variable formulation in the sequel.
}

\subsection{Extended system with mean-reverting concurrent-correction SAV}
To facilitate the development of efficient algorithms for the NSE \eqref{eqn:nse_2d} while achieving second-order accuracy and preserving long-time stability, we introduce an extended system of the NSE.
Denote $\mathcal L = -\Delta $, $\psi=(\mathcal{L})^{-1}\omega$, $B(\varphi, \omega)=\nabla^\bot \varphi \cdot \nabla \omega$. The NSE \eqref{eqn:nse_2d} can be rewritten as 
\[\frac{\partial \omega}{\partial t} + \nu \mathcal{L} \omega + B(\psi, \omega) = f.\]
We now introduce the extended system which incorporates a dynamic scalar auxiliary variable $r(t)$ and a mean-reverting term $\gamma r$:
\begin{subequations}\label{eqn:mr_sav_v2}
\begin{align}
    &\frac{\partial \omega}{\partial t} + \nu \mathcal{L} \omega + (1 - r^k)\,B(\psi, \omega) = f, \label{eqn:mr_sav_1_v2}\\
    &\frac{\mathrm{d} r}{\mathrm{d} t} + \gamma r = -(r-1)^{k-1}{\tilde{\gamma}}\langle B(\psi, \omega), \omega \rangle. \label{eqn:mr_sav_2_v2}
\end{align}
\end{subequations}
Here\footnote{We obtain a first-order scheme when $k=1$.}  $k=2$, $\gamma>0$ is a user-specified mean-reverting parameter, $\tilde{\gamma}>0$ is a user-specified scaling parameter, 
and the nonlinear term $B(\varphi, \omega)$ satisfies the conservation property
\[
 \langle B(\varphi, \omega), \omega\rangle = 0,
\]
$ \forall \omega\in V$ and $\varphi$ satisfying $(-\Delta) \varphi\in H$.

It is straightforward to verify that
\begin{equation*}
 |r(t)| \le |r_0| e^{-\gamma t} + \frac{\tilde{\gamma}}{\gamma}(1 - e^{-\gamma t})\|(1-r)^{k-1}\|_{L^\infty(0,t)}\|\langle B(\psi,\omega), \omega \rangle\|_{L^\infty(0,t)}.
\end{equation*}
When the nonlinear term is energy conservative, that is, $\langle B(\varphi, \omega), \omega \rangle \equiv 0$, we deduce that
\begin{equation*}
r(t) = r_0 e^{-\gamma t} \ \longrightarrow\  0,\quad \text{as } t\to \infty.
\end{equation*}
Furthermore, if $r(0)=0$, then $r(t)\equiv 0$ for all $t \ge 0$.\ignore{, and we recover the original model \eqref{eqn:p_ns}.
}
Even if the initial data do not vanish, or the trilinear term is not identically zero, as may occur in numerical approximations, the mean-reverting term $\gamma r$ will drive $r(t)$ back to its desirable value $0$. This motivates the terminology \emph{mean-reverting scalar auxiliary variable} (mr-SAV).
The auxiliary variable $r$ is evolved concurrently with the vorticity $\omega$; this motivates the term ccSAV.

Moreover, if $r$ contains an error of size $\tau$, the induced error in the $\omega$ equation is of second order when $k=2$. This property is particularly useful when $r$ is approximated with only first-order accuracy. The ccSAV mechanism was utilized by Hou and Qiao for gradient flows \cite{hou2023implicit}. Related higher-order sequential correction ideas also appeared in \cite{huang2022new}.

The introduction of $r(t)$ facilitates the explicit treatment of the nonlinear term, while the damping term enhances stability. For $\gamma>0$, {$\tilde{\gamma}=1$}, and $k=1$, the long-time stability of discretizations of \eqref{eqn:mr_sav_v2} via BDF2 and Gear's extrapolation was established in \cite{han2025highly,coleman2024efficient}. 
If $\gamma=0, {\tilde{\gamma}=1}$, and $k=1$, the system reduces to the standard SAV-ZEC formulation. This approach has been extensively studied and applied to various fluid models \cite{yang2021new,yang2021novel,zhang2024unified,li2022new}. However, in this case, the error introduced by explicit treatment of nonlinear advection may accumulate over long-time simulations, and $r(t)$ may deviate significantly from $0$, even when $r(0)=0$. Consequently, simulation results may become unreliable at large times. The introduction of $\gamma$ renders \eqref{eqn:mr_sav_2_v2} dissipative and alleviates this difficulty.

\section{Algorithm}\label{sec:3}
Building upon the mr-SAV extended system presented in \eqref{eqn:mr_sav_v2}, we proceed to develop a second-order exponential time-differencing (ETD) time-stepping scheme. We also introduce an embedded adaptive time-stepping version of our scheme by leveraging an embedded first-order scheme.

Given an arbitrary terminal time $T$ and a set of non-overlapping time nodes
$0 = t_0 < t_1 < \cdots < t_N = T$ with the $k$th time step size
$\tau_k = t_k - t_{k-1}$, we allow the time partition to be non-uniform.
Let $\Psi^{n}$ denote the numerical approximation of $\Psi(t)$ at time $t_n$,
and abbreviate $\Psi(t_{n} + \frac{1}{2}\tau_{n+1})$ as $\Psi^{n + \frac{1}{2}}$.
We further introduce the extrapolation formula for $\Psi^{n + \frac{1}{2}}$, given by
\begin{equation}\label{extrapolation}
  \tilde{\Psi}^{n + \frac{1}{2}} =
  \frac{\tau_{n+1} + 2\tau_{n}}{2\tau_{n}}\Psi^{n}
  - \frac{\tau_{n+1}}{2\tau_{n}} \Psi^{n-1}.
\end{equation}

\subsection{ETD-mr-ccSAV-MS2}

To obtain a second-order accurate approximation $\omega^{n+1}$, we approximate the nonlinear terms in \eqref{eqn:mr_sav_v2} using a second-order midpoint extrapolation and set $k=2$. The resulting scheme, {in differential form},
takes the form
\begin{equation}\label{eqn:2nd_msSAV_approx}
    \left\{
    \begin{aligned}
    &\frac{\partial \omega(t)}{\partial t} + \nu \mathcal{L}\omega(t)
    + (1 - (r^{n+1})^2) {\tilde{B}^{n+\frac{1}{2}}}
    = f^{n+\frac{1}{2}},\\
    &\frac{d r(t)}{dt} + \gamma r(t)
    = (1 - r^{n+1}) \big(\varphi_1(\tau_{n+1}\gamma)\big)^{-1}{{\tilde{\gamma}}}
    \Big\langle \varphi_1(\tau_{n+1} \nu \mathcal{L}) {\tilde{B}^{n+\frac{1}{2}}}, \omega^{n+1}\Big\rangle,
    \end{aligned}
    \right.
\end{equation}
for $t\in (t_{n},t_{n+1}]$, {where $\tilde{B}^{n+\frac{1}{2}}$ is the extrapolation of $B({\psi}^n, {\omega}^n)$ and $B({\psi}^{n-1}, {\omega}^{n-1})$ via \eqref{extrapolation}, given by }
{\begin{equation}\label{extrapolation_B}
  \tilde{B}^{n + \frac{1}{2}} =
  \frac{\tau_{n+1} + 2\tau_{n}}{2\tau_{n}}B({\psi}^{n}, {\omega}^{n})
  - \frac{\tau_{n+1}}{2\tau_{n}} B({\psi}^{n-1}, {\omega}^{n-1}).
\end{equation}}
Subsequently, by applying the variation-of-constants formula to \eqref{eqn:2nd_msSAV_approx} { we arrive at the following {\bf second-order ETD mean-reverting ccSAV MS2 scheme (ETD-mr-ccSAV-MS2o)}}
\begin{subequations}\label{eqn:mrGSAV_ETDMS2}
    \begin{align}
    &\omega^{n+1} = \varphi_0(\tau_{n+1} \nu \mathcal{L})\omega^{n}
    - \tau_{n+1}\big(1 - (r^{n+1})^2\big)\varphi_1(\tau_{n+1} \nu \mathcal{L})
    {\tilde{B}^{n+\frac{1}{2}}} \label{eqn:mrGSAV_ETDMS2_u}\\
    &\qquad\qquad\qquad\qquad\qquad
    + \tau_{n+1} \varphi_1(\tau_{n+1} \nu \mathcal{L})f^{n+\frac{1}{2}},\notag \\
    &r^{n+1} = \varphi_0(\tau_{n+1} \gamma) r^{n}
    + \tau_{n+1}(1 - r^{n+1}){{\tilde{\gamma}}}
    \Big\langle \varphi_1(\tau_{n+1} \nu \mathcal{L}){\tilde{B}^{n+\frac{1}{2}}} , \omega^{n+1} \Big\rangle,\label{eqn:mrGSAV_ETDMS2_q}
    \end{align}
\end{subequations}
where {the classical exponential functions associated with ETD algorithms}, $\varphi_0(\cdot)$ and $\varphi_1(\cdot)$, are defined by
\begin{equation}\label{eqn:exp_relat}
    \varphi_0(z) = \mathrm{e}^{-z},\quad \varphi_1(z) = \frac{1 - \mathrm{e}^{-z}}{z}.
\end{equation}

The above coupled scheme can be solved efficiently by substituting \eqref{eqn:mrGSAV_ETDMS2_u} into \eqref{eqn:mrGSAV_ETDMS2_q} and solving the resulting scalar cubic algebraic equation for $r^{n+1}$ first. The procedure is as follows:
\begin{enumerate}
    \item Calculate the intermediate variables (via two heat solves):
    \begin{equation}\label{eqn:step_1}
        \begin{aligned}
        & \omega^{n+1}_1 = \varphi_0(\tau_{n+1} \nu \mathcal{L})\omega^{n}
        + \tau_{n+1} \varphi_1(\tau_{n+1} \nu \mathcal{L})f^{n+\frac{1}{2}},\\
        & \omega^{n+1}_2 = \tau_{n+1}\varphi_1(\tau_{n+1} \nu \mathcal{L}){\tilde{B}^{n+\frac{1}{2}}}.
        \end{aligned}
    \end{equation}
    \item Compute the coefficients:
    \begin{equation}\label{eqn:step_2}
        A^{n+1} = \langle \omega_1^{n+1}, \omega_2^{n+1}\rangle,\quad
        B^{n+1} = \Vert \omega_2^{n+1} \Vert^2,\quad
        C^{n+1} = \varphi_0(\tau_{n+1} \gamma)\, r^n.
    \end{equation}
    \item Determine the auxiliary variable $r^{n+1}$ by finding the real root of the cubic polynomial with the smallest magnitude\footnote{Choose the positive root in case there are two roots with the same magnitude.}
    \begin{equation}\label{eqn:cubic}
       g(r)={{\tilde{\gamma}}}B^{n+1} r^3 - {{\tilde{\gamma}}}B^{n+1} r^2 + (1 + {{\tilde{\gamma}}}A^{n+1} - {{\tilde{\gamma}}}B^{n+1}) r -({{\tilde{\gamma}}}A^{n+1} - {{\tilde{\gamma}}}B^{n+1} + C^{n+1}).
    \end{equation}
    \item Obtain $\omega^{n+1}$ from
    \begin{equation}\label{eqn:step_4}
         \omega^{n+1} = \omega_1^{n+1} - \big(1-(r^{n+1})^2\big) \omega_2^{n+1}.
    \end{equation}
\end{enumerate}
It is evident that the ETD-mr-ccSAV-MS2o scheme is computationally efficient, since only two heat solves (with the associated FFT) and one scalar cubic equation is solved per time step.

\subsection{Solvability and root selection}
We briefly discuss the solvability of the cubic algebraic equation \eqref{eqn:cubic}. When the leading coefficient vanishes, i.e., $B^{n+1}=0$, we have $\omega_2^{n+1} = 0$, which further implies $A^{n+1}=0$. In this case, \eqref{eqn:cubic} reduces to a linear polynomial with leading coefficient equal to $1$, and solvability is guaranteed. When $B^{n+1} > 0$, \eqref{eqn:cubic} is a non-degenerate cubic polynomial and hence admits at least one real root.

We next discuss unique solvability of the auxiliary variable in the small-step regime.
Using the facts that $\varphi_0(z) \approx 1$ and $\varphi_1(z) \approx 1$ when $z \ll 1$, 
and the assumption that the ratio of adjacent time steps satisfies $\tau_{n+1}/\tau_n \approx 1$ for practical applications,
we obtain $B^{n+1} = \mathcal{O}(\tau_{n+1}^{2})$, $A^{n+1} = \mathcal{O}(\tau_{n+1})$, and $C^{n+1} = \mathcal{O}(\tau_{n+1})$.
Moreover, since
$$
g'(r) = 3{{\tilde{\gamma}}}B^{n+1}r^2 - 2{{\tilde{\gamma}}}B^{n+1}r + (1 + {{\tilde{\gamma}}}A^{n+1} - {{\tilde{\gamma}}}B^{n+1}),
$$
whose minimum value is attained at $r = \frac{1}{3}$, we deduce
$g'(r) \ge g'(\frac13)= 1 +{{\tilde{\gamma}}} A^{n+1} - \frac{4}{3}{{\tilde{\gamma}}} B^{n+1} > 0$ for sufficiently small time steps. This implies that $g(r)$ is non-decreasing on $\mathbb{R}$ and hence admits a unique root.

In practical computations, Newton's method can be used to solve \eqref{eqn:cubic}. Specifically, when $\tau_{n+1}$ is sufficiently small, $g'(r) > 0$ holds. In addition,
$g''(r) = 6{{\tilde{\gamma}}}B^{n+1}r - 2{{\tilde{\gamma}}}B^{n+1} = \mathcal{O}(\tau_{n+1}^{2})$.
Therefore, Newton's method enjoys quadratic convergence with a favorable coefficient.

When the cubic equation admits multiple real roots, we select the real root with the smallest magnitude, choosing the positive one in case of a tie. This selection is consistent with the small-step regime, where the cubic has a unique root near zero. In all numerical experiments, Newton’s method initialized at the previous auxiliary value $r^n$
 converges to this same branch. The unconditional stability estimate below only uses the algebraic relation satisfied by the selected root and does not require uniqueness of the cubic root.

\subsection{Formal second-order accuracy}

We now give a heuristic argument for the second-order accuracy of $\omega$. The numerical approximation of $r^{n+1}$ in \eqref{eqn:mrGSAV_ETDMS2_q} is formally first order, since $\varphi_1(\tau_{n+1}\nu \mathcal{L}) \approx I$ when $\tau_{n+1}$ is small, leading to the inner product term
$$
\Big\langle \varphi_1(\tau_{n+1} \nu \mathcal{L}){\tilde{B}^{n+\frac{1}{2}}}, \omega^{n+1} \Big\rangle
= \mathcal{O}(\tau_{n+1}),
$$
due to the skew symmetry of the nonlinear advection term.
Nevertheless, \eqref{eqn:mrGSAV_ETDMS2_u} formally achieves second-order accuracy due to the following two observations:
\begin{enumerate}
    \item $(r^{n+1})^2 \tau_{n+1}\varphi_1(\tau_{n+1} \nu \mathcal{L}) {\tilde{B}^{n+\frac{1}{2}}} = \mathcal{O}(\tau_{n+1}^3)$.
    \item The explicit Adams--Bashforth ETD multistep scheme is second order; see \cite{zhang1987schema}.
\end{enumerate}
Indeed, note that
$\tau_{n+1} \varphi_1(\tau_{n+1} \nu \mathcal{L}) {\tilde{B}^{n+\frac{1}{2}}}
= \int_0^{\tau_{n+1}} \varphi_0((\tau_{n+1}-s)\nu\mathcal{L})\,{\tilde{B}^{n+\frac{1}{2}}}\,ds$,
and ${\tilde{B}^{n+\frac{1}{2}}}$ is formally a second-order approximation of
$B({\psi}(t_n+\frac{\tau_{n+1}}{2}), {\omega}(t_n+\frac{\tau_{n+1}}{2}))$.
Furthermore, for two smooth functions $g,h$ together with a second-order approximation of $h$ at the midpoint
$h^{1/2}=h(\tau/2)+\mathcal{O}(\tau^2)$, we have
\begin{eqnarray*}
\int_0^\tau g(s)(h(s)-h^{1/2})
&=& \int_0^\tau g(s)(h(s)-h(\tau/2)) + \int_0^\tau g(s)(h(\tau/2)-h^{1/2})
\\
&=& \int_0^\tau (g(\tau/2)+\mathcal{O}(\tau))\big(h'(\tau/2)(s-\tau/2)+\mathcal{O}(\tau^2)\big)
+ \int_0^\tau g(s)\mathcal{O}(\tau^2)
\\
&=& \mathcal{O}(\tau^3).
\end{eqnarray*}
Hence, the local truncation error is formally of order $\mathcal{O}(\tau^3)$, implying second-order accuracy in $\omega$ despite the first-order accuracy in $r$.

A rigorous convergence proof for the variable-step method is beyond the scope of the present work.

\subsection{Embedded adaptive pair}

We now present an embedded adaptive time-stepping algorithm based on the long-time stable variable-step second-order scheme \eqref{eqn:mrGSAV_ETDMS2}. This embedded adaptive scheme enables a practical local error indicator and automatic time-step selection with negligible computational overhead while maintaining long-time stability.
See \cite{Fehlberg1969NASA, DormandPrince1980JCAM} for some historical notes on the development of embedded pairs. In the context of incompressible flows and the Navier-Stokes equations, various adaptive time-stepping strategies have been extensively investigated in the literature \cite{doi:10.1137/070688018, doi:10.1137/080728032, John10}, alongside a recent interesting 1--2 pair developed in \cite{DeCariaSchneier2021CMAME} for the 2D Navier-Stokes equations (without the $L^\infty(0,\infty;L^2)$ bound).

First, we construct a first-order scheme by considering a  discretization of  \eqref{eqn:mr_sav_1_v2}--\eqref{eqn:mr_sav_2_v2} with $k=1$, which yields
\begin{subequations}\label{eqn:mrGSAV_ETDMS1}
    \begin{align}
    &\bar{\omega}^{n+1} = \varphi_0(\tau_{n+1} \nu \mathcal{L})\omega^{n}
    - \tau_{n+1} (1 - \bar{r}^{n+1}) \varphi_1(\tau_{n+1} \nu \mathcal{L})
    {\tilde{B}^{n+\frac{1}{2}}} \label{eqn:mrGSAV_ETDMS1_u}\\
    &\qquad\qquad\qquad\qquad\qquad
    + \tau_{n+1} \varphi_1(\tau_{n+1} \nu \mathcal{L})f^{n+\frac{1}{2}},\notag \\
    &\bar{r}^{n+1} = \varphi_0(\tau_{n+1} \gamma) r^{n}
    - \tau_{n+1} {{\tilde{\gamma}}}\Big\langle \varphi_1(\tau_{n+1} \nu \mathcal{L})
    {\tilde{B}^{n+\frac{1}{2}}}, \bar{\omega}^{n+1} \Big\rangle,\label{eqn:mrGSAV_ETDMS1_q}
    \end{align}
\end{subequations}
It is straightforward to verify that $\bar{r}^{n+1}$ in \eqref{eqn:mrGSAV_ETDMS1_q} is also of order $\tau_{n+1}$. Consequently, $1 - \bar{r}^{n+1}$ provides a first-order approximation of $1$, implying that $\bar{\omega}^{n+1}$ achieves only first-order accuracy. We therefore refer to \eqref{eqn:mrGSAV_ETDMS1} as the first-order exponential time-differencing mean-reverting SAV (\textbf{ETD-mr-SAV-MS1o}) scheme. The first-order companion does not use the quadratic ccSAV correction; it is introduced only as an embedded estimator sharing the same intermediate heat solves.

Due to its structural similarity to the ETD-mr-ccSAV-MS2o scheme, the intermediate values used to solve \eqref{eqn:mrGSAV_ETDMS2_u} can be reused to compute \textbf{ETD-mr-SAV-MS1o}. Specifically, the procedure is:
\begin{enumerate}
    \item Determine $\bar{r}^{n+1}$ by solving the linear relation
    \begin{equation} \label{eqn:step_5}
        \bar{r}^{n+1} = C^{n+1} - {{\tilde{\gamma}}}A^{n+1} + {{\tilde{\gamma}}}(1 - \bar{r}^{n+1})B^{n+1}.
    \end{equation}
    \item Obtain $\bar{\omega}^{n+1}$ from
    \begin{equation}\label{eqn:step_6}
         \bar{\omega}^{n+1} = \omega_1^{n+1} - (1 - \bar{r}^{n+1}) \omega_2^{n+1}.
    \end{equation}
\end{enumerate}

Combining the first- and second-order ETD-mr-SAV schemes above, we construct an embedded adaptive algorithm. Following \cite{hairer1993solving}, the adaptive strategy is summarized in {\bf Algorithm~\ref{alg:adaptive}} with the time-step update function
{
\begin{equation}\label{update_function}
A_{dp}(e_{\omega}, e_r, \tau)
= \rho \left(\min\left\{\left(\frac{\text{tol}_{\omega}}{e_{\omega}}\right)^{\frac{1}{2}},\ \frac{\text{tol}_{r}}{e_r}\right\}\right)\tau.
\end{equation}
}
Here, $\rho$ is a safety factor, and $\text{tol}_{\omega}$ and $\text{tol}_r$ are prescribed tolerances for the fluid variable $\omega$ and the scalar auxiliary variable $r$, respectively. These parameters depend on the specific problem under consideration. We refer to this adaptive time-stepping method as the \textbf{ETD-mr-ccSAV-MS12} scheme.

\begin{algorithm}[ht]
\footnotesize
\caption{ETD-mr-ccSAV-MS12 scheme}
\label{alg:adaptive}
\begin{algorithmic}
\State \textbf{Given:} $\omega^{n}, \omega^{n-1}$, $r^{n}$, $\tau_{n+1}$.
\State \textbf{Step 1.} Compute $\omega_1^{n+1}$, $\omega_2^{n+1}$ and the coefficients $A^{n+1}$, $B^{n+1}$, $C^{n+1}$ via \eqref{eqn:step_1} and \eqref{eqn:step_2}.
\State \textbf{Step 2.} Compute $r^{n+1}$ and $\bar{r}^{n+1}$ via \eqref{eqn:cubic} and \eqref{eqn:step_5}, respectively.
\State \textbf{Step 3.} Compute the second-order approximation $\omega^{n+1}$ and the first-order approximation $\bar{\omega}^{n+1}$ via \eqref{eqn:step_4} and \eqref{eqn:step_6}, respectively.
\State \textbf{Step 4.} Compute
$e_{\omega}^{n+1} = \frac{\Vert \bar{\omega}^{n+1} - \omega^{n+1} \Vert}{\max\{ \Vert \bar{\omega}^{n+1} \Vert,\ \Vert \omega^{n+1} \Vert\}}$
and $e_r^{n+1}=|r^{n+1}|$.
\If{$e_{\omega}^{n+1} \le  \text{tol}_{\omega}$ and $e_{r}^{n+1} \le  \text{tol}_r$}
    \State \textbf{Step 5.} Update
    $\tau_{n+2} \leftarrow \max\{\tau_{\min},\ \min\{A_{dp}(e_{\omega}^{n+1}, e_r^{n+1}, \tau_{n+1}),\ \tau_{\max}\}\}$.
\Else
    \State \textbf{Step 6.} Reset
    $\tau_{n+1} \leftarrow \max\{\tau_{\min},\ \min\{A_{dp}(e_{\omega}^{n+1}, e_r^{n+1}, \tau_{n+1}),\ \tau_{\max}\}\}$.
    \State \textbf{Step 7.} Go to \textbf{Step 1}.
\EndIf
\end{algorithmic}
\end{algorithm}

\begin{rem}
In this adaptive time-stepping setting, one may instead use a first-order single-step scheme obtained by replacing {{$\tilde{B}^{n+1/2}$ with $B(\psi^n, \omega^n)$}}. However, to construct an embedded adaptive algorithm without additional computational cost, we choose the first-order two-step ETD-mr-SAV scheme so that the intermediate computations can be shared with the ETD-mr-ccSAV-MS2o scheme.
\end{rem}
\ignore{
\section{Exponential time difference mr-ccSAV schemes}\label{sec:3}
Building upon the mr-ccSAV extended systems presented in \eqref{eqn:mr_sav_v2}, we proceed to develop a second-order exponential time difference (ETD) time stepping scheme. We also introduce an embedded adaptive time-stepping version of our scheme by leveraging an embedded first-order scheme.

Given an arbitrary terminal time $T$ and a set of non-overlapping time nodes $0 = t_0 < t_1 < \cdots < t_N = T$ with the $k$th time step size $\tau_k = t_k -t_{k-1}$, we specify that the time partition here can be non-uniform. Let $\Psi^{n}$ denote the numerical approximation of $\Psi(t)$ at the time $t_n$, and abbreviate $\Psi(t_{n} + \frac{1}{2} \tau)$ as $\Psi^{n + \frac{1}{2}}$. We further introduce the extrapolation formula for $\Psi(t_{n} + \frac{1}{2} \tau)$, given by 
\begin{equation}
  \tilde{\Psi}^{n + \frac{1}{2}} = \frac{\tau_{n+1} + 2\tau_{n}}{2\tau_{n}}\Psi^{n} - \frac{\tau_{n+1}}{2\tau_{n}} \Psi^{n-1}.  
\end{equation}

\subsection{Second-order ETD  mr-ccSAV multistep scheme}

In order to achieve a second-order accuracy approximation $\bm u^{n+1}$, we approximate the nonlinear terms in Eq.\eqref{eqn:mr_sav_v2} with a second-order midpoint extrapolations and set $k=2$. The resulting scheme, which is inspired by \cite{hou2023implicit}, takes the following form 
\begin{equation}\label{eqn:2nd_msSAV_approx}
    \left\{
    \begin{aligned}
    &\frac{\partial \bm u(t)}{\partial t} + \nu \mathcal{L} \bm u(t) + (1 - (r^{n+1})^2) B(\tilde{\bm u}^{n+\frac{1}{2}}, \tilde{\bm u}^{n +\frac{1}{2}}) = f^{n+\frac{1}{2}},\\
    &\frac{d r(t)}{dt} + \gamma r(t) =  (1 - r^{n+1}) (\varphi_1(\tau_{n+1}\gamma))^{-1} \langle \varphi_1(\tau_{n+1} \nu \mathcal{L}) B(\tilde{\bm u}^{n+\frac{1}{2}}, \tilde{\bm u}^{n +\frac{1}{2}}), \bm u^{n+1}\rangle,
    \end{aligned}
    \right.
\end{equation}
for $t\in (t_{n},t_{n+1}]$. Subsequently, by applying the variation of constants formula, the solution $\bm u^{n+1}$, $r^{n+1}$ to system \eqref{eqn:2nd_msSAV_approx} can be expressed as follows:
\begin{subequations}\label{eqn:mrGSAV_ETDMS2}
    \begin{align}
    &\bm u^{n+1} = \varphi_0(\tau_{n+1} \nu \mathcal{L})\bm u^{n} - \tau_{n+1} (1 - (r^{n+1})^2) \varphi_1(\tau_{n+1} \nu \mathcal{L}) B(\tilde{\bm u}^{n+\frac{1}{2}}, \tilde{\bm u}^{n+\frac{1}{2}}) \label{eqn:mrGSAV_ETDMS2_u}\\
    &\qquad\qquad\qquad\qquad\qquad+ \tau_{n+1} \varphi_1(\tau_{n+1} \nu \mathcal{L})f^{n+\frac{1}{2}},\notag \\
    &r^{n+1} = \varphi_0(\tau_{n+1} \gamma) r^{n} + \tau_{n+1}(1 - r^{n+1}) \langle \varphi_1(\tau_{n+1} \nu \mathcal{L}) B(\tilde{\bm u}^{n+\frac{1}{2}}, \tilde{\bm u}^{n+\frac{1}{2}}), \bm u^{n+1} \rangle,\label{eqn:mrGSAV_ETDMS2_q}
    \end{align}
\end{subequations}
where $\varphi_0(\cdot)$ and $\varphi_1(\cdot)$ are defined as
\begin{equation}\label{eqn:exp_relat}
    \varphi_0(z) = \mathrm{e}^{-z},\quad \varphi_1(z) = \frac{1 - \mathrm{e}^{-z}}{z}.
\end{equation}

By following classical SAV technique, the above second-order exponential time differencing mean-reverting ccSAV two-step (ETD-mr-ccSAV-MS2o) scheme can be solved efficiently. The specific solution procedure is outlined as follows:
\begin{enumerate}
    \item Calculate the intermediate variables (via two heat solves):
    \begin{equation}\label{eqn:step_1}
        \begin{aligned}
        & \bm u^{n+1}_1 = \varphi_0(\tau_{n+1} \nu \mathcal{L})\bm u^{n} + \tau_{n+1} \varphi_1(\tau_{n+1} \nu \mathcal{L})f^{n+\frac{1}{2}},\\
        & \bm u^{n+1}_2 = \tau_{n+1}\varphi_1(\tau_{n+1} \nu \mathcal{L}) B(\tilde{\bm u}^{n+\frac{1}{2}}, \tilde{\bm u}^{n+\frac{1}{2}}).
        \end{aligned}
    \end{equation}
    \item Compute the intermediate coefficients according to the following equations:
    \begin{equation}\label{eqn:step_2}
        A^{n+1} = \langle \bm u_1^{n+1}, \bm u_2^{n+1}\rangle,\quad  B^{n+1} = \Vert \bm u_2^{n+1} \Vert^2,\quad C^{n+1} = \varphi_0(\tau_{n+1} \gamma) r^n.
    \end{equation}
    \item Determine the auxiliary variable $r^{n+1}$ by finding the smallest root of the following cubic polynomial:
    \begin{equation}\label{eqn:cubic}
       g(r)=B^{n+1} r^3 - B^{n+1} r^2 + (1 + A^{n+1} - B^{n+1}) r -(A^{n+1} - B^{n+1}  + C^{n+1}).
    \end{equation}
    \item Obtain the numerical solution $\bm u^{n+1}$ using the formula:
    \begin{equation}\label{eqn:step_4}
         \bm u^{n+1} = \bm u_1^{n+1} - ( 1-(r^{n+1})^2) \bm u_2^{n+1}.
    \end{equation}
\end{enumerate}
It is evident that the ETD-mr-ccSAV-MS2o scheme exhibits computational efficiency, as only two heat solves (with the associated FFT) and one cubic polynomial root finding are required per time step.

\begin{rem}[Solvability of the auxiliary variable]

We briefly illustrate the solvability of this cubic algebraic equation \eqref{eqn:cubic}. When the leading coefficient vanishes, i.e., $B^{n+1}=0$, we have $\bm u_2^{n+1} = 0$, which further implies $A^{n+1}=0$. In this case, \eqref{eqn:cubic} reduces to a linear polynomial with leading coefficient equal to 1, and its solvability is guaranteed. When $B^{n+1} > 0$, we have a non-degenerate cubic polynomial whose solvability is guaranteed.

In addition, combining with the facts that $\varphi_0(z) \approx 1$ and $\varphi_1(z) \approx 1$ when $z \ll 1$, the skew symmetric property of the nonlinear term $B(\cdot, \cdot)$, the relationship $\bm u^n - \bm u^{n-1} \approx \mathcal{O}(\tau_n)$, and the assumption that the ratio of adjacent time steps $\tau_{n+1}/\tau_n \approx 1$, 
we can deduce $B^{n+1} = \mathcal{O}(\tau_{n+1}^{2})$, $A^{n+1} = \mathcal{O}(\tau_{n+1}^{2})$, and $C^{n+1} =\mathcal{O}(\tau_{n+1})$.
In addition, the first derivative of $g(r)$ is given by
$$
g'(r) = 3B^{n+1}r^2 - 2B^{n+1}r + (1 + A^{n+1} - B^{n+1}),
$$
and the minimum value of $g'(r)$ is attained at $r = \frac{1}{3}$, yielding $g'(r) = 1 + A^{n+1} - \frac{4}{3} B^{n+1} > 0$ for small time step, which implies that $g(r)$ is non-decreasing over the entire real line and hence we have a unique root. 

In practical computations, Newton's method is utilized to solve the aforementioned cubic equation. Specifically, when $\tau^{n+1}$ is sufficiently small, $g'(r) > 0$ holds. Additionally,  $g''(r) = 6B^{n+1}r - 2B^{n+1} = \mathcal{O}(\tau_{n+1}^{2})$. Therefore, Newton's method would enjoy quadratic convergence with a favorable coefficient.

\ignore{Also, the cubic polynomial admits an explicit analytic solution. 
Let $a = B^{n+1}$, $b = 1 + A^{n+1} - B^{n+1}$, $c = -A^{n+1} + B^{n+1} - C^{n+1}$, and define
\[
P = \frac{b}{a} - \frac{1}{3}, \qquad 
Q = -\frac{2}{27} + \frac{b}{3a} + \frac{c}{a}, \qquad 
{D} = \left( \frac{Q}{2} \right)^2 + \left( \frac{P}{3} \right)^3 .
\]
 The desired root can be written in closed form as
\[
r^{n+1} = \frac{1}{3} + \sqrt[3]{-\frac{Q}{2} + \sqrt{D}} \;+\; \sqrt[3]{-\frac{Q}{2} - \sqrt{D}} .
\]
In practice, the cubic roots are taken as real cube roots, and the above expression directly yields the unique root in $(-1,1)$. 
Given that $B^{n+1} = \mathcal{O}(\tau_{n+1}^{2})$, the coefficients $P, Q, D$ are well‑behaved for small $\tau_{n+1}$, making the formula stable and straightforward to implement.}
\end{rem}

\begin{rem}[Second-order accuracy]  We now give a heuristic argument on the second-order accuracy of $\bm u$. The numerical approximation of $r^{n+1}$ in \eqref{eqn:mrGSAV_ETDMS2_q} formally first-order, since $\varphi_1(\tau\nu \mathcal{L}) \approx I$ when $\tau$ is small, leading to the inner product term 
$$\langle \varphi_1(\tau_{n+1} \nu \mathcal{L}) B(\tilde{\bm u}^{n+\frac{1}{2}}, \tilde{\bm u}^{n+\frac{1}{2}}), \bm u^{n+1} \rangle \approx \mathcal{O}(\tau_{n+1}).$$ 
Nevertheless, \eqref{eqn:mrGSAV_ETDMS2_u} formally achieves second-order accuracy 
due to the following two observations
\begin{enumerate}
    \item $(r^{n+1})^2\tau\varphi_1(\tau \nu \mathcal{L}) B(\tilde{\bm u}^{n+\frac{1}{2}}, \tilde{\bm u}^{n+\frac{1}{2}}) = \mathcal{O}(\tau^3)$. 
    \item Explicit Adam-Bashforth ETD multistep scheme is second-order as reported in \cite{zhang1987schema},
\end{enumerate}
Indeed, notice that $\tau \varphi_1(\tau \nu \mathcal{L}) B(\tilde{\bm u}^{n+\frac{1}{2}}, \tilde{\bm u}^{n+\frac{1}{2}})=\int_0^\tau \varphi_0((\tau-s)\nu\mathcal{L})B(\tilde{\bm u}^{n+\frac{1}{2}}, \tilde{\bm u}^{n+\frac{1}{2}})$,  and $B(\tilde{\bm u}^{n+\frac{1}{2}}, \tilde{\bm u}^{n+\frac{1}{2}})$ is formally a second-order approximation of $B({\bm u}(t_n+\frac{\tau}{2}), {\bm u}(t_n+\frac{\tau}{2}))$. Furthermore, for two smooth functions $g,h$ together with a second-order approximation of $h$ at the mid-point $h^{\frac12}=h(\tau/2)+\mathcal{O}(\tau^2)$, we have
\begin{eqnarray*}    
\int_0^\tau g(s)(h(s)-h^{1/2})
&=& \int_0^\tau g(s)(h(s)-h(\tau/2)) + \int_0^\tau g(s)(h(\tau/2)-h^{1/2})
\\
&=& \int_0^\tau (g(\tau/2)+\mathcal{O}(\tau))(h'(\tau/2)(s-\tau/2)+\mathcal{O}(\tau^2))+\int_0^\tau g(s)\mathcal{O}(\tau^2)
\\
&=& \mathcal{O}(\tau^3)
\end{eqnarray*}
Hence, the local truncation error is formally of the order of $\mathcal{O}(\tau^3)$ implying the second-order accuracy of the scheme in $\bm u$ despite the first-order accuracy in $r$.
\end{rem}

\begin{rem}[Alternative second-order schemes]
    There are two small variants of the ETD-mr-ccSAV-MS2o that we sketch here.
    \begin{enumerate}
        \item 
    We can remove the operator $\varphi_1(\tau_{n+1}\gamma))^{-1} \varphi_1(\tau_{n+1} \nu \mathcal{L})$ from \eqref{eqn:2nd_msGSAV_ETDMS2_q} and arrive at the following alternative numerical scheme  to \eqref{eqn:mrGSAV_ETDMS2_u}-\eqref{eqn:mrGSAV_ETDMS2_q}
    \begin{equation}
            \begin{aligned}
            &\bm u^{n+1} = \varphi_0(\tau_{n+1} \nu \mathcal{L})\bm u^{n} - \tau_{n+1} (1 - (r^{n+1})^2) \varphi_1(\tau_{n+1} \nu \mathcal{L}) B(\tilde{\bm u}^{n+\frac{1}{2}}, \tilde{\bm u}^{n+\frac{1}{2}}) \\
            &\qquad\qquad\qquad\qquad\qquad+ \tau_{n+1} \varphi_1(\tau_{n+1} \nu \mathcal{L})f^{n+\frac{1}{2}},\notag \\
            &r^{n+1} = \varphi_0(\tau_{n+1} \gamma) r^{n} + \tau_{n+1}(1 - r^{n+1}) \varphi_1(\tau_{n+1} \gamma) \langle  B(\tilde{\bm u}^{n+\frac{1}{2}}, \tilde{\bm u}^{n+\frac{1}{2}}), \bm u^{n+1} \rangle.
            \end{aligned}
    \end{equation}
   This alternative scheme enjoys the same second-order for $\bm u^{n+1}$ and the computational complexity. Moreover, the stability analysis is similar to that of Theorem \ref{thm:etd_gsav_stability}, though it may require higher regularity for the solutions.

  \item Another choice is to directly extrapolate the nonlinear terms, and we arrive at the following second-order ETD mr-ccSAV scheme:
   \begin{equation*}
    \begin{aligned}
    &\bm u^{n+1} = \varphi_0(\tau_{n+1} \nu \mathcal{L})\bm u^{n} - \tau_{n+1} (1 - (r^{n+1})^2) \varphi_1(\tau_{n+1} \nu \mathcal{L})( \frac{\tau_{n+1} + 2\tau_{n}}{2\tau_{n}} B(\bm u^{n}, \bm u^n)
    - \frac{\tau_{n+1}}{2\tau_{n}} B(\bm u^{n-1}, \bm u^{n-1})  ) \\
    &\qquad\qquad\qquad\qquad\qquad+ \tau_{n+1} \varphi_1(\tau_{n+1} \nu \mathcal{L})f^{n+\frac{1}{2}},\notag \\
    &r^{n+1} = \varphi_0(\tau_{n+1} \gamma) r^{n} + \tau_{n+1}(1 - r^{n+1}) \langle \varphi_1(\tau_{n+1} \nu \mathcal{L})( \frac{\tau_{n+1} + 2\tau_{n}}{2\tau_{n}} B(\bm u^{n}, \bm u^n) - \frac{\tau_{n+1}}{2\tau_{n}} B(\bm u^{n-1}, \bm u^{n-1})), \bm u^{n+1} \rangle.
    \end{aligned}
   \end{equation*}
    This scheme exhibits identical stability results and similar numerical performance to the scheme presented in Eqs. \eqref{eqn:mrGSAV_ETDMS2_u}–\eqref{eqn:mrGSAV_ETDMS2_q}. Consequently, it is only briefly introduced herein, and no in-depth analysis will be provided.
    \end{enumerate}
\end{rem}

\subsection{Embedded adaptive time stepping ETD mr-ccSAV multistep scheme}

We now present an embedded adaptive time-stepping algorithm based on our long-time stable variable-step second-order scheme \eqref{eqn:mrGSAV_ETDMS2}. Such an embedded adaptive scheme would enable automatic error control without significant computational overhead while maitaining long-time stability.
See \cite{Fehlberg1969NASA, DormandPrince1980JCAM} for some historical notes, and \cite{DeCariaSchneier2021CMAME} for a recent development of a 1--2 pair for the incompressible Navier-Stokes equations (without $L^\infty(0,\infty; L^2)$ bound).

First, a first-order scheme is constructed by considering the special case of \eqref{eqn:mrGSAV_ETDMS2_u}-\eqref{eqn:mrGSAV_ETDMS2_q} when $k= 1$, which yields the following scheme
\begin{subequations}\label{eqn:mrGSAV_ETDMS1}
    \begin{align}
    &\bar{\bm u}^{n+1} = \varphi_0(\tau_{n+1} \nu \mathcal{L})\bm u^{n} - \tau_{n+1} (1 - \bar{r}^{n+1}) \varphi_1(\tau_{n+1} \nu \mathcal{L}) B(\tilde{\bm u}^{n+\frac{1}{2}}, \tilde{\bm u}^{n+\frac{1}{2}}) \label{eqn:mrGSAV_ETDMS1_u}\\
    &\qquad\qquad\qquad\qquad\qquad+ \tau_{n+1} \varphi_1(\tau_{n+1} \nu \mathcal{L})f^{n+\frac{1}{2}},\notag \\
    &\bar{r}^{n+1} = \varphi_0(\tau_{n+1} \gamma) r^{n} - \tau_{n+1} \langle \varphi_1(\tau_{n+1} \nu \mathcal{L}) B(\tilde{\bm u}^{n+\frac{1}{2}}, \tilde{\bm u}^{n+\frac{1}{2}}), \bar{\bm u}^{n+1} \rangle,\label{eqn:mrGSAV_ETDMS1_q}
    \end{align}
\end{subequations}
It is straightforward to verify that $\bar{r}^{n+1}$ in \eqref{eqn:mrGSAV_ETDMS1_u} is also of the order of $\tau_{n+1}$. Consequently, $1 - \bar{r}^{n+1}$ constitutes a first order approximation of $1$, implying that $\bar{\bm u}^{n+1}$ only achieves first-order accuracy. Hence this scheme is referred to as the first-order exponential time differencing mean-reverting SAV (\textbf{ETD-mr-ccSAV-MS1o}) scheme.

Due to its structural similarity to the ETD-mr-ccSAV-MS2o scheme,
the intermediate values in solving \eqref{eqn:mrGSAV_ETDMS1_u} can be utilized to solve  (\textbf{ETD-mr-ccSAV-MS1o}).
Specifically, the procedure can be implemented as follows:
\begin{enumerate}
    \item Determine the auxiliary variable $\bar{r}^{n+1}$ by solving the linear formula:
    \begin{equation} \label{eqn:step_5}
        \bar{r}^{n+1} = C^{n+1} + A^{n+1} + (1 - \bar{r}^{n+1})B^{n+1}.
    \end{equation}
    \item Obtain the velocity field $\bar{\bm u}^{n+1}$ using the formula:
    \begin{equation}\label{eqn:step_6}
         \bar{\bm u}^{n+1} = \bm u_1^{n+1} - (1 + \bar{r}^{n+1}) \bm u_2^{n+1}.
    \end{equation}
\end{enumerate}

Combining the first-order and second-order ETD-mr-SAV schemes presented above, we now construct an embedded adaptive algorithm. Drawing inspiration from \cite{hairer1993solving}, the adaptive strategy is summarized in {\bf Algorithm \ref{alg:adaptive}} with the following time-step update function: 
\begin{equation}\label{update_function}
A_{dp}(e_{\bm u}, e_q, \tau)
= \rho \left(\min\left\{\frac{\text{tol}_{\bm u}}{e_{\bm u}},\ \frac{\text{tol}_{q}}{e_q}\right\}\right)^{\frac{1}{2}}\tau.
\end{equation}
 Here, $\rho$ is a safety factor, $\text{tol}_{\bm u}$ and $\text{tol}_q$ are prescribed tolerances in the fluid variable $\bm u$ and the scalar auxiliary variable $r$, respectively. These parameters are determined by the specific problem under consideration. We refer to this scheme as the adaptive time-stepping algorithm as \textbf{ETD-mr-ccSAV-MS12} scheme.

\begin{algorithm}[ht]
\footnotesize
\caption{ETD-mr-ccSAV-MS12 scheme}
\label{alg:adaptive}
\begin{algorithmic}
\State \textbf{Given:} $\bm u^{n}, \bm u^{n-1}$, $r^{n}$, $\tau_{n+1}$.
\State \textbf{Step 1.} Compute the intermediate variables $\bm u_1^{n+1}$, $\bm u_2^{n+1}$, as well as the coefficients $A^{n+1}$, $B^{n+1}$ and $C^{n+1}$ via \eqref{eqn:step_1} and \eqref{eqn:step_2}.
\State \textbf{Step 2.} Compute the auxiliary variables $r^{n+1}$ and $\bar{r}^{n+1}$ via \eqref{eqn:step_4} and \eqref{eqn:step_5}, respectively.
\State \textbf{Step 3.} Compute the second-order approximate solution $\bm u^{n+1}$ and the first-order approximated solution $\bar{\bm u}^{n+1}$ via the ETD-mr-ccSAV-MS2o \eqref{eqn:step_4} and ETD-mr-SAV-MS1o \eqref{eqn:step_6}, respectively.
\State \textbf{Step 4.} Compute $e_{\bm u}^{n+1} = \frac{\Vert \bar{\bm u}^{n+1} - \bm u^{n+1} \Vert}{\max\{ \Vert \bar{\bm u}^{n+1} \Vert,\ \Vert \bm u^{n+1} \Vert\}}$ and $e_q^{n+1}=|r^{n+1}-1|$.
\If{$e_{\bm u}^{n+1} \le  \text{tol}_{\bm u}$ and $e_{q}^{n+1} \le  \text{tol}_q$}
    \State \textbf{Step 5.} Update $\tau_{n+2} \leftarrow \max\{\tau_{\min},\ \min\{A_{dp}(e_{\bm u}^{n+1}, e_q^{n+1}, \tau_{n+1}),\ \tau_{\max}\}\}$.
\Else
    \State \textbf{Step 6.} Reset $\tau_{n+1} \leftarrow \max\{\tau_{\min},\ \min\{A_{dp}(e_{\bm u}^{n+1}, e_q^{n+1}, \tau_{n+1}),\ \tau_{\max}\}\}$.
    \State \textbf{Step 7.} Go to \textbf{Step 1}.
\EndIf
\end{algorithmic}
\end{algorithm}

\begin{rem}
    In this adaptive time stepping scheme, a first-order single-step scheme with $\tilde{\bm u}^{n+1/2}$ replaced by $\bm u^n$ for computation. However, in order to construct an embedded adaptive algorithm without additional computational cost, here we choose the first-order two-step ETD-mr-SAV scheme to ensure sharing the intermediate calculation steps  process with ETD-mr-SAV-MS2o scheme.
\end{rem}
}

\subsection{Related schemes}
\subsubsection{ETD-ccSAV-MS2 and ETD-MS2}
There are two closely related schemes. The {\bf ETD-ccSAV-MS2 scheme} is a special case of the ETD-mr-ccSAV-MS2 scheme by setting the mean-reverting parameter $\gamma=0$ in \eqref{eqn:mrGSAV_ETDMS2_q}.
\begin{subequations}\label{eqn:ETD-SAV-MS2}
    \begin{align}
    &\omega^{n+1} = \varphi_0(\tau_{n+1} \nu \mathcal{L})\omega^{n}
    - \tau_{n+1}\big(1 - (r^{n+1})^2\big)\varphi_1(\tau_{n+1} \nu \mathcal{L})
    {\tilde{B}^{n+\frac{1}{2}}} \label{eqn:ETD-SAV-MS2_u}\\
    &\qquad\qquad\qquad\qquad\qquad
    + \tau_{n+1} \varphi_1(\tau_{n+1} \nu \mathcal{L})f^{n+\frac{1}{2}},\notag \\
    &r^{n+1} =  r^{n}
    + \tau_{n+1}(1 - r^{n+1}){{\tilde{\gamma}}}
    \Big\langle \varphi_1(\tau_{n+1} \nu \mathcal{L}){\tilde{B}^{n+\frac{1}{2}}} , \omega^{n+1} \Big\rangle.\label{eqn:ETD-SAV-MS2_q}
    \end{align}
\end{subequations}
This scheme will be utilized in our numerical experiments to isolate the effect of mean-reversion.
Moreover, we can eliminate SAV by setting $r^{n+1}=0$ in \eqref{eqn:mrGSAV_ETDMS2_u} and arrive at the following {\bf ETD-MS2 scheme} \cite{zhang1987schema}
\begin{equation}\label{ETD-MS2}
\omega^{n+1} = \varphi_0(\tau_{n+1} \nu \mathcal{L})\omega^{n}
    - \tau_{n+1}\varphi_1(\tau_{n+1} \nu \mathcal{L})
    {\tilde{B}^{n+\frac{1}{2}}}.
\end{equation}
This scheme will be utilized to test the effect of mean-reverting SAV.

\subsubsection{Alternative second-order schemes}
We also briefly describe two minor variants of the ETD-mr-ccSAV-MS2o scheme here.
\begin{enumerate}
\item We may remove the operator $\big(\varphi_1(\tau_{n+1}\gamma)\big)^{-1}\varphi_1(\tau_{n+1}\nu\mathcal{L})$ from \eqref{eqn:2nd_msSAV_approx} and arrive at the following alternative scheme to \eqref{eqn:mrGSAV_ETDMS2_u}--\eqref{eqn:mrGSAV_ETDMS2_q}:
\begin{equation*}
\begin{aligned}
&\omega^{n+1} = \varphi_0(\tau_{n+1} \nu \mathcal{L})\omega^{n}
- \tau_{n+1} (1 - (r^{n+1})^2) \varphi_1(\tau_{n+1} \nu \mathcal{L})
{\tilde{B}^{n+\frac{1}{2}}}\\
&\qquad\qquad\qquad\qquad\qquad+ \tau_{n+1} \varphi_1(\tau_{n+1} \nu \mathcal{L})f^{n+\frac{1}{2}},\\
&r^{n+1} = \varphi_0(\tau_{n+1} \gamma) r^{n}
+ \tau_{n+1}(1 - r^{n+1}) \varphi_1(\tau_{n+1} \gamma){{\tilde{\gamma}}}
\big\langle {\tilde{B}^{n+\frac{1}{2}}}, \omega^{n+1} \big\rangle.
\end{aligned}
\end{equation*}
This alternative scheme has the same second-order accuracy for $\omega^{n+1}$ and the same computational complexity. Moreover, the stability analysis is similar to that of Theorem~\ref{thm:etd_gsav_stability}, though it may require higher regularity of the solution.

\item Another choice is to approximate the nonlinear term at the 2nd-order extrapolated point $\tilde{\omega}^{n+\frac12}$, leading to the following second-order ETD-mr-ccSAV scheme:
\begin{equation*}
\begin{aligned}
&\omega^{n+1} = \varphi_0(\tau_{n+1} \nu \mathcal{L})\omega^{n}
- \tau_{n+1} (1 - (r^{n+1})^2) \varphi_1(\tau_{n+1} \nu \mathcal{L})
{B(\tilde{\psi}^{n+\frac{1}{2}}, \tilde{\omega}^{n+\frac{1}{2}})}\\
&\qquad\qquad\qquad\qquad\qquad
+ \tau_{n+1} \varphi_1(\tau_{n+1} \nu \mathcal{L})f^{n+\frac{1}{2}},\\
&r^{n+1} = \varphi_0(\tau_{n+1} \gamma) r^{n}
+ \tau_{n+1}(1 - r^{n+1}){{\tilde{\gamma}}}
\Big\langle \varphi_1(\tau_{n+1} \nu \mathcal{L})
{B(\tilde{\psi}^{n+\frac{1}{2}}, \tilde{\omega}^{n+\frac{1}{2}})}, \omega^{n+1} \Big\rangle.
\end{aligned}
\end{equation*}
This scheme exhibits the same stability results and similar numerical performance as \eqref{eqn:mrGSAV_ETDMS2_u}--\eqref{eqn:mrGSAV_ETDMS2_q}. Therefore, it is only briefly introduced here and will not be analyzed further.
\end{enumerate}

\begin{rem}[The velocity-pressure formulation]
The present analysis is restricted to the two-dimensional periodic vorticity formulation. In velocity-pressure variables, one may formally formulate analogous ETD-mr-ccSAV schemes using the Stokes operator and Leray projection; a detailed analysis for such formulations is beyond the scope of this work.
\end{rem}


\section{Long-time stability of the ETD-mr-ccSAV scheme}\label{sec:4}

The purpose of this section is to derive uniform-in-time energy bounds for the ETD-mr-ccSAV scheme proposed in the previous section. More precisely, we establish $L^\infty(0,\infty;L^2)$ stability (together with a corresponding cumulative dissipation estimate) under uniformly bounded forcing.

\medskip
We first recall several elementary properties of the two functions $\varphi_j$, $j=0,1$, that are related to ETD schemes.

\begin{lemma}\label{lem:phi_prop}
The functions $\varphi_i$ defined in \eqref{eqn:exp_relat} satisfy:
\begin{itemize}
    \item[(1)] $\varphi_i(z)$ is decreasing for $i=0,1$.
    \item[(2)] $z\varphi_1(z)+\varphi_0(z)=1$.
    \item[(3)] $0\le \varphi_0(z)\le 1$, $0\le \varphi_1(z)\le 1$, and
    $1+\frac{z}{2}\le (\varphi_1(z))^{-1}\le 1+z,\qquad \forall z\ge 0$.
    \item[(4)] $(\varphi_1(z))^{-1}\ge z$, \qquad $\forall z\in \mathbb{R}^+$.
\end{itemize}
\end{lemma}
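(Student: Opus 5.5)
All four properties are elementary consequences of the definitions $\varphi_0(z)=e^{-z}$ and $\varphi_1(z)=(1-e^{-z})/z$ (extended by $\varphi_1(0)=1$). The plan is to handle them one by one, with the identity (2) as the basic tool for everything else. Property (2) is immediate: $z\varphi_1(z)=1-e^{-z}$, and adding $\varphi_0(z)=e^{-z}$ gives $1$.

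For (1), $\varphi_0$ is clearly decreasing. For $\varphi_1$ I will use the integral representation
\[
\varphi_1(z)=\int_0^1 e^{-sz}\,ds.
\]
Each integrand $e^{-sz}$ with $s\in(0,1]$ is strictly decreasing in $z$, so $\varphi_1$ is decreasing on all of $\mathbb{R}$. This representation also handles $z=0$ without a separate limit argument.

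For (3), the bounds $0\le\varphi_0\le1$ on $z\ge0$ are trivial. The same integral representation gives $0<\varphi_1(z)\le 1$, since the integrand lies in $(0,1]$ for $z\ge0$. The two-sided bound on $(\varphi_1(z))^{-1}=z/(1-e^{-z})$ is the only step needing a real inequality, and I treat each side separately.

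\emph{Upper bound.} The claim $z/(1-e^{-z})\le 1+z$ is equivalent to $z\le (1+z)(1-e^{-z})$, i.e.\ $(1+z)e^{-z}\le 1$. This is the familiar inequality $e^{z}\ge 1+z$.

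\emph{Lower bound.} The claim $1+z/2\le z/(1-e^{-z})$ is equivalent to $(1+z/2)(1-e^{-z})\le z$. Multiplying by $e^{z}$ and rearranging, this becomes
\[
h(z):=(z-2)e^{z}+z+2\ge 0 .
\]
I will show $h\ge0$ by a derivative argument. We have $h(0)=0$ and $h'(z)=(z-1)e^{z}+1$, with $h'(0)=0$. Further, $h''(z)=ze^{z}\ge0$ for $z\ge0$. So $h'$ is nondecreasing from $0$, hence $h'\ge0$, hence $h$ is nondecreasing from $0$, giving $h\ge0$. Equivalently, one can compare Taylor coefficients: $z\coth(z/2)/2\ge 1$ gives the same inequality.

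This lower bound is the only step I expect to require any care. The obstacle is just choosing the right rearrangement, $h(z)=(z-2)e^z+z+2\ge0$, which then yields to the double-derivative check above.

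Finally, (4) follows directly from (3): $(\varphi_1(z))^{-1}\ge 1+z/2\ge z$ for $0\le z\le 2$. For any $z\ge0$ one can also argue without (3): $(\varphi_1(z))^{-1}=z/(1-e^{-z})\ge z$, since $0<1-e^{-z}\le 1$ for $z>0$. The case $z=0$ is trivial because $\varphi_1(0)^{-1}=1\ge0$.
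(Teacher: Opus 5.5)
Your proof is correct and complete. The paper gives no proof of this lemma; it introduces the four properties as elementary facts to be recalled. So there is no argument of the paper's to compare against.

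Two small points of hygiene:

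\begin{enumerate}
\item Your opening line for (4) only covers $0\le z\le 2$, because $1+z/2<z$ once $z>2$. It is your second argument that proves (4) on all of $\mathbb{R}^+$, so lead with it. It is cleanest stated through (2): since $\varphi_0\ge 0$, we have $z\varphi_1(z)=1-\varphi_0(z)\le 1$.
\item The rearrangements you use for the two-sided bound in (3) divide by $1-e^{-z}$, so they are valid only for $z>0$. At $z=0$ both bounds hold with equality because $\varphi_1(0)=1$; say so explicitly.
\end{enumerate}

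As an aside, your integral representation $\varphi_1(z)=\int_0^1 e^{-sz}\,ds$ gives both bounds in (3) without the auxiliary function $h$. Using (2) to write $z\varphi_1(z)=1-e^{-z}$, the bound $(\varphi_1(z))^{-1}\le 1+z$ is equivalent to $\varphi_1(z)\ge e^{-z}$. Likewise, $(\varphi_1(z))^{-1}\ge 1+z/2$ is equivalent to $\varphi_1(z)\le (1+e^{-z})/2$. Together these say that the integral over $[0,1]$ of the decreasing, convex function $s\mapsto e^{-sz}$ lies between its minimum value and its trapezoid-rule value. This version also covers $z=0$ automatically.
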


\medskip
It is then straightforward to verify, with the help of the Fourier representation, that for any $h\in V$ and $\tau>0$,
\begin{equation}\label{eqn:phi_op_L}
\begin{aligned}
\varphi_1(\tau \nu \mathcal{L}) h
&=(\tau \nu \mathcal{L})^{-1}\bigl(I-e^{-\tau \nu \mathcal{L}}\bigr) h
=\sum_{\bm k\in \mathbb{Z}^2\setminus\{0\}}
\frac{1-e^{-\tau \nu \lambda_{\bm k}}}{\tau \nu \lambda_{\bm k}}\,
\hat{\bm h}_{\bm k}e^{\mathrm{i}\bm k\cdot \bm x},\\
(\varphi_1(\tau \nu \mathcal{L}))^{-1} h
&=\bigl(I-e^{-\tau \nu \mathcal{L}}\bigr)^{-1}(\tau \nu \mathcal{L}) h
=\sum_{\bm k\in \mathbb{Z}^2\setminus\{0\}}
\frac{\tau \nu \lambda_{\bm k}}{1-e^{-\tau \nu \lambda_{\bm k}}}\,
\hat{\bm h}_{\bm k}e^{\mathrm{i}\bm k\cdot \bm x},
\end{aligned}
\end{equation}
where $\lambda_{\bm k}$ denotes the eigenvalue of the operator $\mathcal{L}$ associated with the divergence-free Fourier mode
$\hat{\bm h}_{\bm k} e^{\mathrm{i}\bm k\cdot \bm x}$, with $\bm k\cdot \hat{\bm h}_{\bm k}=0$.

We also recall the Poincar\'e inequality
\begin{equation}\label{eqn:poincare}
\Vert  h \Vert \le \frac{1}{\sqrt{\lambda_1}} \Vert \mathcal{L}^{1/2} h \Vert,\qquad \forall  h\in V,
\end{equation}
where $\lambda_1>0$ is the smallest eigenvalue of $\mathcal{L}$.

\medskip
Combining the spectral representation \eqref{eqn:phi_op_L} with Lemma~\ref{lem:phi_prop}, we obtain the following simple estimates.

\begin{lemma}\label{lem:inv_opest}
For any $ h\in V$ and $\tau>0$, the following estimates hold:
\begin{eqnarray}
 & \Vert  h \Vert^2
= \tau \nu \Vert (\varphi_1(\tau \nu \mathcal{L}))^{1/2}\mathcal{L}^{1/2} h \Vert^2
+ \Vert (\varphi_0(\tau \nu \mathcal{L}))^{1/2} h \Vert^2, \label{eqn:op_est_1}
\\
\label{eqn:op_est_2}
& \Vert (\varphi_1(\tau \nu \mathcal{L}))^{1/2} h \Vert^2 \le \Vert  h \Vert^2,
\\
\label{eqn:inv_op_est_1}
& \Vert  h \Vert^2 + \frac{\tau \nu}{2}\Vert \mathcal{L}^{1/2} h \Vert^2
\le \Vert (\varphi_1(\tau \nu \mathcal{L}))^{-1/2} h \Vert^2
\le \Vert  h \Vert^2 + \tau \nu \Vert \mathcal{L}^{1/2} h \Vert^2,
\\
\label{eqn:inv_op_est_2}
& \Vert (\varphi_1(\tau \nu \mathcal{L}))^{-1/2} h \Vert^2
\ge \tau \nu \Vert \mathcal{L}^{1/2} h \Vert^2.
\end{eqnarray}
\end{lemma}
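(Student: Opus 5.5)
The plan is to argue mode by mode in Fourier space. All four operators involved, namely $\varphi_0(\tau\nu\mathcal{L})$, $\varphi_1(\tau\nu\mathcal{L})$, $(\varphi_1(\tau\nu\mathcal{L}))^{-1}$ and $\mathcal{L}$, are diagonal in the Fourier basis, as recorded in \eqref{eqn:phi_op_L}. Writing $h=\sum_{\bm k\neq 0}\hat h_{\bm k}e^{\mathrm{i}\bm k\cdot\bm x}$ and using Parseval, each squared norm in the statement becomes a weighted sum $\sum_{\bm k}w(z_{\bm k})|\hat h_{\bm k}|^2$ with $z_{\bm k}=\tau\nu\lambda_{\bm k}\ge 0$. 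For instance, $\|\mathcal{L}^{1/2}h\|^2=\sum\lambda_{\bm k}|\hat h_{\bm k}|^2$ and $\|(\varphi_1)^{-1/2}h\|^2=\sum(\varphi_1(z_{\bm k}))^{-1}|\hat h_{\bm k}|^2$. Since all weights are real and nonnegative, the square roots of the operators are well defined spectrally. Each claimed identity or inequality therefore reduces to a scalar statement about the weights $w(z)$ for $z\ge0$, which is supplied by Lemma~\ref{lem:phi_prop}.

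The four statements then follow one by one.
\begin{itemize}
\item For \eqref{eqn:op_est_1}, the weight on the right-hand side is $\tau\nu\lambda_{\bm k}\varphi_1(z_{\bm k})+\varphi_0(z_{\bm k})=z_{\bm k}\varphi_1(z_{\bm k})+\varphi_0(z_{\bm k})$. This equals $1$ by Lemma~\ref{lem:phi_prop}(2), so the identity holds termwise.
\item For \eqref{eqn:op_est_2}, the bound $0\le\varphi_1(z)\le1$ from Lemma~\ref{lem:phi_prop}(3) gives the inequality termwise.
\item For \eqref{eqn:inv_op_est_1}, multiply the two-sided bound $1+\frac z2\le(\varphi_1(z))^{-1}\le 1+z$ from Lemma~\ref{lem:phi_prop}(3), evaluated at $z=z_{\bm k}$, by $|\hat h_{\bm k}|^2$ and sum over $\bm k$. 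The terms $\frac{z_{\bm k}}{2}|\hat h_{\bm k}|^2$ and $z_{\bm k}|\hat h_{\bm k}|^2$ sum to $\frac{\tau\nu}{2}\|\mathcal{L}^{1/2}h\|^2$ and $\tau\nu\|\mathcal{L}^{1/2}h\|^2$ respectively.
\item For \eqref{eqn:inv_op_est_2}, use Lemma~\ref{lem:phi_prop}(4), $(\varphi_1(z))^{-1}\ge z$, termwise. Alternatively, drop the nonnegative $\|h\|^2$ term from the left inequality in \eqref{eqn:inv_op_est_1}; that route only yields the constant $\tfrac12$, so item (4) is the right tool.
\end{itemize}

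The only care needed is that the sums converge. For $h\in V$ we have $\sum\lambda_{\bm k}|\hat h_{\bm k}|^2<\infty$. Together with $(\varphi_1(z))^{-1}\le 1+z$, this makes every series finite, including the one for the unbounded operator $(\varphi_1)^{-1/2}$. Lemma~\ref{lem:phi_prop} itself is taken as given. If it had to be verified, the steps are elementary: (2) is algebra; (3) uses $1-e^{-z}\le z$ and $e^{z}\ge 1+z$; and the upper bound $z/(1-e^{-z})\le1+z$ is equivalent to $(1+z)e^{-z}\le 1$. I expect no real obstacle. The main point to watch is that the weights are exactly the scalar functions evaluated at $z_{\bm k}=\tau\nu\lambda_{\bm k}$, with the $\lambda_{\bm k}$ factor absorbed correctly into $\mathcal{L}^{1/2}$.
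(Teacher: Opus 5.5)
Your proposal is correct and takes the same route as the paper: the paper gives no proof beyond invoking the Fourier representation \eqref{eqn:phi_op_L} together with the scalar facts of Lemma~\ref{lem:phi_prop} mode by mode, and your termwise argument does exactly this, including using item (4) rather than \eqref{eqn:inv_op_est_1} for \eqref{eqn:inv_op_est_2}. One small correction to your aside on checking Lemma~\ref{lem:phi_prop}: the lower bound $1+\frac z2\le(\varphi_1(z))^{-1}$ does not follow from $1-e^{-z}\le z$ and $e^{z}\ge 1+z$ alone, since it is equivalent to $\tanh(z/2)\le z/2$ (equivalently $g(z)=\frac z2-1+(1+\frac z2)e^{-z}\ge 0$, which holds because $g(0)=0$ and $g'(z)=\frac12\bigl(1-(1+z)e^{-z}\bigr)\ge 0$).
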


\subsection{Long-time stability for the ETD-mr-ccSAV-MS2o scheme}
We derive an $L^\infty(0,\infty;L^2)$ bound for the \textbf{ETD-mr-ccSAV-MS2o} scheme \eqref{eqn:mrGSAV_ETDMS2}. The same argument is also applicable to the \textbf{ETD-mr-SAV-MS1o} scheme. Throughout the proof we use Lemmas~\ref{lem:phi_prop}--\ref{lem:inv_opest} and the Poincar\'e inequality.

The proof relies on the exact cancellation between the ccSAV factor $1-(r^{n+1})^2$ in the vorticity equation and the factor $1-r^{n+1}$ in the auxiliary equation after testing by $r^{n+1}
+1$. The mean-reverting term then provides damping for the scalar variable, while the ETD heat operator provides damping for the vorticity.
\begin{theorem}\label{thm:etd_gsav_stability}
Assume $(\omega^i,r^i)\in (\dot{ H}^\alpha_{per}(\Omega)\cap  H)\times\mathbb{R}$ for $i=0,1$ with $\alpha > 0$, and $f\in L^\infty(0,\infty; H)$.
Let
\begin{equation}\label{theta}
\theta := \min\{\nu\lambda_1,\gamma\}>0,
\end{equation}
where $\lambda_1$ is the first eigenvalue of $\mathcal{L}$.
Then for all $n\ge1$, the ETD-mr-ccSAV-MS2o scheme is uniform-in-time stable in the sense that
\begin{equation}\label{eqn:L2_estimate_gsav}
{{\tilde{\gamma}}}\Vert \omega^{n+1} \Vert^2 + |r^{n+1}+1|^2
\le e^{-\theta \sum_{i=1}^n\tau_{i+1}}\bigl({{\tilde{\gamma}}}\Vert \omega^1\Vert^2 + |r^1+1|^2\bigr)
+ \frac{1}{\theta}\Bigl(\frac{{{\tilde{\gamma}}}}{\nu\lambda_1}\Vert f\Vert_{L^\infty(0,\infty; H)}^2 + \gamma\Bigr).
\end{equation}
\end{theorem}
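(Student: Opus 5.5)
The plan is to derive a one-step recursion for the quantity $E^{n}:={\tilde{\gamma}}\Vert\omega^{n}\Vert^2+|r^{n}+1|^2$ of the form $E^{n+1}-M\le e^{-\theta\tau_{n+1}}(E^n-M)$ with $M=\frac{1}{\theta}\bigl(\frac{{\tilde{\gamma}}}{\nu\lambda_1}\Vert f\Vert_{L^\infty(0,\infty;H)}^2+\gamma\bigr)$ (or something no larger). Iterating this from $n=1$ gives \eqref{eqn:L2_estimate_gsav}. The recursion comes from testing the vorticity update \eqref{eqn:mrGSAV_ETDMS2_u} with ${\tilde{\gamma}}\,\omega^{n+1}$ and the scalar update \eqref{eqn:mrGSAV_ETDMS2_q} with $r^{n+1}+1$, and adding the two identities.

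\emph{Step 1: cancellation of the nonlinear term.} Write $g:=\langle\varphi_1(\tau_{n+1}\nu\mathcal L)\tilde B^{n+\frac12},\omega^{n+1}\rangle$ and $\tau=\tau_{n+1}$. Testing \eqref{eqn:mrGSAV_ETDMS2_u} with ${\tilde{\gamma}}\omega^{n+1}$ produces the nonlinear contribution $-{\tilde{\gamma}}\tau(1-(r^{n+1})^2)g$. Multiplying \eqref{eqn:mrGSAV_ETDMS2_q} by $r^{n+1}+1$ produces $+{\tilde{\gamma}}\tau(1-r^{n+1})(1+r^{n+1})g$. These two terms cancel exactly, so $g$ never has to be estimated. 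This is precisely why the root selection and any uniqueness question are irrelevant. Only the algebraic relations satisfied by the computed pair $(\omega^{n+1},r^{n+1})$ are used.

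\emph{Step 2: separate estimates.} After the cancellation, the two pieces decouple, and I treat them separately.

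For the scalar part, set $s=r^{n+1}+1$ and $s_n=r^n+1$. Using $r^{n+1}(r^{n+1}+1)=s^2-s$ and $r^n=s_n-1$, the scalar identity becomes
\[
s^2=\varphi_0(\tau\gamma)s_ns+(1-\varphi_0(\tau\gamma))s.
\]
Young's inequality on each term on the right gives
\[
s^2\le e^{-\tau\gamma}s_n^2+(1-e^{-\tau\gamma}).
\]

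For the vorticity part, I avoid the inner product form and instead estimate the norm directly. In Fourier space, $\Vert\varphi_0(\tau\nu\mathcal L)\omega^n\Vert\le e^{-a}\Vert\omega^n\Vert$ with $a=\tau\nu\lambda_1$. Since $z\mapsto\varphi_1(z)$ is decreasing (Lemma~\ref{lem:phi_prop}), we also have $\tau\Vert\varphi_1(\tau\nu\mathcal L)f\Vert\le\frac{1-e^{-a}}{\nu\lambda_1}\Vert f\Vert$. From the tested identity $\Vert\omega^{n+1}\Vert^2=\langle\varphi_0\omega^n,\omega^{n+1}\rangle+\tau\langle\varphi_1 f^{n+\frac12},\omega^{n+1}\rangle$, dividing by $\Vert\omega^{n+1}\Vert$ gives
\[
\Vert\omega^{n+1}\Vert\le e^{-a}\Vert\omega^n\Vert+(1-e^{-a})\frac{\Vert f\Vert}{\nu\lambda_1}.
\]
Convexity of $x\mapsto x^2$ then yields
\[
\Vert\omega^{n+1}\Vert^2\le e^{-a}\Vert\omega^n\Vert^2+(1-e^{-a})\frac{\Vert f\Vert^2}{(\nu\lambda_1)^2}.
\]

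\emph{Step 3: unify the two rates.} This is where I expect the main bookkeeping obstacle, because the constants must come out exactly as in \eqref{eqn:L2_estimate_gsav}. Since $\theta\le\nu\lambda_1$ and $\theta\le\gamma$, I replace $e^{-a}$ and $e^{-\tau\gamma}$ by $e^{-\theta\tau}$. This is legitimate once each bound is rewritten as $X\le e^{-b}Y+(1-e^{-b})Z$ and one observes that the right-hand side is monotone in $b$ in the favorable direction whenever $Y\ge Z$; if $Y<Z$, then $X\le Z$ already, which is also fine for the target form. For the source terms I use the monotonicity of $(1-e^{-x})/x$:
\[
1-e^{-a}\le\frac{\nu\lambda_1}{\theta}(1-e^{-\theta\tau}),\qquad 1-e^{-\tau\gamma}\le\frac{\gamma}{\theta}(1-e^{-\theta\tau}).
\]
Together these give $E^{n+1}\le e^{-\theta\tau}E^n+(1-e^{-\theta\tau})M$, which is the recursion $E^{n+1}-M\le e^{-\theta\tau}(E^n-M)$. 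Iterating from $n=1$ gives \eqref{eqn:L2_estimate_gsav}.

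As a fallback in case the max-type argument for the $Y<Z$ case gets messy, I would keep the rates $a$ and $\tau\gamma$ separate through the whole induction. I would then only compare the final products $\prod e^{-a_j}\le e^{-\theta\sum\tau_j}$ and sum the source contributions as telescoping geometric series.
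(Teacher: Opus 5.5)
\textbf{The gap is in Step~2.} After the cancellation the two pieces do \emph{not} decouple: $g$ drops out only of the \emph{sum} of the two tested equations.

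Individually, testing \eqref{eqn:mrGSAV_ETDMS2_u} with $\tilde{\gamma}\omega^{n+1}$ gives $\tilde{\gamma}\Vert\omega^{n+1}\Vert^2=\tilde{\gamma}\langle X,\omega^{n+1}\rangle-\tilde{\gamma}\tau(1-(r^{n+1})^2)g$, where $X:=\varphi_0(\tau\nu\mathcal L)\omega^n+\tau\varphi_1(\tau\nu\mathcal L)f^{n+\frac12}$. Likewise, your scalar identity is really $s^2=\varphi_0(\tau\gamma)s_ns+(1-\varphi_0(\tau\gamma))s+\tilde{\gamma}\tau(1-(r^{n+1})^2)g$. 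The term $g$ is the energy exchanged between $\omega$ and $r$, and it has no sign.

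The vorticity identity you divide by $\Vert\omega^{n+1}\Vert$ would amount to $g=0$ whenever $r^{n+1}\neq\pm1$. That is exactly what fails when advection is treated explicitly. If it held, no auxiliary variable would be needed and ETD-MS2 would be unconditionally stable, contrary to the blow-ups in the accuracy test. Hence neither per-component bound, $\Vert\omega^{n+1}\Vert\le e^{-a}\Vert\omega^n\Vert+\cdots$ or $s^2\le e^{-\tau\gamma}s_n^2+\cdots$, is justified. For the same reason, the $Y\ge Z$/$Y<Z$ case split in Step~3 and your fallback with separate rates are unavailable. Both need a separate recursion for $\Vert\omega^n\Vert^2$ and for $|r^n+1|^2$, whereas only $E^n$ satisfies one.

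\textbf{The fix keeps all your ingredients.}
\begin{enumerate}
\item Work with the summed identity $\tilde{\gamma}\Vert\omega^{n+1}\Vert^2+s^2=\tilde{\gamma}\langle X,\omega^{n+1}\rangle+y\,s$, where $y:=\varphi_0(\tau\gamma)s_n+1-\varphi_0(\tau\gamma)$.
\item Instead of dividing, apply $ab\le\frac12(a^2+b^2)$ to both right-hand terms and absorb half of each left-hand term. This gives $E^{n+1}\le\tilde{\gamma}\Vert X\Vert^2+y^2$.
\item Your Fourier bound $\Vert X\Vert\le e^{-a}\Vert\omega^n\Vert+(1-e^{-a})\Vert f\Vert/(\nu\lambda_1)$, plus convexity applied to both $\Vert X\Vert$ and $y$, yields exactly $\tilde{\gamma}$ times your vorticity bound plus your scalar bound.
\item No case split is needed. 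Since $\Vert\omega^n\Vert^2,s_n^2\ge0$, use $e^{-a},e^{-\tau\gamma}\le e^{-\theta\tau}$ on the data terms and your $(1-e^{-x})/x$ estimates on the source terms. This gives $E^{n+1}\le e^{-\theta\tau}E^n+(1-e^{-\theta\tau})M$, which is precisely the paper's one-step inequality \eqref{eqn:gsav_contract}. Iterating finishes the proof.
\end{enumerate}

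\textbf{Comparison with the paper.} Repaired this way, your argument differs from the paper's only in how that one-step inequality is reached. The paper rewrites the scheme in Euler form and uses polarization identities. It then discards the nonnegative terms $\Vert\varphi_0^{1/2}(\tau\nu\mathcal L)(\omega^{n+1}-\omega^n)\Vert^2$ and $(1-\tau\gamma\varphi_1(\tau\gamma))|r^{n+1}-r^n|^2$, and uses $\Vert\omega^n\Vert^2-\tau\nu\Vert(\mathcal L\varphi_1(\tau\nu\mathcal L))^{1/2}\omega^n\Vert^2=\Vert\varphi_0^{1/2}(\tau\nu\mathcal L)\omega^n\Vert^2$. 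Working directly on the variation-of-constants form, as you do, is shorter.

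\textbf{One further omission.} The cancellation presupposes that $g$ is finite. That is where the hypothesis $\alpha>0$ and the paper's regularity/solvability argument enter, and your proposal does not address it.
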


\begin{proof}
We first show the solvability of the scheme.

Notice that $\omega\in H^\alpha$ implies $\psi\in H^{2+\alpha}$ by elliptic regularity. Therefore,  $\nabla^\perp\psi\in H^{1+\alpha}\subset L^\infty$. Hence $B(\psi,\omega)=\nabla \cdot(\nabla^\perp\psi \omega) \in V'$. Consequently, $\tilde{B}^{1+\frac{1}{2}}\in V'$ if $\omega^0,\omega^1\in H^\alpha$. 
This further implies that the intermediate values in computing $\omega^2$ are regular, i.e., $\omega^2_j\in V, j=1,2$ according to \eqref{eqn:step_1} and elliptic regularity. This leads to the finiteness of $A^2, B^2$ in \eqref{eqn:step_2}, and the solvability of $r^2$ in \eqref{eqn:cubic}. Therefore, ${\omega}^{2}\in V$ follows naturally from the linear combination in \eqref{eqn:step_4}. 
Hence, we can continue marching in time to solve for $\omega^3$ with $\alpha_{new}=\min\{\alpha, 1\}$. Consequently, the scheme can be solved step-by-step with $\omega^n\in V, \forall n\ge 2$.

Next, we derive the uniform-in-time bound on the numerical solution.
For this purpose, we need to carefully utilize the exponential functions associated with the ETD schemes as the ETD scheme does not contain any numerical dissipation from the linear part. This part of the proof is decomposed into six steps in order to enhance readability.

Step 1. [Reformulation of the scheme] We first rewrite the ETD-mr-ccSAV-MS2o scheme \eqref{eqn:mrGSAV_ETDMS2} as a variant of the Euler scheme using Lemma~\eqref{lem:phi_prop} (2) ($\varphi_0(z)=1-z\varphi_1(z)$) 
\begin{equation}\label{eqn:gsav_u_stab}
\begin{aligned}
\omega^{n+1}-\omega^n + \tau_{n+1}\nu\mathcal L\varphi_1(\tau_{n+1}\nu\mathcal L)\omega^n
= \tau_{n+1}\varphi_1(\tau_{n+1}\nu\mathcal L)\Bigl(f^{n+\frac12}-(1-(r^{n+1})^2){\tilde{B}^{n+\frac{1}{2}}}\Bigr),
\end{aligned}
\end{equation}
and
\begin{equation}\label{eqn:gsav_r_stab}
\begin{aligned}
&r^{n+1}-r^n+\tau_{n+1}\gamma\varphi_1(\tau_{n+1}\gamma)(r^n+1) \\
=&\tau_{n+1}(1-r^{n+1}){{\tilde{\gamma}}}\Bigl\langle \varphi_1(\tau_{n+1}\nu\mathcal L){\tilde{B}^{n+\frac{1}{2}}},\omega^{n+1}\Bigr\rangle
+\tau_{n+1}\gamma\varphi_1(\tau_{n+1}\gamma).
\end{aligned}
\end{equation}

Step 2. [Basic energy estimate on the reformulated scheme] We perform a standard energy estimate by considering $\int_\Omega\eqref{eqn:gsav_u_stab}\times 2{\tilde{\gamma}}\omega^{n+1} +
\eqref{eqn:gsav_r_stab}\times 2(r^{n+1}+1)$. Utilizing the identities $2(a-b,a)=\|a\|^2-\|b\|^2+\|a-b\|^2$ and
$2(a,b)=\|a\|^2+\|b\|^2-\|a-b\|^2$, and noticing that the nonlinear terms involving ${\tilde{B}^{n+\frac{1}{2}}}$ cancel since
\[
-(1-(r^{n+1})^2)+(1-r^{n+1})(r^{n+1}+1)=0,
\]
 we obtain
\begin{equation}\label{eqn:total_eqn}
    \begin{aligned}
        &{{\tilde{\gamma}}}\|\omega^{n+1}\|^{2} - {{\tilde{\gamma}}}\|\omega^{n}\|^{2} + {{\tilde{\gamma}}}\|\omega^{n+1} - \omega^{n}\|^{2}
        + {{\tilde{\gamma}}}\bigl\|(\tau_{n+1}\nu\mathcal L\varphi_1(\tau_{n+1}\nu\mathcal L))^{1/2}\omega^{n+1}\bigr\|^{2}
          \\&\quad
        +{{\tilde{\gamma}}}\bigl\|(\tau_{n+1}\nu\mathcal L\varphi_1(\tau_{n+1}\nu\mathcal L))^{1/2}\omega^{n}\bigr\|^{2}
        - {{\tilde{\gamma}}}\bigl\|(\tau_{n+1}\nu\mathcal L\varphi_1(\tau_{n+1}\nu\mathcal L))^{1/2}(\omega^{n+1}-\omega^{n})\bigr\|^{2} \\
        &\quad
        + |r^{n+1}+1|^{2} - |r^{n}+1|^{2} + |r^{n+1}-r^{n}|^{2}
        + \tau_{n+1}\gamma\varphi_1(\tau_{n+1}\gamma)|r^{n+1}+1|^{2}\\
        &\quad
        + \tau_{n+1}\gamma\varphi_1(\tau_{n+1}\gamma)|r^{n}+1|^{2}
        - \tau_{n+1}\gamma\varphi_1(\tau_{n+1}\gamma)|r^{n+1}-r^{n}|^{2} \\
        =&\; \Bigl\langle\frac{\mathcal{L}^{-1}}{\nu}\tau_{n+1}\nu\mathcal L\varphi_1(\tau_{n+1}\nu\mathcal L)f^{n+\frac12},\;2{{\tilde{\gamma}}}\omega^{n+1}\Bigr\rangle
        + \bigl\langle 2\tau_{n+1}\gamma\varphi_1(\tau_{n+1}\gamma),\;r^{n+1}+1\bigr\rangle.
    \end{aligned}
\end{equation}

Step 3. [Control of the destabilizing terms] We control the destabilizing terms, i.e., the sixth and twelfth terms on the left-hand side of \eqref{eqn:total_eqn} by the third and ninth terms on the left-hand side by utilizing Lemma~\eqref{lem:phi_prop} (2) and (3) ($\varphi_0(z)=1-z\varphi_1(z)$). More specifically, we have
\begin{align}
    &\|\omega^{n+1}-\omega^{n}\|^{2} 
    - \bigl\|(\tau_{n+1}\nu\mathcal{L}\varphi_{1}(\tau_{n+1}\nu\mathcal{L}))^{1/2} (\omega^{n+1}-\omega^{n})\bigr\|^{2} \nonumber \\
    &\quad = \|\varphi_0^{1/2}(\tau_{n+1}\nu\mathcal{L})(\omega^{n+1}-\omega^{n})\|^2 \ge 0, \label{eqn:omega_diff_nonneg} \\
    &|r^{n+1}-r^{n}|^{2} 
    - \tau_{n+1}\gamma\varphi_{1}(\tau_{n+1}\gamma)|r^{n+1}-r^{n}|^{2} \ge 0. \label{eqn:r_diff_nonneg}
\end{align}

Step 4. [Estimate of the right-hand side] We now estimate the right-hand side. For the first term, utilizing Cauchy--Schwarz, Young's and Poincar\'e inequalities, together with  the monotonicity of $\varphi_1$ and the fact that $\mathcal{L}\ge \lambda_1, \theta\le\nu\lambda_1$, we have
\ignore{
\begin{equation}\label{eqn:add_minus1}
\begin{aligned}
&\Bigl\langle\frac{\mathcal{L}^{-1}}{\nu}\,\tau_{n+1}\nu\mathcal{L}
             \varphi_{1}(\tau_{n+1}\nu\mathcal{L})f^{n+\frac12},\; 2{{\tilde{\gamma}}}\omega^{n+1}\Bigr\rangle \\
&\le {{\tilde{\gamma}}}\bigl\|(\tau_{n+1}\nu\mathcal{L}\varphi_{1}(\tau_{n+1}\nu\mathcal{L}))^{1/2}\omega^{n+1}\bigr\|^{2}
     + \frac{{{\tilde{\gamma}}}}{\nu^{2}}
       \bigl\|(\tau_{n+1}\nu\mathcal{L}\varphi_{1}(\tau_{n+1}\nu\mathcal{L}))^{1/2}
             \mathcal{L}^{-1}f^{n+\frac12}\bigr\|^{2} \\
&\le {{\tilde{\gamma}}}\bigl\|(\tau_{n+1}\nu\mathcal{L}\varphi_{1}(\tau_{n+1}\nu\mathcal{L}))^{1/2}\omega^{n+1}\bigr\|^{2}
     + {{\tilde{\gamma}}}\frac{\tau_{n+1}}{\nu}\|\mathcal{L}^{-\frac12} f^{n+\frac12}\|^{2}\\
  &\le {{\tilde{\gamma}}}\bigl\|(\tau_{n+1}\nu\mathcal{L}\varphi_{1}(\tau_{n+1}\nu\mathcal{L}))^{1/2}\omega^{n+1}\bigr\|^{2}
     + {{\tilde{\gamma}}}\frac{\tau_{n+1}}{\nu\lambda_1}\|f^{n+\frac12}\|^{2}.
\end{aligned}
\end{equation}
Similarly, we deduce the following bound on the second source term on the right hand side (RHS),
\begin{equation}\label{eqn:add_minus2}
\begin{aligned}
\bigl\langle 2\tau_{n+1}\gamma\varphi_{1}(\tau_{n+1}\gamma),\; r^{n+1}+1\bigr\rangle
&\le \tau_{n+1}\gamma\varphi_{1}(\tau_{n+1}\gamma)|r^{n+1}+1|^{2}
    + \tau_{n+1}\gamma\varphi_{1}(\tau_{n+1}\gamma) \\
&\le \tau_{n+1}\gamma\varphi_{1}(\tau_{n+1}\gamma)|r^{n+1}+1|^{2}
    + \tau_{n+1}\gamma .
\end{aligned}
\end{equation}

Substituting \eqref{eqn:add_minus1} and \eqref{eqn:add_minus2} into \eqref{eqn:total_eqn}, 
the terms
$\tau_{n+1}\gamma\varphi_{1}(\tau_{n+1}\gamma)|r^{n+1}+1|^{2}$ and
${{\tilde{\gamma}}}\bigl\|(\tau_{n+1}\nu\mathcal{L}\varphi_{1}(\tau_{n+1}\nu\mathcal{L}))^{1/2}\omega^{n+1}\bigr\|^{2}$
appear on both sides and hence cancel each other.
In addition, Lemma~\ref{lem:phi_prop} implies that the following differences are non-negative:
\[
{{\tilde{\gamma}}}\|\omega^{n+1}-\omega^{n}\|^{2}
-{{\tilde{\gamma}}}\bigl\|(\tau_{n+1}\nu\mathcal{L}\varphi_{1}(\tau_{n+1}\nu\mathcal{L}))^{1/2}
       (\omega^{n+1}-\omega^{n})\bigr\|^{2}\ge 0,
\]
\[
|r^{n+1}-r^{n}|^{2}
-\tau_{n+1}\gamma\varphi_{1}(\tau_{n+1}\gamma)|r^{n+1}-r^{n}|^{2}\ge 0 .
\]
Consequently, \eqref{eqn:total_eqn} reduces to
\begin{align}\label{eqn:gsav_energy_step}
{{\tilde{\gamma}}}\|\omega^{n+1}\|^2+|r^{n+1}+1|^2
&\le {{\tilde{\gamma}}}\|\omega^n\|^2+|r^n+1|^2
-{{\tilde{\gamma}}}\tau_{n+1}\nu\Bigl\|(\mathcal L\varphi_1(\tau_{n+1}\nu\mathcal L))^{\frac12}\omega^{n}\Bigr\|^2\\
&\quad -\tau_{n+1}\gamma\,\varphi_1(\tau_{n+1}\gamma)\,|r^n+1|^2 \nonumber
+\tau_{n+1}\Bigl(\frac{{{\tilde{\gamma}}}}{\nu\lambda_1}\|f^{n+\frac12}\|^2+\gamma\Bigr).
\end{align}

\medskip
In order to derive the desired estimate that includes an exponential decay estimate of the impact of the initial data, 
we utilize Lemma~\ref{lem:inv_opest}, cf.\ \eqref{eqn:op_est_1} with $\varphi_0(z)=e^{-z}$, and we have
\begin{equation}\label{eqn:u_decay_insert}
\begin{aligned}
&{{\tilde{\gamma}}}\|\omega^n\|^2-{{\tilde{\gamma}}}\tau_{n+1}\nu\Bigl\|(\mathcal L\varphi_1(\tau_{n+1}\nu\mathcal L))^{\frac12}\omega^{n}\Bigr\|^2\\
=&{{\tilde{\gamma}}}\bigl\|(\varphi_0(\tau_{n+1}\nu\mathcal L))^{\frac12}\omega^n\bigr\|^2
={{\tilde{\gamma}}}\|e^{-\tau_{n+1}\nu\mathcal L/2}\omega^n\|^2
\le {{\tilde{\gamma}}}e^{-\tau_{n+1}\nu\lambda_1}\|\omega^n\|^2.
\end{aligned}
\end{equation}
Likewise,
\begin{equation}\label{eqn:r_decay_insert}
|r^n+1|^2-\tau_{n+1}\gamma\,\varphi_1(\tau_{n+1}\gamma)\,|r^n+1|^2
=\varphi_0(\tau_{n+1}\gamma)\,|r^n+1|^2
=e^{-\tau_{n+1}\gamma}|r^n+1|^2.
\end{equation}
Combining \eqref{eqn:gsav_energy_step}--\eqref{eqn:r_decay_insert} and setting
$\theta=\min\{\nu\lambda_1,\gamma\}$ yields the one-step inequality
\begin{equation}\label{eqn:gsav_contract}
{{\tilde{\gamma}}}\|\omega^{n+1}\|^2+|r^{n+1}+1|^2
\le e^{-\theta\tau_{n+1}}\bigl({{\tilde{\gamma}}}\|\omega^n\|^2+|r^n+1|^2\bigr)
+\tau_{n+1}\Bigl(\frac{{{\tilde{\gamma}}}}{\nu\lambda_1}\|f^{n+\frac12}\|^2+\gamma\Bigr).
\end{equation}

Iterating \eqref{eqn:gsav_contract} and using $\|f^{k+\frac12}\|\le \|f\|_{L^\infty(0,\infty;H)}$, we obtain
\begin{equation}
\begin{aligned}
{{\tilde{\gamma}}}\|\omega^{n+1}\|^2+|r^{n+1}+1|^2
\le\;& e^{-\theta\sum_{i=1}^{n}\tau_{i+1}}({{\tilde{\gamma}}}\|\omega^{1}\|^2+|r^{1}+1|^2) \\
&\quad + \sum_{i=1}^{n} e^{-\theta\sum_{j=i+1}^{n}\tau_{j+1}} \tau_{i+1}
\Bigl(\frac{{{\tilde{\gamma}}}}{\nu\lambda_1}\|f\|_{L^\infty(0,\infty; H)}^2+\gamma\Bigr).
\end{aligned}
\end{equation}
Denote $S^n_i = \sum_{j=i}^{n}\tau_{j+1}$. Then $0=S^n_{n+1}<S^n_n=\tau_{n+1}<\cdots<S^n_1=\sum_{j=1}^n\tau_{j+1}$, so $\{S^n_j\}_{j=1}^{n}$ forms a partition of $[0,S^n_1]$. Moreover, $\tau_{j+1}=S^n_j-S^n_{j+1}$ and $e^{-\theta s}$ is decreasing in $s$. Hence,
\begin{equation}
\begin{aligned}
\sum_{i=1}^{n} e^{-\theta\sum_{j=i+1}^{n}\tau_{j+1}} \tau_{i+1}
=& \sum_{i=1}^{n}  e^{\theta \tau_{n+1}} e^{-\theta S^n_{i}}(S^n_{i} - S^n_{i+1}) \\
\le & e^{\theta \tau_{*}}\int_0^{S^n_1} e^{-\theta x}\,dx 
= \frac{e^{\theta \tau_{*}}(1-e^{-\theta S^n_1})}{\theta},
\end{aligned}
\end{equation}
where we used the fact that the right-end Riemann sum of a monotonically decreasing function is bounded by the corresponding integral. Consequently, we obtain \eqref{eqn:L2_estimate_gsav}, which completes the proof.
\end{proof}

\begin{theorem}\label{thm:etd_gsav_stability}
Assume $(\omega^i,r^i)\in (\dot{ H}^\alpha_{per}(\Omega)\cap  H)\times\mathbb{R}$ for $i=0,1$ with $\alpha > 0$, and $f\in L^\infty(0,\infty; H)$.
Let
\begin{equation}\label{theta}
\theta := \min\{\nu\lambda_1,\gamma\}>0.
\end{equation}
Then for all $n\ge1$, the ETD-mr-ccSAV-MS2o scheme is long-time stable in the sense that
\begin{equation}\label{eqn:L2_estimate_gsav}
{{\tilde{\gamma}}}\Vert \omega^{n+1} \Vert^2 + |r^{n+1}+1|^2
\le e^{-\theta \sum_{i=1}^n\tau_{i+1}}\bigl({{\tilde{\gamma}}}\Vert \omega^1\Vert^2 + |r^1+1|^2\bigr)
\end{equation}
\end{theorem}

\begin{proof}
We first show the well-posedness of the scheme.

Notice that $\omega^0,\omega^1\in \dot{H}^\alpha_{per}$ with $\alpha> 0$ implies  $\tilde{B}^{n+\frac{1}{2}}\in V'$ since $\nabla^\perp(-\Delta)^{-1}\omega = \nabla^\perp\psi\in L^\infty$ and $B(\psi,\omega)\in L^2$ if $\omega\in H^\alpha$. Hence,  ${\omega}^{2}\in V\subset \dot{H}^\alpha_{per}$ by elliptic regularity. Consequently, the scheme can be solved step-by-step with $\omega^n\in V, \forall n\ge 2$.

Notice that
\[
e^{-\tau_{n+1}\nu\mathcal L}-I=-\tau_{n+1}\nu\mathcal L\,\varphi_1(\tau_{n+1}\nu\mathcal L),
\qquad
e^{-\tau_{n+1}\gamma}-1=-\tau_{n+1}\gamma\,\varphi_1(\tau_{n+1}\gamma),
\]
the ETD-mr-ccSAV-MS2o scheme \eqref{eqn:mrGSAV_ETDMS2} can be rewritten as
\begin{equation}\label{eqn:gsav_u_stab}
\begin{aligned}
&\omega^{n+1}-\omega^n + \tau_{n+1}\nu\mathcal L\varphi_1(\tau_{n+1}\nu\mathcal L)\omega^n\\
=& \tau_{n+1}\varphi_1(\tau_{n+1}\nu\mathcal L)\Bigl(f^{n+\frac12}-(1-(r^{n+1})^2){\tilde{B}^{n+\frac{1}{2}}}\Bigr),
\end{aligned}
\end{equation}
and
\begin{equation}\label{eqn:gsav_r_stab}
\begin{aligned}
&r^{n+1}-r^n+\tau_{n+1}\gamma\varphi_1(\tau_{n+1}\gamma)(r^n+1) \\
=&\tau_{n+1}(1-r^{n+1}){{\tilde{\gamma}}}\Bigl\langle \varphi_1(\tau_{n+1}\nu\mathcal L){\tilde{B}^{n+\frac{1}{2}}},\omega^{n+1}\Bigr\rangle
+\tau_{n+1}\gamma\varphi_1(\tau_{n+1}\gamma).
\end{aligned}
\end{equation}

Considering $\int_\Omega\eqref{eqn:gsav_u_stab}\times 2{{\tilde{\gamma}}}\omega^{n+1} +
(\eqref{eqn:gsav_r_stab}, 2(r^{n+1}+1))$, utilizing the identities $2(a-b,a)=\|a\|^2-\|b\|^2+\|a-b\|^2$ and
$2(a,b)=\|a\|^2+\|b\|^2-\|a-b\|^2$, and noticing that the nonlinear term involving ${\tilde{B}^{n+\frac{1}{2}}}$ cancels since
\[
-(1-(r^{n+1})^2)+(1-r^{n+1})(r^{n+1}+1)=0,
\]
 we obtain
\begin{equation}\label{eqn:total_eqn}
    \begin{aligned}
        &{{\tilde{\gamma}}}\|\omega^{n+1}\|^{2} - {{\tilde{\gamma}}}\|\omega^{n}\|^{2} + {{\tilde{\gamma}}}\|\omega^{n+1} - \omega^{n}\|^{2}
        + {{\tilde{\gamma}}}\bigl\|(\tau_{n+1}\nu\mathcal L\varphi_1(\tau_{n+1}\nu\mathcal L))^{1/2}\omega^{n+1}\bigr\|^{2}
          \\&\quad
        +{{\tilde{\gamma}}}\bigl\|(\tau_{n+1}\nu\mathcal L\varphi_1(\tau_{n+1}\nu\mathcal L))^{1/2}\omega^{n}\bigr\|^{2}
        - {{\tilde{\gamma}}}\bigl\|(\tau_{n+1}\nu\mathcal L\varphi_1(\tau_{n+1}\nu\mathcal L))^{1/2}(\omega^{n+1}-\omega^{n})\bigr\|^{2} \\
        &\quad
        + |r^{n+1}+1|^{2} - |r^{n}+1|^{2} + |r^{n+1}-r^{n}|^{2}
        + \tau_{n+1}\gamma\varphi_1(\tau_{n+1}\gamma)|r^{n+1}+1|^{2}\\
        &\quad
        + \tau_{n+1}\gamma\varphi_1(\tau_{n+1}\gamma)|r^{n}+1|^{2}
        - \tau_{n+1}\gamma\varphi_1(\tau_{n+1}\gamma)|r^{n+1}-r^{n}|^{2} \\
        =&\; \Bigl\langle\frac{\mathcal{L}^{-1}}{\nu}\tau_{n+1}\nu\mathcal L\varphi_1(\tau_{n+1}\nu\mathcal L)f^{n+\frac12},\;2{{\tilde{\gamma}}}\omega^{n+1}\Bigr\rangle
        + \bigl\langle 2\tau_{n+1}\gamma\varphi_1(\tau_{n+1}\gamma),\;r^{n+1}+1\bigr\rangle.
    \end{aligned}
\end{equation}

We now estimate the right-hand side. For the first term, utilizing Cauchy--Schwarz, Young's and Poincar\'e inequalities, and the fact that $\varphi_1(z)$ is monotonically decreasing, we have
}
\begin{equation}\label{eqn:add_minus1}
\begin{aligned}
&\Bigl\langle\frac{\mathcal{L}^{-1}}{\nu}\,\tau_{n+1}\nu\mathcal{L}
             \varphi_{1}(\tau_{n+1}\nu\mathcal{L})f^{n+\frac12},\; 2{{\tilde{\gamma}}}\omega^{n+1}\Bigr\rangle \\
&\le {{\tilde{\gamma}}}\bigl\|(\tau_{n+1}\nu\mathcal{L}\varphi_{1}(\tau_{n+1}\nu\mathcal{L}))^{1/2}\omega^{n+1}\bigr\|^{2}
     + \frac{{{\tilde{\gamma}}}}{\nu^{2}}
       \bigl\|(\tau_{n+1}\nu\mathcal{L}\varphi_{1}(\tau_{n+1}\nu\mathcal{L}))^{1/2}
             \mathcal{L}^{-1}f^{n+\frac12}\bigr\|^{2} \\
&\le {{\tilde{\gamma}}}\bigl\|(\tau_{n+1}\nu\mathcal{L}\varphi_{1}(\tau_{n+1}\nu\mathcal{L}))^{1/2}\omega^{n+1}\bigr\|^{2}
     + {{\tilde{\gamma}}}\frac{\tau_{n+1}}{\nu}\varphi_{1}(\tau_{n+1}\nu\lambda_1)\|\mathcal{L}^{-\frac12}f^{n+\frac12}\|^{2} \\
&\le {{\tilde{\gamma}}}\bigl\|(\tau_{n+1}\nu\mathcal{L}\varphi_{1}(\tau_{n+1}\nu\mathcal{L}))^{1/2}\omega^{n+1}\bigr\|^{2}
     + {{\tilde{\gamma}}}\frac{\tau_{n+1}}{\nu\lambda_1}\varphi_{1}(\tau_{n+1}\theta)\|f^{n+\frac12}\|^{2} \\
&= {{\tilde{\gamma}}}\bigl\|(\tau_{n+1}\nu\mathcal{L}\varphi_{1}(\tau_{n+1}\nu\mathcal{L}))^{1/2}\omega^{n+1}\bigr\|^{2}
     + {{\tilde{\gamma}}}\frac{1-e^{-\theta\tau_{n+1}}}{\theta\nu\lambda_1}\|f^{n+\frac12}\|^{2}.
\end{aligned}
\end{equation}
Similarly, we deduce the following bound on the second source term on the RHS,
\begin{equation}\label{eqn:add_minus2}
\begin{aligned}
\bigl\langle 2\tau_{n+1}\gamma\varphi_{1}(\tau_{n+1}\gamma),\; r^{n+1}+1\bigr\rangle
&\le \tau_{n+1}\gamma\varphi_{1}(\tau_{n+1}\gamma)|r^{n+1}+1|^{2}
    + \tau_{n+1}\gamma\varphi_{1}(\tau_{n+1}\gamma) \\
&\le \tau_{n+1}\gamma\varphi_{1}(\tau_{n+1}\gamma)|r^{n+1}+1|^{2}
    + \gamma\frac{1-e^{-\theta\tau_{n+1}}}{\theta}.
\end{aligned}
\end{equation}

Step 5. [Simplified one-step energy inequality] Inserting  \eqref{eqn:add_minus1}-\eqref{eqn:add_minus2} into \eqref{eqn:total_eqn}, utilizing the cancellation of the two terms $\tau_{n+1}\gamma\varphi_{1}(\tau_{n+1}\gamma)|r^{n+1}+1|^{2}$ along with
${{\tilde{\gamma}}}\bigl\|(\tau_{n+1}\nu\mathcal{L}\varphi_{1}(\tau_{n+1}\nu\mathcal{L}))^{1/2}\omega^{n+1}\bigr\|^{2}$
since they appear on both sides of the inequality, and discarding the non-negative difference terms on the left-hand side of \eqref{eqn:total_eqn} according to \eqref{eqn:omega_diff_nonneg} and \eqref{eqn:r_diff_nonneg}, we deduce
\begin{multline}\label{eqn:gsav_energy_step}
{{\tilde{\gamma}}}\|\omega^{n+1}\|^2+|r^{n+1}+1|^2
\le {{\tilde{\gamma}}}\|\omega^n\|^2+|r^n+1|^2
-{{\tilde{\gamma}}}\tau_{n+1}\nu\Bigl\|(\mathcal L\varphi_1(\tau_{n+1}\nu\mathcal L))^{\frac12}\omega^{n}\Bigr\|^2 \\
-\tau_{n+1}\gamma\varphi_1(\tau_{n+1}\gamma)|r^n+1|^2
+ \frac{1-e^{-\theta\tau_{n+1}}}{\theta}\Bigl(\frac{{{\tilde{\gamma}}}}{\nu\lambda_1}\|f^{n+\frac12}\|^2+\gamma\Bigr).
\end{multline}
Recall that $1-z\varphi_1(z)=\varphi_0(z)$, we have, since $\theta=\min\{\nu\lambda_1,\gamma\}$,
\begin{equation}\label{eqn:u_decay_insert}
\begin{aligned}
&{{\tilde{\gamma}}}\|\omega^n\|^2-{{\tilde{\gamma}}}\tau_{n+1}\nu\Bigl\|(\mathcal L\varphi_1(\tau_{n+1}\nu\mathcal L))^{\frac12}\omega^{n}\Bigr\|^2\\
=&{{\tilde{\gamma}}}\bigl\|(\varphi_0(\tau_{n+1}\nu\mathcal L))^{\frac12}\omega^n\bigr\|^2
={{\tilde{\gamma}}}\|e^{-\tau_{n+1}\nu\mathcal L/2}\omega^n\|^2
\le {{\tilde{\gamma}}}e^{-\tau_{n+1}\nu\lambda_1}\|\omega^n\|^2
\le {{\tilde{\gamma}}}e^{-\tau_{n+1}\theta}\|\omega^n\|^2.
\end{aligned}
\end{equation}
Likewise,
\begin{equation}\label{eqn:r_decay_insert}
\begin{aligned}
&|r^n+1|^2-\tau_{n+1}\gamma\,\varphi_1(\tau_{n+1}\gamma)\,|r^n+1|^2\\
=&\varphi_0(\tau_{n+1}\gamma)\,|r^n+1|^2 =e^{-\tau_{n+1}\gamma}|r^n+1|^2 \le e^{-\tau_{n+1}\theta}|r^n+1|^2.
\end{aligned}
\end{equation}

Combining \eqref{eqn:gsav_energy_step}--\eqref{eqn:r_decay_insert}, we obtain the following one-step inequality
\begin{equation}\label{eqn:gsav_contract}
{{\tilde{\gamma}}}\|\omega^{n+1}\|^2+|r^{n+1}+1|^2
\le e^{-\theta\tau_{n+1}}\bigl({{\tilde{\gamma}}}\|\omega^n\|^2+|r^n+1|^2\bigr)
+\frac{1-e^{-\theta\tau_{n+1}}}{\theta}\Bigl(\frac{{{\tilde{\gamma}}}}{\nu\lambda_1}\|f^{n+\frac12}\|^2+\gamma\Bigr).
\end{equation}

Step 6. [The final estimate via induction]
Since $\|f^{k+\frac12}\|^2\le \|f\|_{L^\infty(0,\infty;H)}^2$, with the help of  \eqref{eqn:gsav_contract}, one can  verify via mathematical induction on $n$ the following inequality:
\begin{equation}\label{eqn:iteration_sum}
\begin{aligned}
{{\tilde{\gamma}}}\|\omega^{n+1}\|^2+|r^{n+1}+1|^2
&\le\; e^{-\theta\sum_{i=1}^{n}\tau_{i+1}}({{\tilde{\gamma}}}\|\omega^{1}\|^2+|r^{1}+1|^2) \\
& + \frac{1}{\theta}\Bigl(\frac{{{\tilde{\gamma}}}}{\nu\lambda_1}\|f\|_{L^\infty(0,\infty; H)}^2+\gamma\Bigr)\bigl(1-e^{-\theta\sum_{i=1}^{n}\tau_{i+1}} \bigr).
\end{aligned}
\end{equation}
We derive \eqref{eqn:L2_estimate_gsav} once we ignore $e^{-\theta\sum_{i=1}^{n}\tau_{i+1}} $ in the last term. This completes the proof.
\end{proof}

\begin{rem}
The case with $\alpha=0$ can be handled by invoking elliptic regularity with a source term in $W^{-1,p}, p\in (1, 2)$.
\end{rem}

\section{Numerical experiments}\label{sec:5}

In this section, we report the results of several numerical experiments that illustrate (1) the temporal accuracy, (2) the stability advantage over classical schemes, (3) the long-time stability, and (4) the practical performance of the proposed adaptive scheme, including its automatic time-step control capability.


All simulations are performed on a periodic box $\Omega=(0,L)\times(0,L)$. 
A Fourier spectral method with 256 modes in each direction is used for spatial discretization. Since the solutions to the NSE are analytic in space for positive time under spatially analytic forcing \cite{FoiasTemam1989}, the spatial discretization error is negligible when compared to the temporal discretization error due to spectral accuracy.

To characterize each flow regime, we use the Reynolds number defined as
$    \mathrm{Re} := \frac{LU}{\nu} = \frac{\|\bm{u}\|}{\nu}.$
Here,  $L$ as the characteristic length, and  $U=\|\bm{u}\|/L$. To monitor the flow dynamics and assess long-time stability, we report the enstrophy
\begin{equation}\label{eqn:enstrophy_def}
    \mathcal{E}(t) := \frac{\|\omega(\cdot,t)\|^2}{2}.
\end{equation}

\subsection{Second-order accuracy under fixed and variable time steps}
Here we confirm the second-order accuracy of our second-order ETD-mr-ccSAV-MS2 scheme under fixed and variable step sizes. We also compare its performance with that of the ETD-MS2 scheme \eqref{ETD-MS2} \cite{zhang1987schema}. 

\ignore{
\begin{exm}\label{exm:conv_test}[Accuracy test]
In this example, $\Omega = (0,2\pi)^2$, $\nu = 10^{-4}$, and the final time is $T = 1$. The initial condition is specified as
\begin{equation*}
u_0 = 0.2 \sin(4 y) \cos(2x),\quad v_0 = -0.1\sin(2x) \cos(4y),\quad r_0 = 0,
\end{equation*}
and the external force is given by
\begin{equation*}
\bm f(x,y) =
\begin{pmatrix}
0 \\
\sin(x)
\end{pmatrix}.
\end{equation*}
For spatial discretization, $256$ Fourier modes are used. To evaluate accuracy and efficiency, we take as a reference solution the result produced by the ETDRK4 scheme \cite{kassam2005fourth} with a uniform time step $\tau = 0.1\times 2^{-10}\approx 10^{-4}$.
\end{exm}
We set the mean-reverting parameter $\gamma = 100$ and solve the problem using the ETD-mr-ccSAV-MS1o and ETD-mr-ccSAV-MS2o schemes.
We first consider uniform time steps. Specifically, we perform simulations with $\tau = 0.1\times 2^{-k}$, where $k=0,1,\dots,6$, and compute the errors of the velocity, vorticity, and auxiliary variable at the final time $T=1$. The corresponding results are reported in Figure~\ref{fig:convergency}.

From Figure~\ref{fig:convergency}, we observe that both schemes achieve the expected temporal convergence rates for the $L^2$ errors of the velocity and vorticity. For the auxiliary variable error (measured by $|r|$), the observed convergence behavior is consistent with the design of the schemes: the first- and second-order methods exhibit convergence rates matching their formal orders.

\begin{figure}[htbp]
\centering
\includegraphics[width=\linewidth]{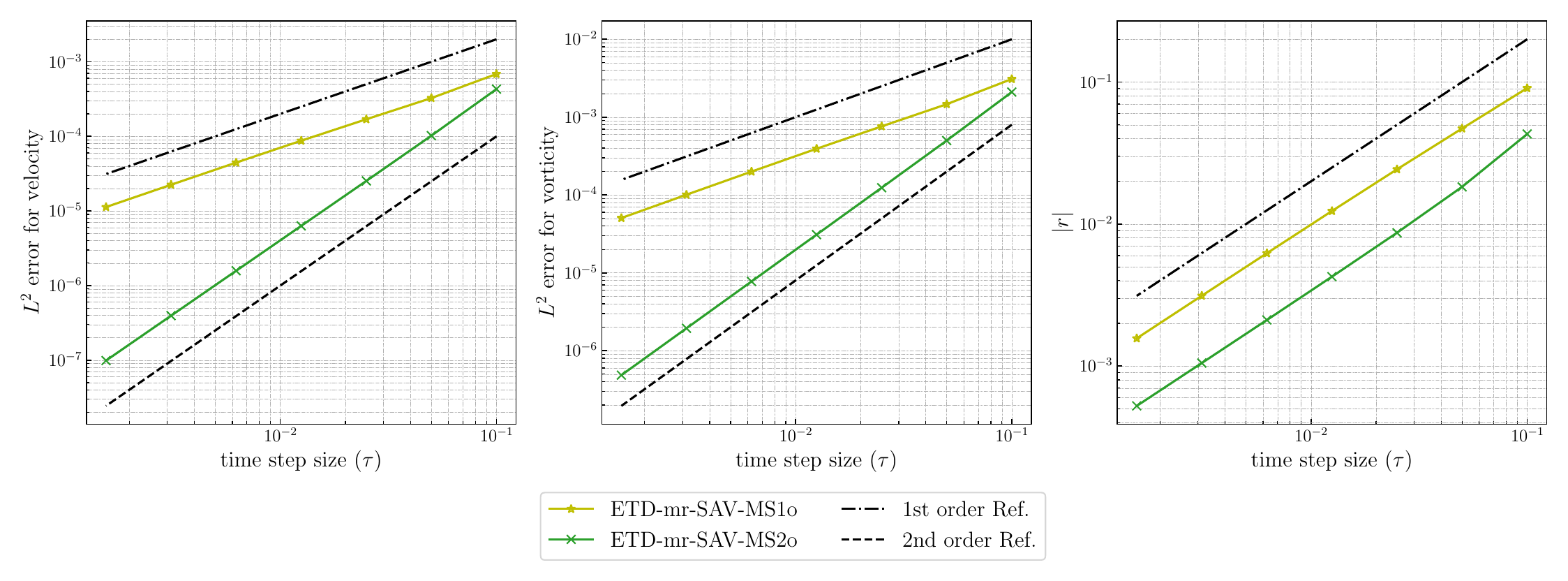}
\caption{The $L^2$ error of velocity (left), vorticity (middle), and absolute error of the auxiliary variable (right), computed by ETD-mr-SAV-MS1o and ETD-mr-ccSAV-MS2o at $T=1$, plotted against the time-step size $\tau = 0.1\times 2^{-k}$, for $k = 0,1,\cdots,6$.}
\label{fig:convergency}
\end{figure}

Next, we examine accuracy under variable time stepping. Following \cite{chen2019variable}, we generate a variable-step sequence $\{\tau_n\}$ by applying a $10\%$ perturbation to the uniform step size $1/N$. Errors are again computed at the final time $T=1$, and the results are presented in Figure~\ref{fig:convergency_rs}. The figure indicates that the proposed schemes remain robust under variable time steps, and the observed convergence order is consistent with that obtained under uniform time stepping.

\begin{figure}[htbp]
\centering
\includegraphics[width=\linewidth]{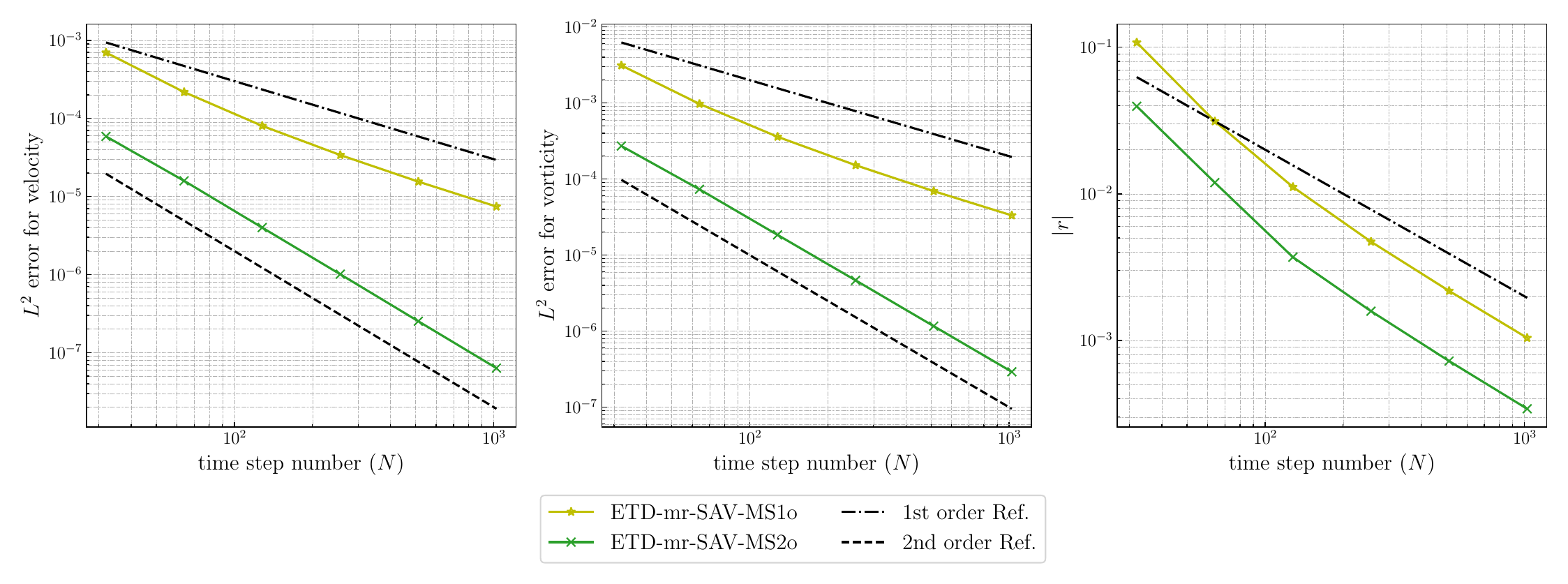}
\caption{The $L^2$ error of velocity (left), vorticity (middle), and absolute error of the auxiliary variable (right), computed by ETD-mr-SAV-MS1o and ETD-mr-ccSAV-MS2o at $T=1$, plotted against the number of time steps $N=2^k$ under a $10\%$ perturbed variable-step sequence, for $k=5,\dots,10$.}
\label{fig:convergency_rs}
\end{figure}
}

\ignore{
\begin{exm}\label{exm:kol_conv_test}[Accuracy test]
In this example, $\Omega = (0,2\pi)^2$, $\nu = 1/50$, and the final time is $T = 1$. The initial condition is generated from an isotropic Fourier perturbation of the stream function
\begin{equation}\label{eqn:iso_pertbation}
\psi_{\varepsilon}(x,y) = \varepsilon \sum_{\substack{\bm{k}=(k_1,k_2)\in\mathbb{Z}^2 \\ 0<|\bm{k}|\leq 10}} \frac{1}{|\bm{k}|^3} \bigl[\cos(k_1 x)+\sin(k_1 x)\bigr]\bigl[\cos(k_2 y)+\sin(k_2 y)\bigr],
\end{equation}
with $\varepsilon = 2.5$. The corresponding vorticity field is evolved to a developed state and taken as the initial condition $\omega_0$, with the auxiliary variable initialized as $r_0 = 0$. The initial Reynolds number is approximately $1198$ based on the $L^2$ norm of the initial velocity derived from $\psi_\varepsilon$, placing the flow in a moderately turbulent regime. The forcing term in the vorticity equation is given by
\begin{equation*}
f(x,y) = \cos(x).
\end{equation*}
For spatial discretization, $256$ Fourier modes are used. To evaluate accuracy, we take as a reference solution the result produced by the ETDRK4 scheme \cite{kassam2005fourth} with a uniform time step $\tau = 0.01\times 2^{-8}\approx 3.9\times 10^{-5}$.
\end{exm}

We set the mean-reverting parameter $\gamma = 1000$ and $\tilde{\gamma} = 0.1$ and solve the problem using the ETD-mr-ccSAV-MS2o, mr-SAV-BDF2, and ETD-MS2 schemes. We consider uniform time steps $\tau = 0.01\times 2^{-k}$, where $k= 2, 3, \dots, 8$, and compute the errors of the velocity, vorticity, and auxiliary variable at the final time $T = 1$. The corresponding results are reported in Figure~\ref{fig:convergency}.

From Figure~\ref{fig:convergency}, we observe that the ETD-mr-ccSAV-MS2o scheme remains stable across all tested time steps, while the ETD-MS2 scheme---which lacks the mean-reverting ccSAV stabilization---suffers numerical blow-up at the step size $\tau = 2.5\times 10^{-3}$. The mr-SAV-BDF2 scheme is unconditionally stable but exhibits larger errors than the ETD-mr-ccSAV-MS2o scheme. For the $L^2$ errors of the velocity and vorticity, both the ETD-mr-ccSAV-MS2o, mr-SAV-BDF2 and ETD-MS2 schemes achieve the expected second-order temporal convergence under sufficiently finer time steps.
For the auxiliary variable $|r|$, the ETD-mr-ccSAV-MS2o scheme exhibits first-order convergence, while the mr-SAV-BDF2 scheme achieves second-order convergence, reflecting the different treatments of the auxiliary variable in the two formulations. 
These results confirm the expected temporal accuracy of the proposed schemes and highlight the enhanced stability provided by the mean-reverting ccSAV framework, which prevents blow-up at time steps where the standard ETD-MS2 scheme fails.

\begin{figure}[htbp]
\centering
\includegraphics[width=\linewidth]{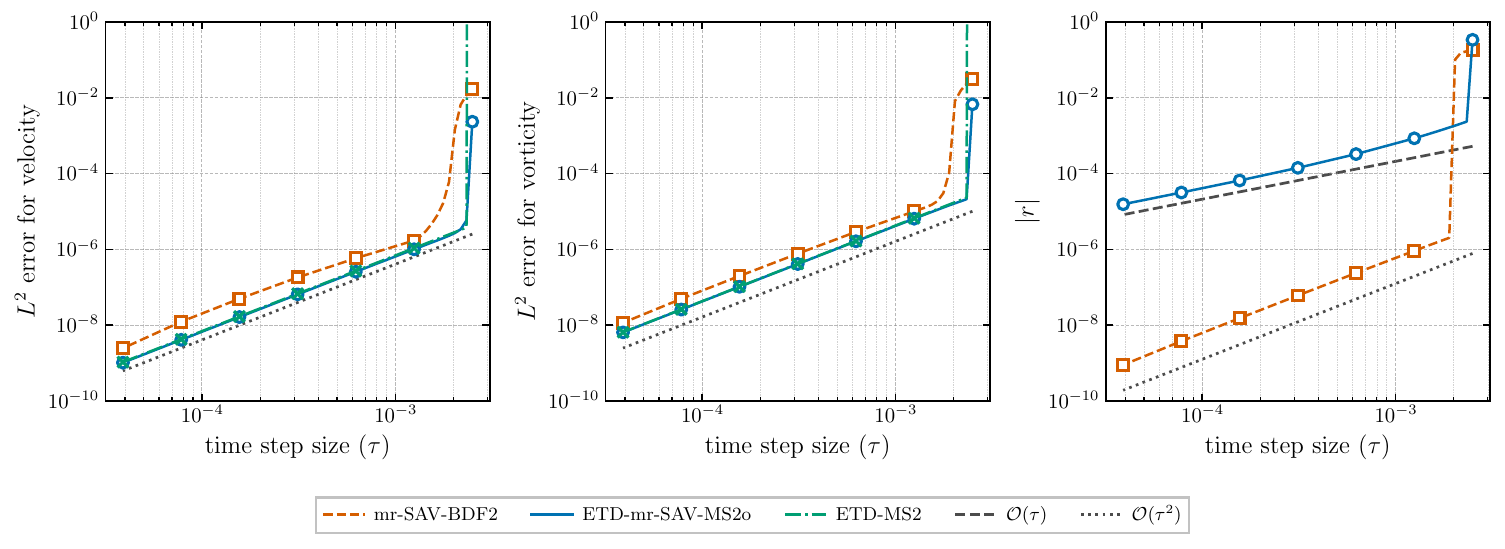}
\caption{The $L^2$ error of velocity (left), vorticity (middle), and absolute error of the auxiliary variable (right), computed by ETD-mr-ccSAV-MS2o, ETD-MS2, and mr-SAV-BDF2 at $T = 1$, plotted against the time-step size $\tau = 0.01\times 2^{-k}$, for $k = 2, 3, \dots, 8$.}
\label{fig:convergency}
\end{figure}
}

{%
\begin{exm}[Accuracy test]\label{exm:kol_conv_test}
In this example, 
 we set $\nu = 1/50$, $f(x,y) = \cos(x)$, and we set the final time to $T = 1$. In addition,  $\gamma = 1000$ and $\tilde{\gamma} = 0.1$.
Inspired by the Kraichnan direct enstrophy cascade spectrum, the initial streamfunction is set to 
\begin{equation}\label{eqn:iso_pertbation}
\psi_{\varepsilon}(x,y) = \varepsilon \sum_{\substack{\bm{k}=(k_1,k_2)\in\mathbb{Z}^2 \\ 0<|\bm{k}|\leq 10}} \frac{1}{|\bm{k}|^3} \bigl[\cos(k_1 x)+\sin(k_1 x)\bigr]\bigl[\cos(k_2 y)+\sin(k_2 y)\bigr],
\end{equation}
with $\varepsilon = 2.5$. 
The initial auxiliary variable is set to its theoretical value, i.e., $r^0 = 0$. 
The initial Reynolds number 
is approximately 1198, rendering this test a moderate-to-large Reynolds number test. 
A reference solution is generated by the well-established ETDRK4 scheme \cite{kassam2005fourth} with a uniform time step of $\tau = 0.01\times 2^{-8}$.
Both ETD-mr-ccSAV-MS2 and ETD-MS2 are initialized with ETDRK4 in order to generate $\omega^1$.
\end{exm}

We first consider the fixed-step case. More specifically, we set $\tau = 0.01\times 2^{-k}$ with $k=2,3,\dots,8$, together with five uniformly spaced intermediate values between $k=2$ and $k=3$ in order to better resolve the \texttt{pre-asymp}totic regime, and to investigate the numerical blow-up of the classical ETD-MS2 scheme.

\begin{table}[htbp]
\label{tab:convergence_comparison}
\centering
\scriptsize
\caption{Velocity $L^2$ errors and convergence rates for $\nu=1/50$ ($Re_{in}\approx 1198$). 
}
\resizebox{\textwidth}{!}{
\begin{tabular}{l cc cc cc cc}
\toprule
& \multicolumn{4}{c}{$T=1.0$} & \multicolumn{4}{c}{$T=0.1$}\\
\cmidrule(lr){2-5}\cmidrule(lr){6-9}
& \multicolumn{2}{c}{ETD-MS2} & \multicolumn{2}{c}{ETD-mr-ccSAV-MS2o} & \multicolumn{2}{c}{ETD-MS2} & \multicolumn{2}{c}{ETD-mr-ccSAV-MS2o}\\
\cmidrule(lr){2-3}\cmidrule(lr){4-5}\cmidrule(lr){6-7}\cmidrule(lr){8-9}
Step size $\tau$ & Error & Rate & Error & Rate & Error & Rate & Error & Rate\\
\midrule
{5.00e-3}  & \texttt{NaN} & -- & {4.82e-2} & --    & {1.69e5}  & --    & {3.50e-3} & -- \\
{4.50e-3}  & \texttt{NaN} & -- & {4.33e-2} & \texttt{pre-asymp}  & {6.13e3}  & \texttt{pre-asymp} & {3.14e-3} & \texttt{pre-asymp} \\
{4.00e-3}  & \texttt{NaN} & -- & {3.68e-2} & \texttt{pre-asymp}  & {2.58}    & \texttt{pre-asymp} & {2.60e-3} & \texttt{pre-asymp} \\
{3.50e-3}  & \texttt{NaN} & -- & {2.85e-2} & \texttt{pre-asymp}  & {1.13e-1} & \texttt{pre-asymp} & {2.05e-3} & \texttt{pre-asymp} \\
{3.00e-3}  & \texttt{NaN} & -- & {1.70e-2} & \texttt{pre-asymp}  & {5.42e-3} & \texttt{pre-asymp} & {1.13e-3} & \texttt{pre-asymp} \\
{2.60e-3}* & \texttt{NaN} & -- & {5.75e-3} & \texttt{pre-asymp}  & {2.18e-4} & \texttt{pre-asymp} & {2.63e-4} & \texttt{pre-asymp} \\
{2.50e-3}  & {1.91e57} & --     & {2.35e-3} & \texttt{pre-asymp} & {6.61e-5} & \texttt{pre-asymp} & {6.75e-5} & \texttt{pre-asymp} \\
{1.25e-3}  & {1.08e-6} & \texttt{pre-asymp} & {1.00e-6} & \texttt{pre-asymp} & {1.10e-6} & \texttt{pre-asymp}  & {1.10e-6} & \texttt{pre-asymp} \\
{6.25e-4}  & {2.70e-7} & 2.00   & {2.57e-7} & 1.97  & {2.84e-7} & 1.95  & {2.83e-7} & 1.95 \\
{3.13e-4}  & {6.75e-8} & 2.00   & {6.48e-8} & 1.99  & {7.15e-8} & 1.99  & {7.13e-8} & 1.99 \\
{1.56e-4}  & {1.69e-8} & 2.00   & {1.63e-8} & 1.99  & {1.79e-8} & 2.00  & {1.79e-8} & 2.00 \\
{7.81e-5}  & {4.22e-9} & 2.00   & {4.07e-9} & 2.00  & {4.48e-9} & 2.00  & {4.47e-9} & 2.00 \\
\bottomrule
\end{tabular}}
\end{table}

Table \ref{tab:convergence_comparison} reports the velocity $L^2$ errors and convergence rates for the ETD-MS2 and ETD-mr-ccSAV-MS2o schemes at $T = 0.1$ and $T=1.0$. 
 We observe from this table the following phenomena. 
(i) 
For the small step size regime ($\tau \le 1.25 \times 10^{-3}$),  
we recover the expected second-order accuracy for both schemes. 
(ii) For intermediate step sizes ($5\times 10^{-3} \ge \tau\ge 2.6\times 10^{-3}$), the ETD-MS2 \eqref{ETD-MS2} without the stabilizing scalar auxiliary variable (SAV) can numerically blow up.
(iii) The ETD-mr-ccSAV-MS2 achieves an error of the order of $10^{-2}$ or less for the same intermediate step sizes. (iv) The rate of convergence  of the ETD-mr-ccSAV-MS2 scheme for the intermediate step sizes is not the theoretical one. 

We also tested the schemes at $T = 0.5$ with step size $\tau = 2.6 \times 10^{-3}$. The  ETD-MS2 scheme remains stable at $T = 0.5$ with an error of the order of $10^{-2}$, but suffers from numerical blow-up when the integration time is extended to $T = 1.0$. Due to space limitations, the results for $T = 0.5$ are omitted here. In contrast, the ETD-mr-ccSAV-MS2o scheme still produces a relatively small error, albeit without the theoretical order of convergence. This clearly demonstrates the second-order stability of the ETD-mr-ccSAV-MS2 scheme as well as its stability advantage in the regime where the classical ETD-MS2 scheme numerically blows up.

Next, we examine the variable-step case. Starting from a uniform grid, we perturb each uniform partition by a relative amplitude of $15\%$ and then rescale the resulting sequence so that the final time remains $T=1$. The errors are reported as functions of the total number of time steps $N$. As shown in Table~\ref{tab:convergency_perturbed}, the ETD-mr-ccSAV-MS2o scheme retains its expected second-order convergence under the perturbed time-step sequence. These results indicate that the proposed mean-reverting ccSAV treatment improves robustness for intermediate time steps while preserving the designed convergence order at relatively small time steps in both fixed-step and variable-step computations.

\begin{table}[htbp]
\centering
\scriptsize
\caption{Vorticity $L^2$ errors and observed convergence rates for $\nu=1/50$ and $m=1$, computed by ETD-mr-ccSAV-MS2o at $T=1$ under a $15\%$ perturbed variable-step sequence. Here $N$ denotes the total number of time steps.}
\label{tab:convergency_perturbed}
\begin{tabular}{@{} l *{7}{c} @{}}
\toprule
$N$ & $2^2\times100$ & $2^3\times100$ & $2^4\times100$ & $2^5\times100$ & $2^6\times100$ & $2^7\times100$ & $2^8\times100$ \\
\midrule
Error & 1.72e-2 & 6.63e-6 & 1.67e-6 & 4.17e-7 & 1.04e-7 & 2.61e-8 & 6.52e-9 \\
Rate & -- & \texttt{pre-asymp} & 1.99 & 2.00 & 2.00 & 2.00 & 2.00 \\
\bottomrule
\end{tabular}
\end{table}

\subsection{Stability advantage at larger time steps}\label{subsec:shearflow}
Example 5.1 indicates that the ETD-mr-ccSAV-MS2 scheme can maintain a small error when the classical ETD-MS2 numerically blows up. Here, we provide further numerical evidence for the stability advantage of the ETD-mr-ccSAV-MS2 scheme over the classical ETD-MS2 scheme \eqref{ETD-MS2} and ETD-ccSAV-MS2 \eqref{eqn:ETD-SAV-MS2} (without mean-reversion). 
\ignore{
\begin{exm}\label{exm:shear_layer}[Double shear layer]
In this example, we consider the double shear layer problem on $\Omega = (0,1)^2$ with viscosity $\nu = 5\times 10^{-5}$ and final time $T = 1.2$. The initial velocity field is prescribed as
\begin{equation*}
u_0(x,y) = \begin{cases} \tanh\!\bigl(\rho(y - \tfrac{1}{4})\bigr), & y \le \tfrac{1}{2}, \\[4pt] \tanh\!\bigl(\rho(\tfrac{3}{4} - y)\bigr), & y > \tfrac{1}{2}, \end{cases} \qquad v_0(x,y) = \delta \sin(2\pi x),
\end{equation*}
with $\rho = 100$ and $\delta = 0.05$, and the initial vorticity $\omega_0$ is obtained from the velocity field. No external forcing is applied ($f = 0$). The Reynolds number is approximately $1.96\times 10^4$ based on the $L^2$ norm of the initial velocity $\|\bm{u}_0\| \approx 0.98$. For spatial discretization, $256$ Fourier modes are used. 
\end{exm}

We set the mean-reverting parameter $\gamma = 1000$ and $\tilde{\gamma} = 0.1$. Using a relatively large time step $\tau = 6\times 10^{-4}$, we compare three schemes: ETD-mr-ccSAV-MS2o, ETD-MS2, and mr-SAV-BDF2. The result produced by ETD-mr-ccSAV-MS2o with a finer time step $\tau = 3\times 10^{-4}$ serves as the reference solution. The vorticity contours at selected time instances are shown in Figure~\ref{fig:shear_vorticity}, and the enstrophy time histories are reported in Figure~\ref{fig:shear_enstrophy}.

From Figure~\ref{fig:shear_vorticity}, we observe that the double shear layer develops thin, rolling vortex structures as the flow evolves. The reference solution shown in the top row captures the progressive roll-up and thinning of the shear layers at $t = 0.4$, $0.8$, and $1.2$. Among the three schemes at the larger step size $\tau = 6\times 10^{-4}$ shown in the bottom row, the ETD-mr-ccSAV-MS2o scheme produces vorticity fields in close agreement with the reference solution, demonstrating its stability for this challenging problem. In contrast, the ETD-MS2 scheme suffers numerical blow-up at approximately $t \approx 0.831$; the panel shown at $t = 0.8$ captures the last valid state before divergence. The mr-SAV-BDF2 scheme remains stable throughout the simulation, but its vorticity contours exhibit noticeably greater numerical diffusion compared to the ETD-mr-ccSAV-MS2o result.

The enstrophy evolution in Figure~\ref{fig:shear_enstrophy} further confirms these observations. The ETD-mr-ccSAV-MS2o scheme with the larger time step tracks the reference enstrophy curve closely, while the ETD-MS2 enstrophy diverges rapidly near $t \approx 0.831$, indicating a loss of numerical stability. We note that in the absence of external forcing, the enstrophy of the Navier--Stokes equations is monotonically decreasing. The ETD-mr-ccSAV-MS2o scheme maintains this property at the discrete level. By contrast, although the mr-SAV-BDF2 scheme preserves the boundedness of enstrophy, it suffers from non-physical numerical growth.

These results demonstrate the superior stability of the mean-reverting ccSAV formulation for flow problems involving thin vorticity layers. Moreover, the ETD-mr-ccSAV-MS2o scheme allows for larger time steps without inducing numerical blow-up or violating the fundamental dissipation properties.}

{%
\begin{exm}\label{exm:shear_layer}[Double shear layer]
In this example, we consider the double shear layer problem on $\Omega = (0,1)^2$ with viscosity $\nu = 5\times 10^{-5}$ and final time $T = 1.2$. The initial velocity field is prescribed as
\begin{equation*}
u_0(x,y) = \begin{cases} \tanh\!\bigl(\rho(y - \tfrac{1}{4})\bigr), & y \le \tfrac{1}{2}, \\[4pt] \tanh\!\bigl(\rho(\tfrac{3}{4} - y)\bigr), & y > \tfrac{1}{2}, \end{cases} \qquad v_0(x,y) = \delta \sin(2\pi x),
\end{equation*}
with $\rho = 100$ and $\delta = 0.05$, and the initial vorticity $\omega_0$ is obtained from the velocity field via spectral differentiation. We consider a freely decaying flow with $f\equiv 0$.
The initial Reynolds number $\mathrm{Re}_{in}$ is $ \approx 1.96\times 10^4$. 
\end{exm}

We set  $\gamma = 1000$ and $\tilde{\gamma} = 0.1$. To test stability under a relatively large time step, we compare ETD-mr-ccSAV-MS2o and ETD-MS2 with $\tau = 6\times 10^{-4}$. The ETDRK4 solution computed with the finer time step $\tau = 1\times 10^{-4}$ is used as the reference. 
The evolution of the $L^2$ relative errors and the enstrophy for the two schemes are plotted in Figure~\ref{fig:shear_error_enstrophy}.
We observe that the ETD-mr-ccSAV-MS2o error remains below $10^{-2}$ over the whole interval and its enstrophy curve closely follows the reference trend. By contrast, the ETD-MS2 suffers numerical blow-up at approximately $t=0.831$. This is another example demonstrating ETD-mr-ccSAV-MS2's ability to produce a numerical solution with small ($\le 10^{-2}$) relative error when the classical ETD-MS2 blows up using the same time step.

\ignore{
\begin{figure}[htbp]
\centering
\includegraphics[width=0.6\linewidth]{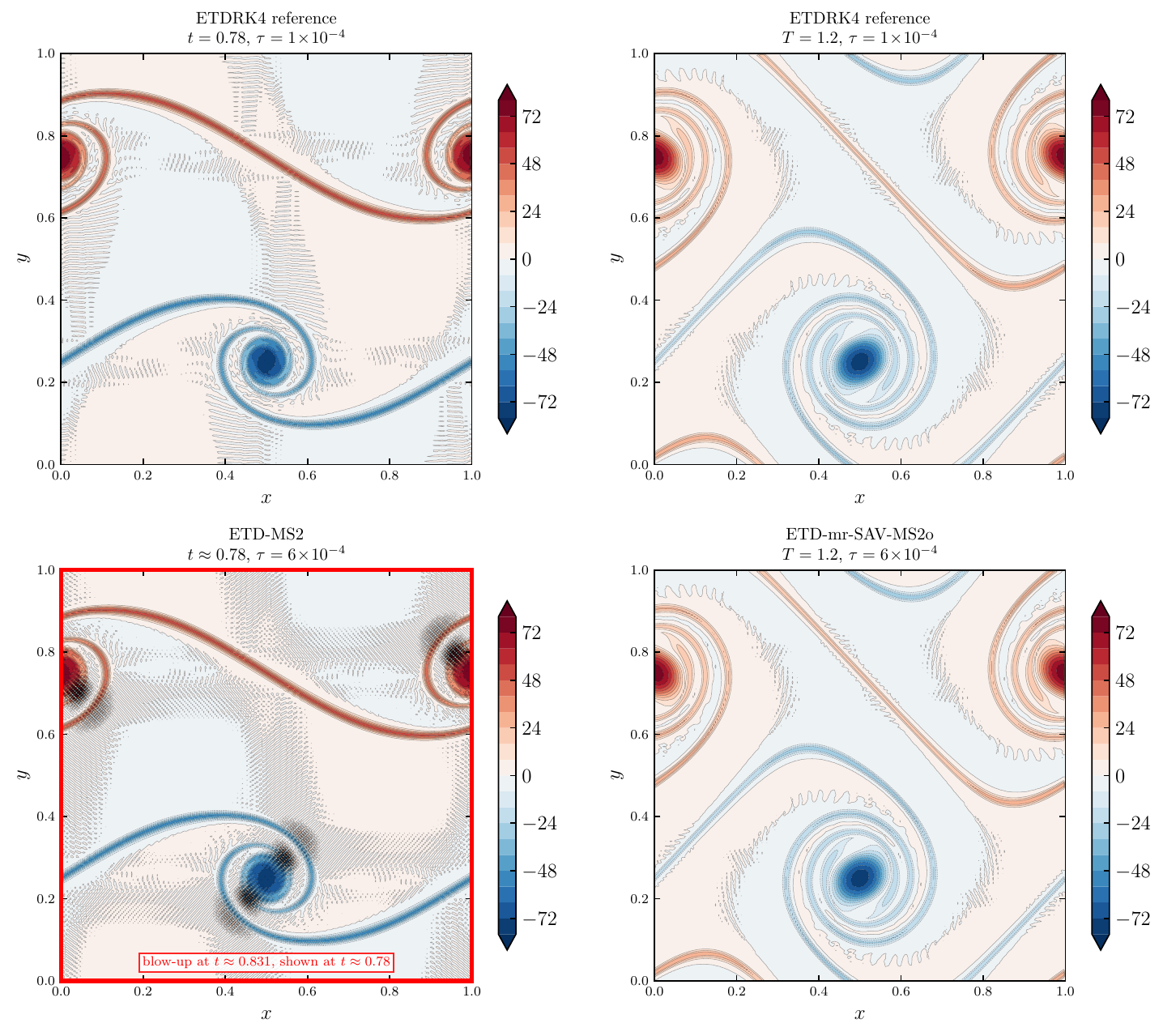}
\caption{Vorticity contours for the double shear layer with $\rho = 100$ and $\nu = 5\times 10^{-5}$. 
Top row: reference ETDRK4 solution computed with $\tau = 1\times 10^{-4}$.
Bottom row: ETD-MS2 before blow-up and ETD-mr-ccSAV-MS2o with $\tau = 6\times 10^{-4}$.
The ETD-MS2 panel is marked to indicate the loss of stability before the final time.}
\label{fig:shear_vorticity}
\end{figure}
}

\begin{figure}[htbp]
\centering
\includegraphics[width=0.8\linewidth]{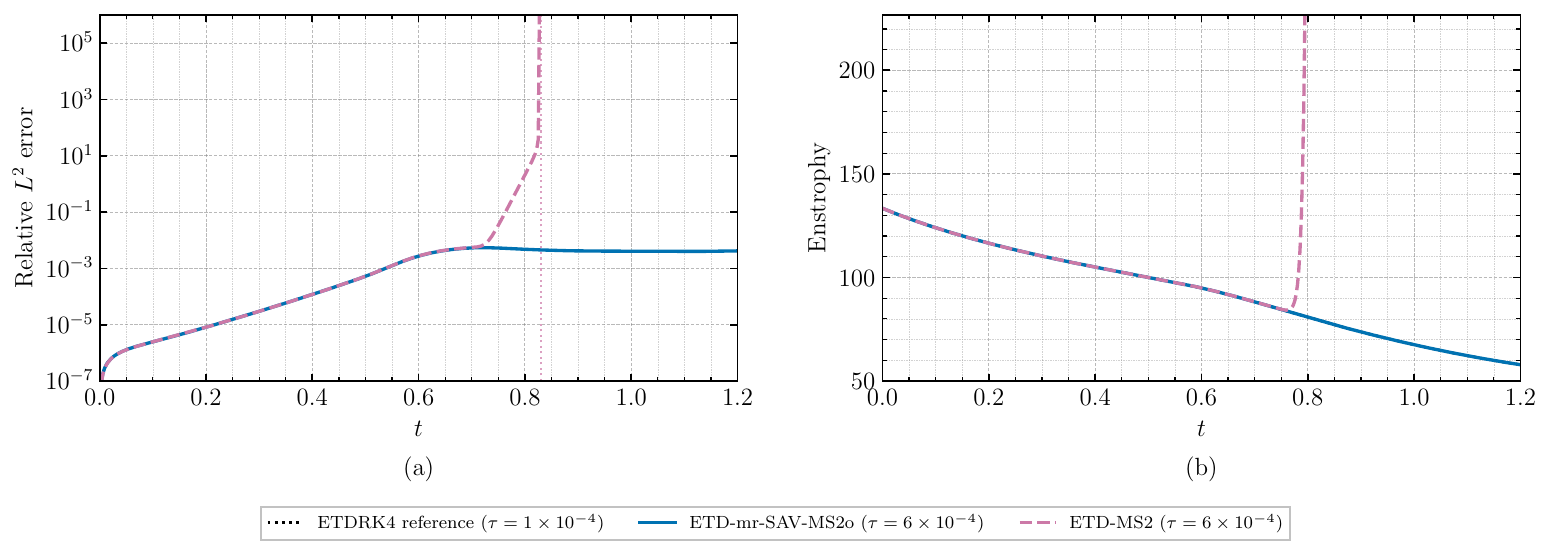}
\caption{Relative error and enstrophy evolution for the double shear layer with $\rho = 100$ and $\nu = 5\times 10^{-5}$. 
(a) Relative $L^2$ error with respect to the ETDRK4 reference solution computed with $\tau = 1\times 10^{-4}$. 
(b) Discrete enstrophy histories for the reference solution, ETD-mr-ccSAV-MS2o, and ETD-MS2. 
The ETD-mr-ccSAV-MS2o curve remains close to the reference up to $T = 1.2$, whereas ETD-MS2 develops rapidly growing error and loses stability near $t \approx 0.831$; non-finite states after blow-up are omitted from the plotted curves.}
\label{fig:shear_error_enstrophy}
\end{figure}
}

{%
We now demonstrate that the mean-reverting mechanism is not only theoretically important to our uniform-in-time bound, it can also enhance the performance by suppressing  numerical drifts. 
\begin{exm}\label{exm:mean_reverting_test}[Kolmogorov forcing: case 1]
We set $\Omega=(0,2\pi)^2$, $\nu=1/40$. The external forcing is given by $f=4\cos(4x)$ which is called a Kolmogorov forcing; the corresponding exact solution $10\cos(4x)$ is a Kolmogorov flow.  
To test the effect of the mean-reverting mechanism, we choose an initial condition that is different from the Kolmogorov flow. More specifically, we set  $\psi_0=\psi_\varepsilon$ with $\varepsilon=0.25$ in \eqref{eqn:iso_pertbation}, and we compare the same second-order ETD-mr-ccSAV discretization with $\gamma=1000$ and $\gamma=0$ under the time step $\tau=10^{-3}$  and $\tilde{\gamma}=0.1$. A reference solution is generated by running the ETDRK4 scheme with $\tau_{\rm ref}=1\times10^{-4}$.
\end{exm}

Figure~\ref{fig:mean_reverting_diagnostics} present the relative $L^2$ error of vorticity, and the evolution of enstrophy for the two choices of $\gamma$. Table~\ref{tab:mean_reverting_error_comparison} lists the corresponding values at selected times instants $t=4,6,8, 10$. 
Here are a few observations.
(i) The strong mean-reversion leads to a substantially more accurate numerical solution when compared to the no-mean-reversion case. With $\gamma=1000$, the relative $L^2$ error is of the order of $10^{-3}$, whereas the non-mean-reverting computation with $\gamma=0$ leads to a relative error $\ge 50\%$ at the end of the simulation although it is small at early times. 
(ii) The enstrophy or enstrophy production rate may not be strongly correlated with solution error. Indeed, the relative error in enstrophy for the no mean-reversion case of $\gamma=0$ at $t=6$ is  small at less than $0.15\%$. However, the relative $L^2$ error in solution is already more than $10\%$.
This result indicates that enstrophy-related metrics may serve as auxiliary stability diagnostics, yet they may not be suitable as standalone error indicators for adaptive algorithms.


\begin{figure}[htbp]
\centering
\includegraphics[width=0.8\linewidth]{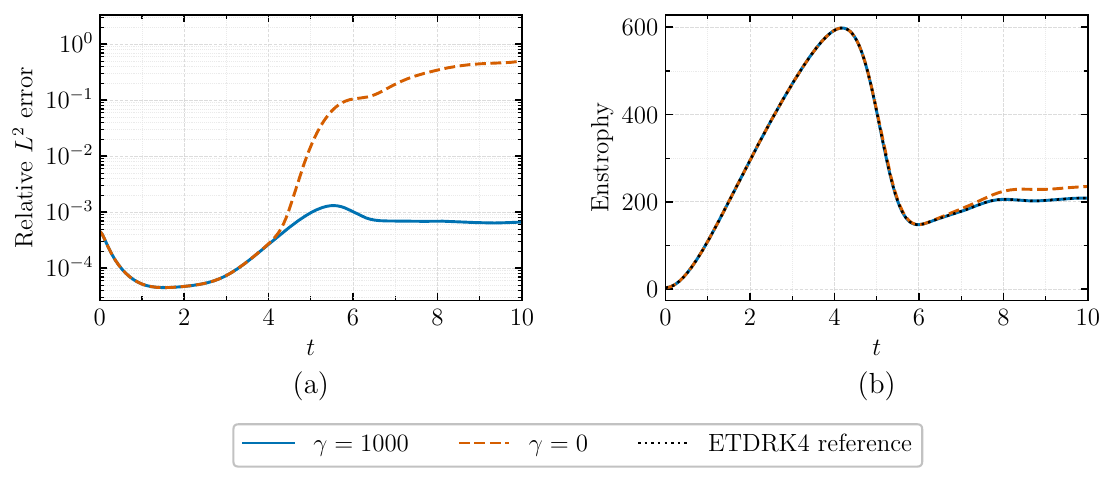}
\caption{Effect of the mean-reverting correction.
(a) Relative $L^2$ error of the vorticity. 
(b) Enstrophy evolution for the reference solution and the two ETD-mr-ccSAV variants. 
}
\label{fig:mean_reverting_diagnostics}
\end{figure}

\begin{table}[htbp]
\centering
\scriptsize
\setlength{\tabcolsep}{4pt}
\caption{Comparison of the relative errors in vorticity, and enstrophy at selected times for the mean-reverting and non-mean-reverting ccSAV schemes. All errors are computed with respect to the ETDRK4 reference solution. Here, $e_\omega=\|\omega_h-\omega_{\rm ref}\|_2/\|\omega_{\rm ref}\|_2$, $e_{\mathcal{E}}=|\mathcal{E}_h-\mathcal{E}_{\rm ref}|/|\mathcal{E}_{\rm ref}|$.
}
\label{tab:mean_reverting_error_comparison}
\begin{tabular}{@{}c cc cc@{}}
\toprule
\multirow{2}{*}{$t$}
& \multicolumn{2}{c}{$\gamma=1000$}
& \multicolumn{2}{c}{$\gamma=0$} \\
\cmidrule(lr){2-3}\cmidrule(lr){4-5}
& $e_{\omega}$ & $e_{\mathcal{E}}$
& $e_{\omega}$ & $e_{\mathcal{E}}$ \\
\midrule
$4$  & $2.68\times10^{-4}$ & $1.48\times10^{-7}$
     & $2.75\times10^{-4}$ & $2.64\times10^{-6}$ \\
$6$  & $1.02\times10^{-3}$ & $2.27\times10^{-7}$
     & $1.06\times10^{-1}$ & $1.48\times10^{-3}$ \\
$8$  & $6.95\times10^{-4}$ & $1.50\times10^{-5}$
     & $3.47\times10^{-1}$ & $8.92\times10^{-2}$ \\
$10$ & $6.68\times10^{-4}$ & $7.06\times10^{-5}$
     & $5.07\times10^{-1}$ & $1.28\times10^{-1}$ \\
\bottomrule
\end{tabular}
\end{table}
}

{
\subsection{Performance of the adaptive scheme}
Here we illustrate the performance of our adaptive ETD-mr-ccSAV-MS12 scheme.

\begin{exm}\label{exm:kolmogorov_1}[Kolmogorov forcing: case 2, adaptive time-stepping]
We test the performance of the adaptive algorithm in a Kolmogorov forcing setting. More specifically, we set $\Omega= (0,2\pi)^2$,  $f(x,y) = -m\cos(my), m=4, \nu=1/40$. 
The stream function of the associated steady-state Kolmogorov flow is given by 
$\psi(x,y)=-\frac{1}{\nu m3}\cos(my)$.
This Kolmogorov flow is unstable in the parameter regime that we test here \cite{armbruster1996symmetries}. 

In addition, we set $\gamma=1000$, and $\tilde{\gamma}=0.1$ in our scheme, and the final time is set to $T=20$.
The initial streamfunction is prescribed as $\psi_0(x,y) = \psi_\varepsilon(x,y)$ with $\varepsilon=3$, yielding an initial Reynolds number $\text{Re}_{in} \approx 1156.99$ rendering the simulation a moderate-Reynolds-number one. Two time-stepping strategies are compared: (1) the uniform step sizes $\tau = 0.004,0.002,0.0015,0.001, 0.0005$ are used for the ETD-mr-ccSAV-MS2o scheme; and (2) the adaptive ETD-mr-ccSAV-MS12 scheme with the parameters in the step-controller \eqref{update_function} set to be
\begin{equation}\label{adaptive_set}
    \rho = 0.9,\quad \mathrm{tol}_{\omega} = \mathrm{tol}_{r} = 5.5\times 10^{-4},\quad
    \tau_{\min} = 1\times 10^{-5},\quad \tau_{\max} = 1\times 10^{-2}.
\end{equation}
\end{exm}
 
Figure~\ref{fig:long_time_adap} reports the adaptive step-sizes, culmulative step-counts, CPU times, the auxiliary variable, local $L^2$ error indicator, and the relative reference $L^2$ errors. 
(i) Panel (a) indicates that the step-size controller changes the time step according to the local solution activity and the prescribed tolerances. 
(ii) 
Panel (f) shows that the adaptive scheme attains an error level comparable to the finest fixed-step implementation at $\tau=5\times10^{-4}$.
(iii) Panel (b) and (c) demonstrate that the adaptive scheme uses fewer cumulative steps and less CPU time than the fixed-step scheme with similar relative errors. 
(iv) Panel (d) and (e) show the adaptive scheme's ability to control the scalar auxiliary variable and the relative error indicator in the scheme.
We conclude that the adaptive method provides a more efficient balance between accuracy and computational effort for this test case.

{
We further test the robustness of the adaptive step-size controller by perturbing the nominal tolerance pair $(\mathrm{tol}_{\omega},\mathrm{tol}_r)=(5.5\times10^{-4},5.5\times10^{-4})$ by $\pm10\%$.
The changes in CPU time and relative errors are all within $15\%$. The test does not imply complete tolerance independence, but it demonstrates local robustness around the tolerance level used in this experiment.}


\begin{figure}[htbp]
\centering
\includegraphics[width=0.9\linewidth]{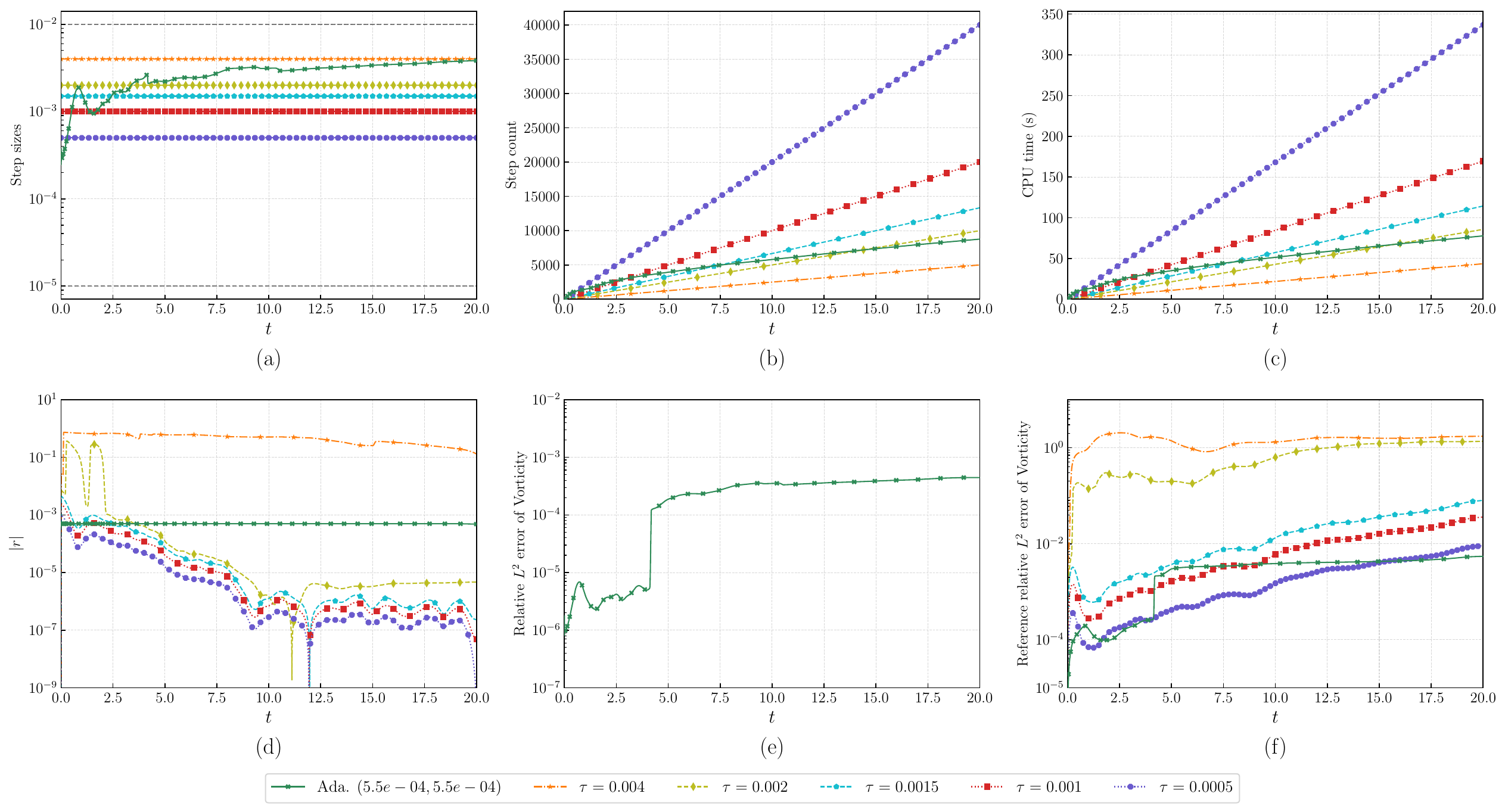}
\caption{Comparison of fixed-step and adaptive ETD-mr-ccSAV-MS12 schemes over $t \in [0, 20]$ for  Kolmogorov forcing with $\nu = 1/40$ and $m = 4$.
The adaptive solver uses $\mathrm{tol}_{\omega}=\mathrm{tol}_{r}=5.5\times10^{-4}$ and $\rho=0.9$; fixed-step results are shown for $\tau=5\times10^{-4},10^{-3}$, $1.5\times10^{-3}$, $2\times10^{-3}$, and $4\times10^{-3}$. 
(a) Adaptive time step sizes as a function of time; horizontal dashed lines mark the fixed step sizes and the solver bounds $\tau_{\min}=10^{-5}$ and $\tau_{\max}=10^{-2}$. (b) Cumulative number of steps. (c) CPU time. (d) Absolute value of the auxiliary variable $|r|$. (e) Relative $L^2$ error indicator of vorticity used by the adaptive controller. (f) Reference relative $L^2$ error of vorticity, computed against the ETDRK4 solution with $\tau_{\rm ref}=2.5\times10^{-4}$.}
\label{fig:long_time_adap}
\end{figure}

\ignore{
\begin{figure}[htbp]
\centering
\includegraphics[width=0.9\linewidth]{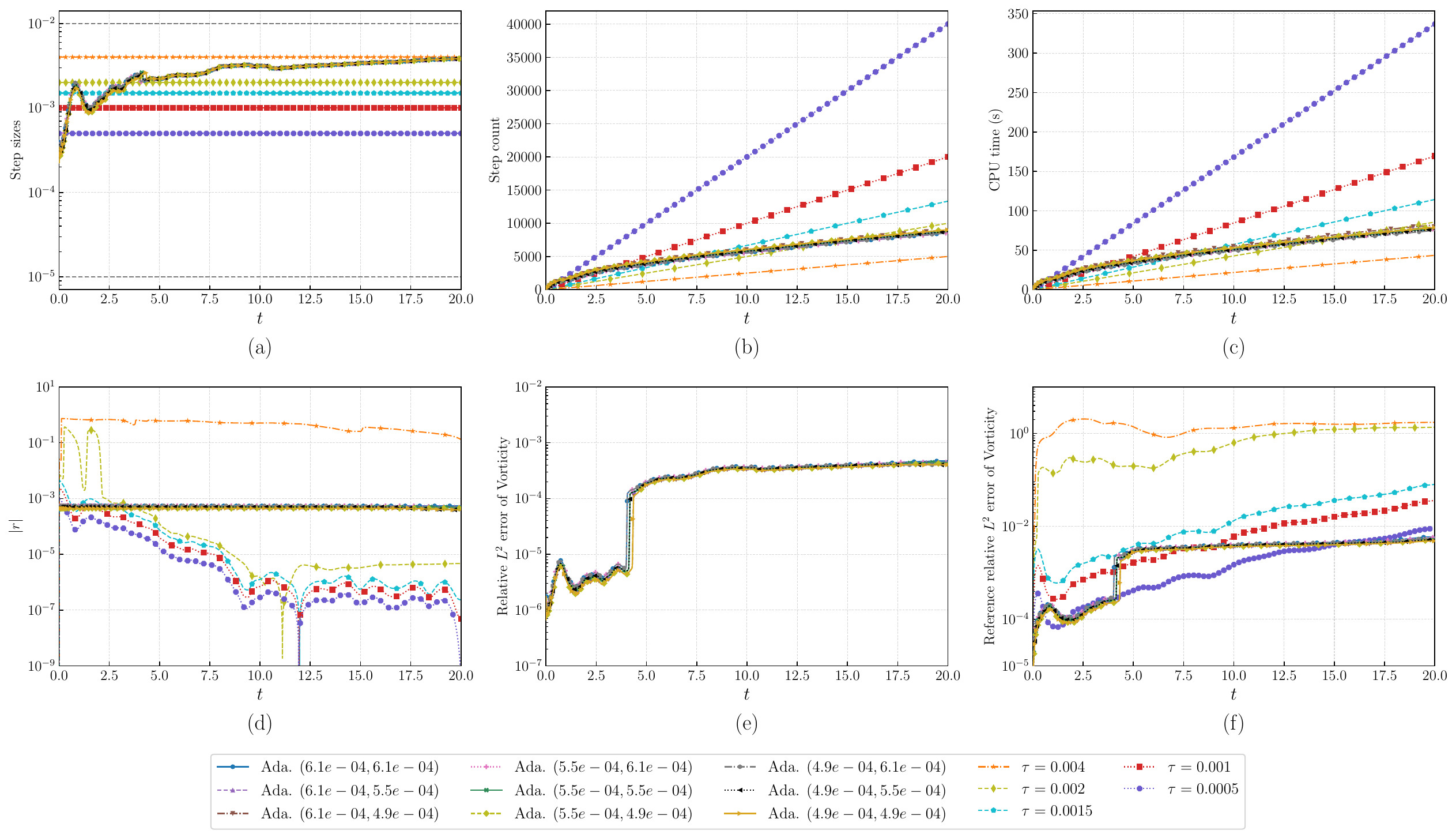}
\caption{Robustness of the adaptive ETD-mr-ccSAV-MS12 scheme under $\pm10\%$ perturbations of the tolerance pair around the nominal value $(\mathrm{tol}_{\omega},\mathrm{tol}_{r})=(5.5\times10^{-4},5.5\times10^{-4})$, for the Kolmogorov flow with $\nu=1/40$ and $m=4$ over $t\in[0,20]$. Nine tolerance pairs from $\{0.9,1.0,1.1\}\times\{0.9,1.0,1.1\}$ times the nominal pair are shown. (a) Adaptive time step sizes as a function of time; the tight clustering of curves indicates that the step-size controller is insensitive to small tolerance variations. (b) Cumulative number of steps. (c) CPU time. (d) Absolute value of the auxiliary variable $|r|$. (e) Relative $L^2$ error indicator of vorticity used by the adaptive controller. (f) Reference relative $L^2$ error of vorticity, computed against the ETDRK4 solution with $\tau_{\rm ref}=2.5\times10^{-4}$.}
\label{fig:long_time_adap_2}
\end{figure}
}

\ignore{
To examine long-time behavior in a more unstable regime, we set $m = 4$, $\nu = 1/40$, and $\gamma = 1000$, with the initial streamfunction taken as a perturbation of the steady state in the form \eqref{initial} with the corresponding values of $m$ and $\nu$. The final time is $T = 10000$, and the spatial discretization employs 256 Fourier modes in each spatial direction. Two simulations are carried out: (i) a fixed-step run using ETD-mr-ccSAV-MS2o with step size $\tau = 10^{-3}$; and (ii) an adaptive run using ETD-mr-ccSAV-MS12 with the controller parameters specified in \eqref{adaptive_set}.

Figure~\ref{fig:L2normevol} shows the time evolution of enstrophy for both schemes over the interval $[0, T]$. Although the two trajectories eventually diverge—a consequence of the sensitive dependence on initial conditions inherent to this dynamical regime, as evidenced by positive Lyapunov exponents—the enstrophy remains uniformly bounded throughout the entire simulation for both methods. This confirms the long-time stability of the proposed schemes.

\begin{figure}[htbp]
\centering
\includegraphics[width=\linewidth]{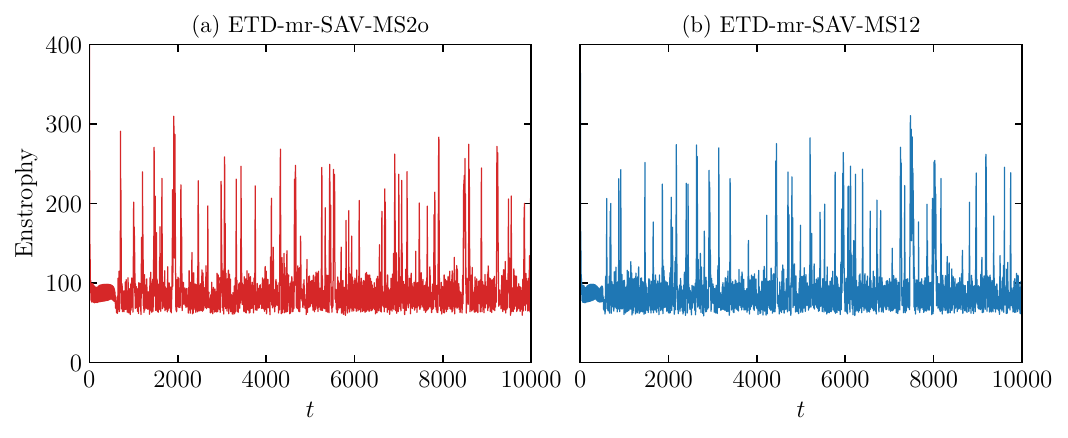}
\caption{Enstrophy ($L^2$ norm of vorticity) versus time computed by ETD-mr-ccSAV-MS2o and ETD-mr-ccSAV-MS12 for $\nu = 1/40$ and $m = 4$.}
\label{fig:L2normevol}
\end{figure}

We next investigate the advantages of adaptive time stepping in the bursting regime with $m = 4$ and $\nu = 1/40$; the results are presented in Figure~\ref{fig:adaptive_step_k}. 
Panels (a)--(c) demonstrate that the adaptive controller automatically selects smaller time steps during bursting events, which manifest as transient spikes in the enstrophy, thereby enhancing temporal resolution precisely when the solution undergoes rapid variation. During quiescent phases, the step size is allowed to grow, enabling the scheme to advance efficiently without sacrificing accuracy.

To quantify the relationship between the adaptive step size and the enstrophy, we compute the Pearson correlation coefficient (PCC), $PCC(x,y) = \frac{\sum (x - m_x) (y - m_y)}{\sqrt{\sum (x - m_x)^2 \sum (y - m_y)^2}}$, where $m_x$ and $m_y$ denote the sample means of $x$ and $y$, respectively. The PCC between the step size and the enstrophy is $-0.8370$, confirming a strong negative correlation: the adaptive controller consistently assigns smaller time steps precisely when the enstrophy is elevated. Furthermore, Panel (d) of Figure~\ref{fig:adaptive_step_k} compares the cumulative CPU time of the two runs as a function of physical time. 
The adaptive simulation completes the integration in approximately $1.47 \times 10^6$ time steps with a total wall-clock time of approximately $28.5$ hours, whereas the fixed-step run requires $10^7$ time steps and approximately $194.4$ hours, a reduction by a factor of approximately $6.8$ in computational cost, while maintaining the same long-time stability properties.

\begin{figure}[htbp]
    \centering
    \includegraphics[width=\linewidth]{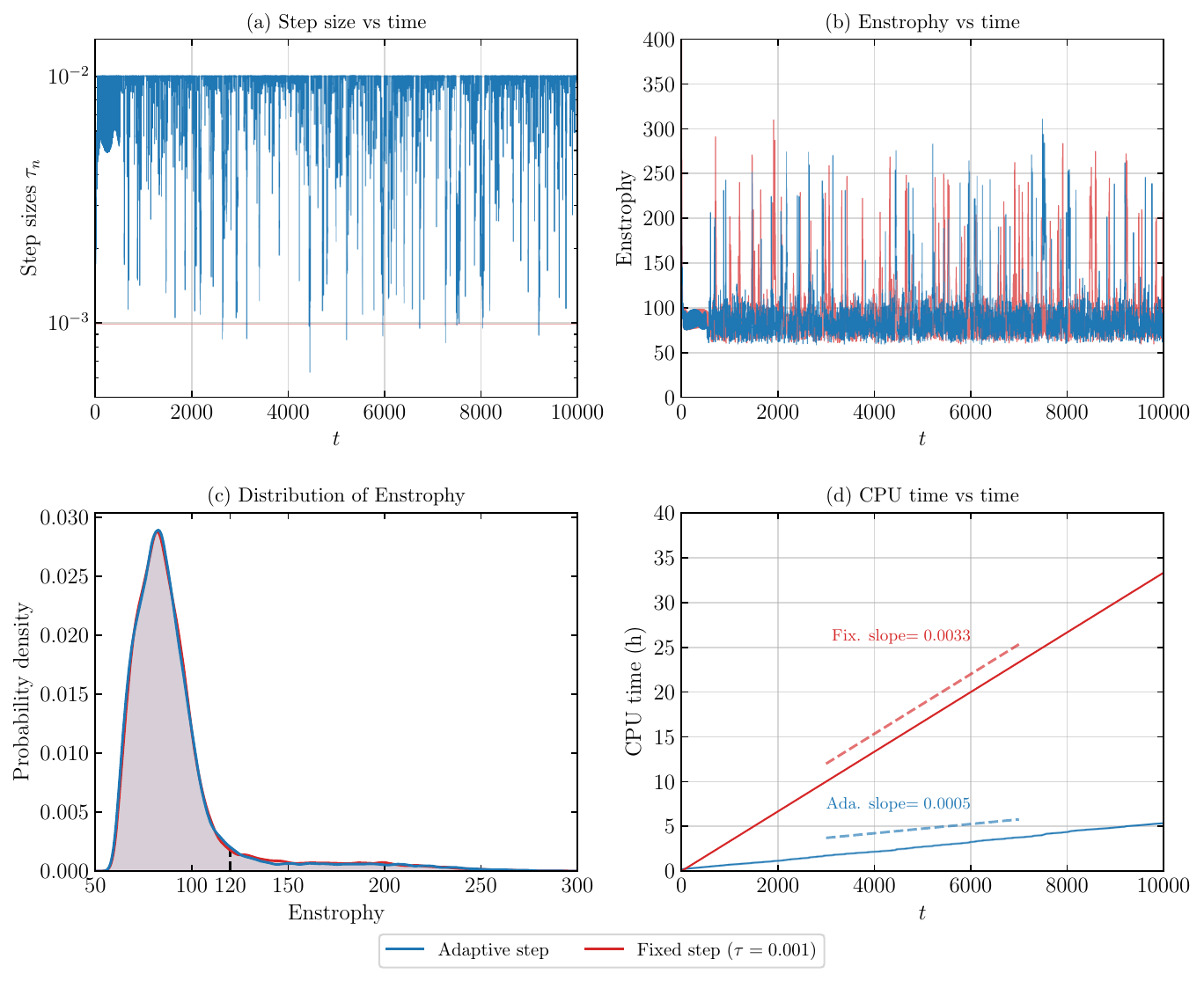}
    \caption{(a) Adaptive step sizes $\tau_n$ as a function of time. (b) Enstrophy as a function of time. (c) Rate of change of enstrophy as a function of time. (d) Correlation between step sizes and enstrophy (PCC $= -0.8370$). (e) CPU time of time steps as a function of time, comparing the fixed-step run and the adaptive run.}
    \label{fig:adaptive_step_k}
\end{figure}

We finally examine long-time statistical consistency. We consider the probability density function (PDF) of enstrophy and compute the total variation distance between the enstrophy distributions produced by different schemes using the same uniform temporal sampling. As shown in Figure~\ref{fig:l2_norm_dist}, the PDFs are nearly indistinguishable; Table~\ref{tab:tv_comparison} reports the total variation distances, which are small. These observations indicate that the proposed schemes yield statistically consistent long-time behavior and can recover long-time statistics of the 2D NSE \cite{foias2001navier,majda2006nonlinear,wang2012efficient}. The fat tail in the PDF is consistent with the bursting and intermittent behavior observed in Figure~\ref{fig:L2normevol}.

\begin{figure}[htbp]
\centering
\includegraphics[width=0.8\linewidth]{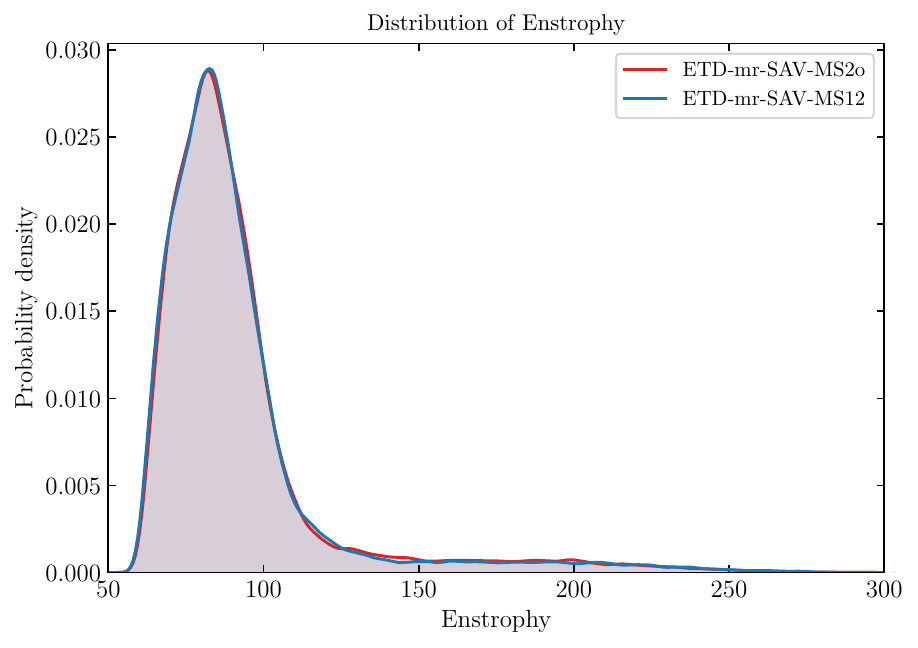}
\caption{Probability density functions of enstrophy estimated by ETD-mr-ccSAV-MS2o and ETD-mr-ccSAV-MS12 for $\nu = 1/40$ and $m = 4$. The two PDFs are derived from identical uniform temporal sampling of each trajectory.}
\label{fig:l2_norm_dist}
\end{figure}

\begin{table}[htbp]
\centering
\caption{Total variation distance between enstrophy distributions obtained by ETD-mr-ccSAV-MS2o, ETD-mr-ccSAV-MS12 for $\nu=1/40$ and $m=4$.}
\label{tab:tv_comparison}
\begin{tabular}{@{}lcc@{}}
\toprule
Enstrophy range & Total variation distance & Relative $L^1$ error \\
\midrule
$\mathcal{E} < 120$              & $0.012379$ & $2.71\%$ \\
$\mathcal{E} \geq 120$           & $0.006306$ & $12.63\%$ \\
$-\infty < \mathcal{E}< +\infty$   & $0.018686$ & $3.74\%$ \\
\bottomrule
\end{tabular}
\end{table}

These results collectively validate the effectiveness of the ETD-mr-ccSAV-MS12 scheme: it provides automatic short-time error control while maintaining long-time stability, and it recovers long-time statistics with significantly improved efficiency.

}

{

\subsection{Long-time efficiency and statistics}
The next  example examines whether the stability mechanism remains useful in adaptive and genuinely long-time computations, where trajectory-wise agreement is not expected but statistical diagnostics remain meaningful.
\begin{exm} [Kolmogorov forcing: case 3, long-time]
To examine the long-time behavior in a dynamically unstable regime, we use the same parameters as in the previous example \ref{exm:kolmogorov_1}, i.e., $m=4$, $\nu=1/40$, $\gamma=1000$,  $\tilde{\gamma} = 0.1$, $\psi_0=\psi_\varepsilon, \varepsilon=3$, but with a much larger terminal time $T=10000$. 
We compare a fixed-step ETD-mr-ccSAV-MS2o computation with $\tau=5\times 10^{-4}$ and an adaptive ETD-mr-ccSAV-MS12 computation with the controller parameters specified in \eqref{adaptive_set}. 
This choice of fixed-step size is suggested by Example 5.4 as this step size produces roughly equivalent small local relative error in vorticity when compared to the adaptive scheme with the chosen parameters.
\end{exm}

Figure~\ref{fig:adaptive_step_k} summarizes the long-time diagnostics. Panel~(a) illustrates the variation in step sizes of the adaptive computation. 
{
To quantify the response of the adaptive controller to changes in the solution, we compare the adaptive step size $\tau_n$ with the enstrophy production rate $\frac{d}{dt}\mathcal{E}$ along the adaptive trajectory. The Pearson correlation coefficient between these two quantities is $\text{PCC}=-0.6175$. This supports the intuition that an effective step-size controller should take small time-steps when the temporal evolution of the solution is fast.} Panel (b) compares the enstrophy histories produced by the two computations. Although the two trajectories separate over long time, both remain uniformly bounded in time as predicted by the scheme, even in this weakly turbulent regime \cite{armbruster1996symmetries}. Panel (c) plots the long-time distributions of the enstrophy associated with the two schemes. They are visually very close.  Panel (d) reports the cumulative CPU time as a function of physical time. The adaptive computation reaches $T=10000$ using approximately $1.49\times10^6$ time steps and $7.86$ hours of wall-clock time, whereas the fixed-step computation requires $2\times10^7$ time steps and approximately $125.08$ hours. Thus, the adaptive strategy reduces the computational cost by a factor of about $15.9$ while providing good  approximations of the long-time statistical quantities.

\begin{figure}[htbp]
    \centering
    \includegraphics[width=0.9\linewidth]{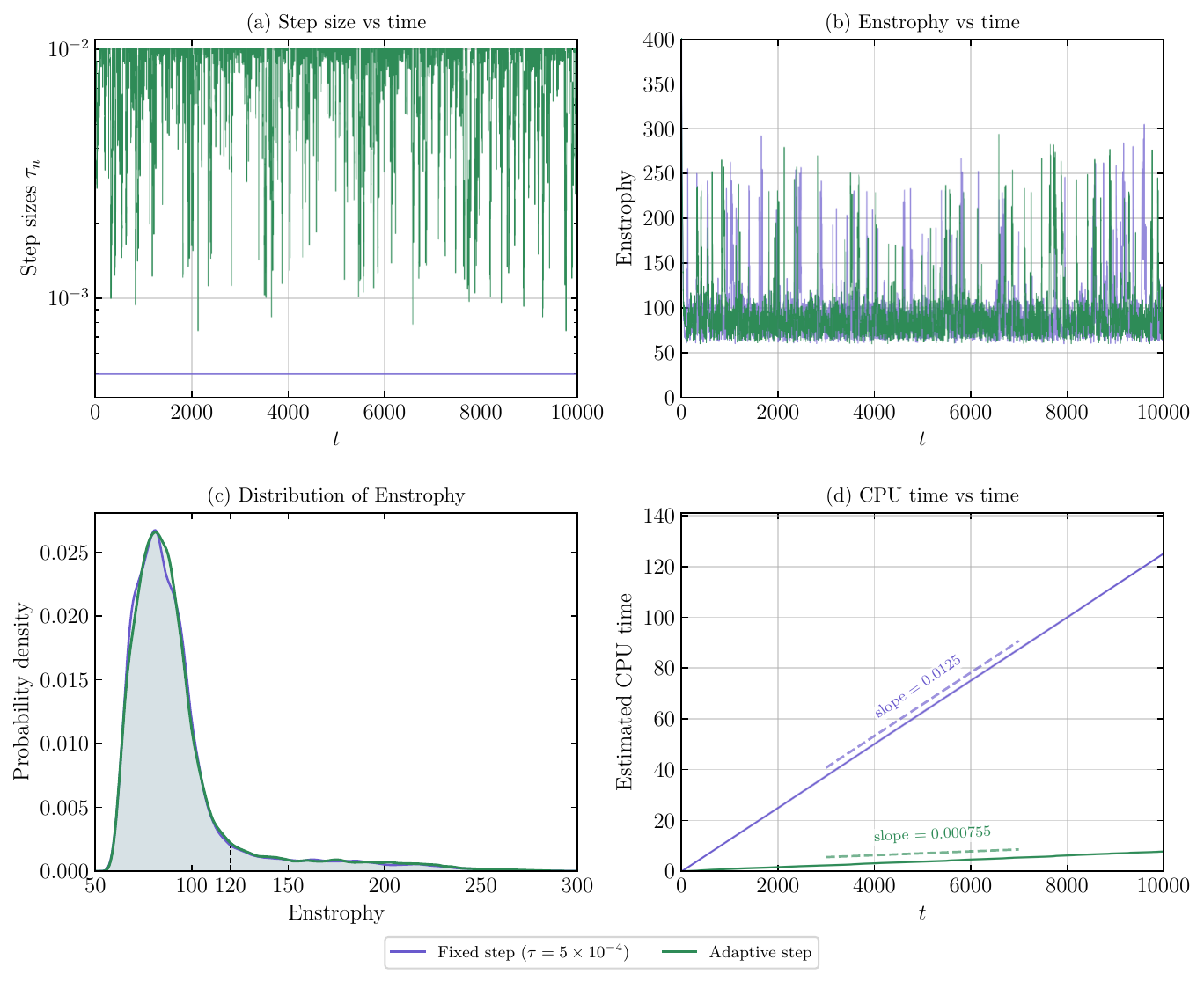}
    \caption{Long-time diagnostics in the bursting regime with $\nu=1/40$ and $m=4$. (a) Adaptive step sizes $\tau_n$ as a function of time, with the fixed step $\tau=10^{-3}$ shown for reference. (b) Enstrophy histories from the fixed-step and adaptive-step computations. (c) Probability density functions of enstrophy estimated on the statistically steady interval. (d) CPU time as a function of physical time for the fixed-step and adaptive-step computations.}
    \label{fig:adaptive_step_k}
\end{figure}

{

Notice that panel~(b) shows the existence of multiple quiescent windows and neighboring bursting events in the enstrophy evolution. We define quiescent windows as intervals during which the enstrophy remains below $\mathbb{E}({\mathcal{E}})+ \text{Std}(\mathcal{E})\approx125$; their mean duration is about $20$ eddy turnover times, so they are not short transients. In contrast, bursts are identified when the enstrophy exceeds $\mathbb{E}({\mathcal{E}})+2\text{Std}(\mathcal{E})\approx153$. The bursting events typically last  only $1 \sim 2$ eddy turnover times. Here, the eddy-turnover time is defined as $T_e=L/U$, with $U = \sqrt{\frac{2}{L^2T}\int_0^{T} E(t)\,dt}$, and $E(t)=\frac12\int_\Omega |\nabla^\perp\psi(\cdot,t)|^2 .$ 
In particular, an integration over $\mathcal{O}$(10) eddy-turnover times can fall entirely inside a quiescent window and therefore miss the subsequent transition to a bursting episode. This observation underscores that, for intermittent regimes, “long time” should be understood relative to the recurrence time of dynamically important events, not merely relative to a few eddy-turnover times.

}

We also present a comparison of the long-term statistical characteristics of enstrophy distributions in Table \ref{tab:moment_flow_scales_comparison}. The relative deviations between the mean value, second-order raw moment, variance, and standard deviation derived using the adaptive scheme and those from the fixed step size approach are small. This demonstrates that the adaptive step selection strategy induces no substantial bias in either the mean enstrophy or its fluctuation magnitude.

\begin{table}[htbp]
\centering
\scriptsize
\caption{Comparison of long-time statistics of the enstrophy and time-averaged flow scales for the fixed-step and adaptive-step computations with $\nu=1/40$ and $m=4$. Flow scales are averaged over the statistically steady interval $1000<t<10000$.}
\label{tab:moment_flow_scales_comparison}
\begin{tabular}{@{}lccc@{}}
\toprule
Quantity & Fixed step & Adaptive step & Relative difference \\
\midrule
\multicolumn{4}{@{}l}{\textit{long-time statistics of the enstrophy}} \\
Mean $\mathbb{E}[\mathcal{E}]$ & $92.4689$ & $93.2755$ & $0.872\%$ \\
Second moment $\mathbb{E}[\mathcal{E}^2]$ & $9490.7287$ & $9703.0214$ & $2.237\%$ \\
Variance $\mathrm{Var}(\mathcal{E})$ & $940.2255$ & $1002.7027$ & $6.645\%$ \\
Standard deviation $\mathrm{Std}(\mathcal{E})$ & $30.6631$ & $31.6655$ & $3.269\%$ \\
\midrule
\multicolumn{4}{@{}l}{\textit{Time-averaged flow scales}} \\
Typical velocity   & $1.1687$   & $1.1683$  & $0.034\%$ \\
Reynolds number    & $293.7226$ & $293.6217$ & $0.034\%$ \\
Eddy turnover time & $5.3763$   & $5.3781$  & $0.034\%$ \\
\bottomrule
\end{tabular}
\end{table}

The distributional discrepancy is also small. Table~\ref{tab:tv_comparison} reports the total variation distances, and relative $L^1$ errors for the low-enstrophy range $\mathcal{E}<120$, the bursting-tail range $\mathcal{E}\geq120$, and the full resolved range. These results show that the adaptive computation approximates the long-time enstrophy distribution of the fixed-step reference reasonably well except for the tail part. The relatively large relative error in the tail or rare events is expected due to well-known challenges in simulating tail events.

\begin{table}[htbp]
\centering
\scriptsize
\caption{Total variation distance between the steady-state enstrophy distributions obtained by ETD-mr-ccSAV-MS2o and ETD-mr-ccSAV-MS12 for $\nu=1/40$ and $m=4$.}
\label{tab:tv_comparison}
\begin{tabular}{@{}lcccc@{}}
\toprule
Enstrophy range &  Total variation distance & Relative $L^1$ error \\
\midrule
$\mathcal{E}<120$ &  $0.018273$ & $4.070\%$ \\
$\mathcal{E}\geq120$ & $0.005083$ & $9.968\%$ \\
$-\infty \leq \mathcal{E} \leq \infty$ &  $0.023355$ & $4.671\%$ \\
\bottomrule
\end{tabular}
\end{table}

Finally, we present a few statistics on tail events. Since we are interested in the long-time statistics, we skip the first 1000 units of spin-up time and focus on $t\geq1000$.
A tail event is defined as a rare event in the tail of the distribution in which the enstrophy exceeds the prescribed threshold $\mathbb{E}[\mathcal{E}]+k\operatorname{Std}(\mathcal{E})$. More precisely, each tail event corresponds to one connected time interval over which this threshold is exceeded, where the mean and standard deviation are those reported in Table~\ref{tab:moment_flow_scales_comparison}.
Table~\ref{tab:burst_count_thresholds} gives the counts for $k=1,2,3,4,5$, together with the corresponding thresholds and the absolute count error relative to the fixed-step computation. 
The maximum relative error is below $12\%$. Although these tail event counts should not be interpreted as high-precision estimates, the agreement is reasonable for rare-event diagnostics.

\begin{table}[htbp]
\centering
\scriptsize
\caption{Tail event counts for thresholds $\mathbb{E}[\mathcal{E}]+k\operatorname{Std}(\mathcal{E})$ after $t\geq1000$.}
\label{tab:burst_count_thresholds}
\begin{tabular}{@{}cccccc@{}}
\toprule
$k$ & Fixed-step tail event count & Adaptive-step tail event count & count error \\
\midrule
$1.0$  & $115$  & $119$ & $3.48\%$ \\
$2.0$  & $86$  & $88$ & $2.33\%$ \\
$3.0$  & $80$  & $81$ & $1.25\%$ \\
$4.0$  & $60$  & $67$ & $11.67\%$ \\
$5.0$  & $22$  & $21$ & $4.55\%$ \\
\bottomrule
\end{tabular}
\end{table}

The tail statistics are naturally more sensitive than the mean and variance, and the burst-count comparison should therefore be interpreted as a qualitative rare-event diagnostic rather than a high-precision estimate.

These numerical tests are not intended as a proof of convergence of invariant measures. Rather, they assess whether the adaptive long-time stable scheme reproduces commonly used statistical diagnostics of a reliable fixed-step computation at substantially reduced computational cost.
}

\section{Conclusion}\label{sec:6}

We have developed an efficient, unconditionally long-time stable, variable-step, second-order exponential time-differencing (ETD) scheme for the incompressible Navier–Stokes equations in periodic domains, together with two small variants. The proposed methods are long-time stable in the sense that the numerical solutions admit uniform-in-time bounds in the space $L^\infty(0,\infty;L^2)$ under general bounded external forcing, independent of the time-step size and the viscosity parameter.
This stability property is a necessary foundation for reliable long-time computation and for the numerical investigation of statistical quantities and asymptotic regimes.

Our approach integrates two recently developed generalizations of the scalar auxiliary variable approach, a mean-reverting scalar auxiliary variable (mr-SAV) formulation and a concurrent-correction SAV (ccSAV), together with multistep ETD techniques. This combination plays a central role in ensuring unconditional long-time stability while maintaining second-order temporal accuracy under variable step sizes. The resulting scheme treats the nonlinear advection term explicitly and remains solvable for arbitrary parameter values.

From a computational perspective, the proposed method is highly efficient, requiring only two heat solves and the solution of a single scalar cubic algebraic equation at each time step. An embedded adaptive time-stepping strategy, based on a first-order companion scheme, has also been developed to automatically adjust the time-step size in response to the evolving dynamics, thereby offering a way to improve efficiency without sacrificing stability.

The numerical experiments presented in this work confirm the theoretical analysis. In particular, they demonstrate the expected second-order temporal convergence, unconditional long-time stability under variable time stepping, enhanced stability when compared to the classical ETD-MS2 as well as the ETD-ccSAV-MS2 (without mean reversion), and the ability to capture long-time statistics in our Kolmogorov forcing test problem. Moreover, the adaptive strategy effectively balances accuracy and efficiency, leading to computational savings in our test case with moderate Reynolds number.

While the present results are encouraging, several important directions merit further investigation. A rigorous convergence theory and invariant-measure convergence analysis for the variable-step scheme remain important directions for future work.

The unconditional long-time stability and statistical consistency of the proposed ETD-mr-ccSAV framework also make it well suited for integration with modern data-driven and machine learning methodologies in fluid dynamics. 
In particular, the ability of the proposed method to efficiently resolve intermittent and extreme events could potentially be used for the systematic investigation of rare-event dynamics in turbulent and transitional flows. 

These directions suggest that structure-preserving ETD-mr-ccSAV methods may serve as a foundation for hybrid physics–machine learning frameworks aimed at long-time forecasting, statistical inference, and data-driven discovery in complex fluid systems.

\section*{Acknowledgments}
This work is supported in part by NSFC12271237. The second author acknowledges helpful conversations with Professors Wenbin Chen and Qiang Du. 

\section*{Declarations}
The authors declare that they have no known competing financial interests or personal relationships that could have appeared to influence the work reported in this paper.

\ignore{
\appendix

\section{Consistency of ETD-mr-ccSAV-MS2 scheme}\label{consis_mrSAV-ms2}

To avoid discussions regarding the regularity of the NSE, we analyze the consistency of the ETD-mr-ccSAV-MS2 scheme by considering a simplified system
\begin{equation}\label{eqn:sim_sys}
    \bm u_t + L\bm u + N(\bm u)  = F(t),
\end{equation}
under the assumptions that the solution to the system is sufficiently smooth, $L$ is a linear, self-adjoint operator, the Jacobian matrix of the nonlinear term $N(\bm u)$ is uniformly bounded in the solution space and satisfies $\langle N(\bm u), \bm u \rangle = 0$. The external force term $F$ is Lipschitz continuous and twice continuously differentiable with respect to time. 

The subsequent discussion is based on a uniform time step $\tau$. By virtue of the variation of constant formula, the exact solution to the system \eqref{eqn:sim_sys} can be expressed as:
\begin{equation}\label{eqn:exact_solution}
  u(t_{n+1}) = e^{-\tau L} u(t_{n}) + \int_{0}^{\tau} e^{-\nu (\tau - s) \mathcal{L}} (F(t_n + s) - N(u(t_n + s))) ds
\end{equation}

On the other hand, for $t\in (t_n,t_{n+1}]$, the mean reverting ccSAV approximation system corresponding to \eqref{eqn:2nd_msSAV_approx} is given by
\begin{equation}
    \left\{
    \begin{aligned}
        &u_t + L u + (1 - (r^{n+1})^2) N(\tilde{u}^{n+\frac{1}{2}}) = F^{n+\frac{1}{2}},\\
        &r_t + \gamma r = (1 - r^{n+1}) (\varphi_1(\tau \gamma))^{-1} \langle \varphi_1(\tau L) N(\tilde{u}^{n+\frac{1}{2}}), u^{n+1} \rangle.
    \end{aligned}
    \right.
\end{equation}
and the solution satisfies 
\begin{subequations}
\begin{align}
  &\bm u^{n+1} = e^{-\tau L} \bm u^{n} + \int_{0}^{\tau} e^{-\nu (\tau - s) \mathcal{L}} (F^{n+\frac{1}{2}} -  (1 - (r^{n+1})^2) N(\tilde{\bm u}^{n+\frac{1}{2}}) ds,\label{eqn:gsav_solution_u}\\
  &r^{n+1} = e^{-\tau \gamma} r^{n} +  (1 - r^{n+1})\int_0^{\tau} e^{-\gamma(\tau - s)} (\varphi_1(\tau \gamma))^{-1}  \langle \varphi_1(\tau L) N(\tilde{\bm u}^{n+\frac{1}{2}}), \bm u^{n+1} \rangle ds.\label{eqn:gsav_solution_r}
\end{align}
\end{subequations}

To derive a rigorous estimate for $r^{n+1}$, we substitute the exact solution $u(t)$, $r(t)$ in to \eqref{eqn:gsav_solution_r}. By utilizing the skew-symmetric property of the nonlinear term (i.e. $\langle N(\bm u), \bm u \rangle = 0$), we then have
\begin{equation}\label{r_simplified}
\begin{aligned}
    r(t_{n+1}) =& e^{-\tau \gamma} r(t_n) + \int_0^{\tau}  e^{-\gamma(\tau - s)}(r(s) - r(t_{n+1}))  \langle N(\bm u(s), \bm u(s) \rangle ds \\
    & + \tau (1 - r(t_{n+1})) \langle (\varphi_1(\tau \nu \mathcal{L}) - 1) N (\tilde{\bm u}^{n+\frac{1}{2}}), \tilde{\bm u}^{n+\frac{1}{2}} \rangle \\
    &+ \tau (1 - r(t_{n+1})) \langle \varphi_1(\tau \nu \mathcal{L}) N(\tilde{\bm u}^{n+\frac{1}{2}}), \bm u(t_{n+1}) - \tilde{\bm u}^{n+\frac{1}{2}} \rangle\Big)\\
    =& e^{-\tau \gamma} r(t_{n}) + \tau (1 - r(t_{n+1})) R_r^{n} .
\end{aligned}
\end{equation}
Here, $ R_r^{n} $ denotes the residual term of $r$, which satisfies
\begin{equation}
\begin{aligned}
    R_r^n =  \langle (\varphi_1(\tau \nu L) - 1) N (\tilde{\bm u}^{n+\frac{1}{2}}), \tilde{\bm u}^{n+\frac{1}{2}} \rangle +  \langle \varphi_1(\tau \nu \mathcal{L}) N(\tilde{\bm u}^{n+\frac{1}{2}}), \bm u(t_{n+1}) - \tilde{\bm u}^{n+\frac{1}{2}} \rangle\\
    = \tau \left(\langle \varphi_1'(\xi_1^n \nu L) N (\tilde{\bm u}^{n+\frac{1}{2}}), \tilde{\bm u}^{n+\frac{1}{2}} \rangle + \langle \varphi_1(\tau \nu \mathcal{L}) N(\tilde{\bm u}^{n+\frac{1}{2}}), \bm u(t_{n} + \xi_2^n) \rangle \right),
\end{aligned}
\end{equation}
where $ \xi_1^n, \xi_2^n \in (0,\tau) $. By the sufficient smoothness assumption of the solution $ u(t) $ and the boundedness of $ \varphi_1(z) $ and its derivative $ \varphi_1'(z)$, there exists a positive constant $ C' $ depending only on $\bm u$ , $\nu $, and $L $ such that $ |R_r^n| \leq C' \tau $.

Taking the absolute value of both sides of \eqref{r_simplified}, we derive the following recursive estimate
\begin{equation}\label{eqn:p_estimate}
\begin{aligned}
    |r^{n+1}| \leq & (1 + C' \tau^{2})^{-1}( e^{-\tau \gamma} |r^{n}| + C' \tau^2) \\
     \leq & e^{-\tau \gamma} |r^n| +  C' \tau^2 \\
     \leq & e^{-\gamma n \tau}|r^1| + C' \sum_{k=2}^{n+1} e^{-\gamma (n-k + 1) \tau} \tau^2
\end{aligned}
\end{equation}
By letting the initial condition $r^1=0$, the first term vanishes, and we obtain
\begin{equation}
    |r^{n+1}| \leq C'\nu \sum_{k=2}^{n+1} e^{-\gamma (n-k +1)\tau} \tau^2 \leq   \frac{C' \tau}{\gamma} = \mathcal{O}(\tau).
\end{equation}
This indicates that the auxiliary variable $r^{n+1}$ is of the order of $\tau$.

Finally, we quantify the truncation error $\bm u$ by subtracting \eqref{eqn:gsav_solution_u} from \eqref{eqn:exact_solution}. The resulting residual \( R_{\bm u}^n \) is given by
\ignore{
\begin{equation}
\begin{aligned}
    R_{\bm u}^n =& \int_{0}^{\tau} e^{-\nu (\tau - s) L} ( F(t_n + s) - F(t_{n+\frac{1}{2}}) ds - \int_{0}^{\tau} e^{-\nu (\tau - s) L} ( N( u(t_n + s)) - N(\tilde{\bm u}^{n+\frac{1}{2}})) ds \\
 &+ (r^{n+1})^2 \int_{0}^{\tau} e^{-\nu (\tau - s)}  N(\tilde{u}^{n+\frac{1}{2}}) ds \\ 
    =& \int_{0}^{\tau} e^{-\nu (\tau - s) L}  \left(F'(t_{n + \frac{1}{2}}) \left(s -\frac{\tau }{2}\right) + \frac{1}{2} F''(t_{n+\frac{1}{2}} + \xi_3^n) \left(s - \frac{\tau}{2}\right)^2\right) ds \\
    &+ \int_{0}^{\tau} e^{-\nu (\tau - s) L} N'(u(t_n + \xi_4^n)) (u(t_n + s ) - \tilde{u}^{n+\frac{1}{2}} ) ds+ (r^{n+1})^2 \int_{0}^{\tau} e^{-\nu (\tau - s)}  N(\tilde{u}^{n+\frac{1}{2}}) ds \\ 
  \leq & C''\tau^3
\end{aligned}
\end{equation}
By virtue of the smoothness of $ F(t) $ and  $u(t)$, along with the first-order quantities of $ r^{n+1} $, we can derive the estimate $ \|R_{\bm u}^n\| \leq \mathcal{O}(\tau^3) $. This residual estimate implies the second-order convergence of the numerical solution $u^{n+1}$.}
\begin{equation}\label{eq:error}
\begin{aligned}
R_{\bm u}^n &= \int_0^\tau e^{-\nu(\tau-s)L} \big( F(t_n+s) - F^{n+\frac12} \big) ds - \int_0^\tau e^{-\nu(\tau-s)L} \big( N(\bm u(t_n+s)) - N(\tilde{\bm u}^{n+\frac12}) \big) ds \\
&+ (r^{n+1})^2 \int_0^\tau e^{-\nu(\tau-s)L} N(\tilde{\bm u}^{n+\frac12}) ds.
\end{aligned}
\end{equation}

To analyze the three terms in \eqref{eq:error}, we first recall a standard estimate for the midpoint quadrature. For smooth functions $h(s)$ and $g(s)$ on $[0,\tau]$,
\begin{equation}\label{eq:midpoint_approx}
\begin{aligned}
\int_0^\tau h(s)g(s)\,ds - \int_0^{\tau} h(s)g\!\Bigl(\frac{\tau}{2}\Bigr) ds 
&= \int_0^{\tau} h(s)\bigl(g(s)-g(\tfrac{\tau}{2})\bigr) ds \\
&= \int_0^{\tau} h(s)g'\!\Bigl(\frac{\tau}{2}\Bigr)\Bigl(s-\frac{\tau}{2}\Bigr) ds \;+\; \mathcal{O}(\tau^3) \\
&= \mathcal{O}(\tau^3).
\end{aligned}
\end{equation}

We now rewrite \eqref{eq:error} in a form that allows direct application of \eqref{eq:midpoint_approx}. Define
\[
h(s)=e^{-\nu(\tau-s)L}, \qquad 
g(s)=F(t_n+s)-N(\bm u(t_n+s)),
\]
and let
\[
g^{\frac12}=F^{n+\frac12}-N(\tilde{\bm u}^{n+\frac12}),\qquad 
g(\tfrac{\tau}{2})=F(t_n+\tfrac{\tau}{2})-N(\bm u(t_n+\tfrac{\tau}{2})).
\]
Then
\begin{equation*}
\begin{aligned}
R_{\bm u}^n 
&= \int_0^\tau h(s)\bigl(g(s)-g(\tfrac{\tau}{2})\bigr)ds 
   + \int_0^\tau h(s)\bigl(g(\tfrac{\tau}{2})-g^{\frac12}\bigr)ds \\
&\quad + (r^{n+1})^2 \int_0^\tau h(s) N(\tilde{\bm u}^{n+\frac12}) \, ds .
\end{aligned}
\end{equation*}

The first integral coincides with the left-hand side of \eqref{eq:midpoint_approx}, hence it is of order $\mathcal{O}(\tau^3)$.
For the second integral, note that by construction $F^{n+\frac12}=F(t_n+\tau/2)$, so
\[
g(\tfrac{\tau}{2})-g^{\frac12}=N(\tilde{\bm u}^{n+\frac12})-N(\bm u(t_n+\tfrac{\tau}{2})).
\]
The midpoint approximation $\tilde{\bm u}^{n+\frac12}$ is second-order accurate, therefore
\[
\| g(\tfrac{\tau}{2})-g^{\frac12} \|=\mathcal{O}(\tau^2).
\]
Since $\|h(s)\|\le 1$ uniformly, integrating over $[0,\tau]$ gives
\[
\Bigl\| \int_0^\tau h(s)\bigl(g(\tfrac{\tau}{2})-g^{\frac12}\bigr)ds \Bigr\| = \mathcal{O}(\tau^3).
\]

For the third term, the previous analysis show that the scaling factor satisfies $|r^{n+1}|=\mathcal{O}(\tau)$, hence $(r^{n+1})^2=\mathcal{O}(\tau^2)$, so
\[
\Bigl\| (r^{n+1})^2\int_0^\tau h(s)N(\tilde{\bm u}^{n+\frac12})ds \Bigr\| = \mathcal{O}(\tau^3).
\]

Combining the three estimates we obtain
\[
\|\bm R_u^n\| = \mathcal{O}(\tau^3),
\]
which establishes a third-order local truncation error. Consequently, the numerical solution $\bm u^{n+1}$ converges with second-order accuracy.
}}}

\bibliographystyle{elsarticle-num} 
\bibliography{references}

\end{document}

%% file: paper_shared.tex
\usepackage{bm}
\usepackage{color}
\usepackage{amsfonts}
\usepackage{amsmath}
\usepackage{amssymb}
\usepackage{graphicx}
\usepackage{subfigure}
\usepackage{listings} 
\usepackage{multirow}
\usepackage{enumitem}
\usepackage{indentfirst}
\usepackage{algorithm}
\usepackage{algpseudocode}
\usepackage{booktabs}
\usepackage{appendix}
\usepackage{comment}

\newsiamremark{rem}{Remark}
\newsiamremark{exm}{Example}

\numberwithin{equation}{section}

\newcommand{\ignore}[1]{}

\title{Unconditionally Long-Time Stable Variable-Step Second-Order Exponential Time-Differencing Schemes for the 2D Periodic Incompressible NSE\thanks{Submitted to the editors DATE.}}

\author{
  Haifeng Wang\thanks{School of Mathematical Sciences, Eastern Institute of Technology, Ningbo, Zhejiang 315200, China (\email{hfwang@eitech.edu.cn}).}
  \and
  Xiaoming Wang\thanks{School of Mathematical Sciences, Eastern Institute of Technology, Ningbo, Zhejiang 315200, China (\email{wxm@eitech.edu.cn}). Corresponding author.}
  \and
  Min Zhang\thanks{School of Mathematical Sciences, Shanghai Jiao Tong University, Shanghai 200240, China, and School of Mathematical Sciences, Eastern Institute of Technology, Ningbo, Zhejiang 315200, China (\email{zhangmmm@sjtu.edu.cn}).}
}